\documentclass[12pt]{article}
\makeatletter
\let\@fnsymbol\@arabic
\makeatother
\usepackage{comment}
\usepackage{geometry}
\usepackage{times} 
\usepackage{booktabs}
\usepackage{tabularx}
\usepackage{threeparttable}
\usepackage{array}
\usepackage{url}
\usepackage{enumerate}
\usepackage{mathptmx}
\usepackage[T1]{fontenc}
\normalfont
\usepackage[final]{pdfpages}

\usepackage{setspace}

\usepackage{amsmath,amsthm,amssymb}

\usepackage{float}
\usepackage{graphicx}
\usepackage{subfig}
\usepackage{parskip}
\usepackage{natbib}
\usepackage{here,float}
\usepackage{color}
\usepackage[mathscr]{eucal}
 \usepackage{flafter}
\usepackage{titlesec}
\usepackage{titling}
\usepackage{bbm}
\theoremstyle{remark}
\newtheorem{remark}{Remark}
\theoremstyle{plain}
\newtheorem{prop}{Proposition}
\newtheorem{theo}{Theorem}
\newtheorem{exm}{Example}
\newtheorem{lemma}{Lemma}

\newtheorem{Ass}{Assumption}

\begin{titlepage}
\makeatletter 
\let\@fnsymbol\@arabic 
\makeatother

\title{
Near-Unit-Root Theory for Affine Processes
}
\author{Ga\"{e}l Anne\thanks{Department of Mathematics and Statistics, Concordia University, Montr\'eal, Canada. Email: gael.anne@concordia.ca} \, and Yang Lu\thanks{Department of Mathematics and Statistics, Concordia University, Montr\'eal, Canada. Email: yang.lu@concordia.ca} \, and Xuewen Yu\thanks{School of Management, Fudan University, Shanghai, China. Email: xuewenyu@fudan.edu.cn} \, and Xiaowen Zhou\thanks{Department of Mathematics and Statistics, Concordia University, Montr\'eal, Canada. Email: xiaowen.zhou@concordia.ca}}
\end{titlepage}
\begin{document}
\maketitle
\textbf{Abstract:} Discrete-time affine processes are widely used in finance and economics and encompass count, positive, and nonnegative-valued processes. This paper develops near-unit-root asymptotic theory for this class of models. Unlike linear AR(1) processes, affine processes exhibit time-varying conditional variance that remains asymptotically non-negligible near unity, leading to qualitatively different scaling limits and estimator behavior. We show that the local-to-unity regime suffers from the usual nuisance-parameter problem, whereas the mildly explosive regime, while free of it, still does not allow consistent estimation of the intercept. By contrast, the mildly stationary framework is more tractable: the OLS estimator is asymptotically normal, the resulting trajectories are more realistic than those of linear AR(1) models, and inference is possible through both a plug-in method or bootstrap. The theoretical results are supported by simulation evidence and illustrated through applications to insurance and financial data. 

%This paper studies the probabilistic and statistical properties for these processes, under either the local-to-unity, or the mildly integrated framework. The large sample theories we obtain are significantly different from the usual unit root theory for linear autoregressive processes. While the local-to-unity model often suffers from a nuisance parameter problem, the mildly integrated model exhibits better properties, including pivotal large sample distribution for the Ordinary Least Squares estimator, more realistic probabilistic properties compared to linear AR(1) models, and a valid bootstrap procedure. 
%for the local-to-unity case, only the OLS estimator of the autoregressive coefficient is consistent, but not the estimator of the intercept, and a random weighting bootstrap is also invalid, whereas in the mildly integrated case, both the autoregression coefficient and the intercept are consistent, and the bootstrap is also valid. 
%We also investigate alternative estimators such as Weighted Least Squares and Maximum Likelihood. The results are validated using simulations and applied to annual frequency of natural disasters, Bank of Canada rates, and the VIX index.

   \textbf{Acknowledgments:} Lu thanks  Universit\'e Laval/Autorit\'e des march\'es financiers (fonds AMF-GIRIF), NSERC (through grant RGPIN-2021-04144), FRQNT (STRATEGIA grant) and SSHRC (grant 430-2024-01325) and for financial support.  

   \textbf{Keywords:} Mildly stationary, affine process, local-to-unity, random-weighted bootstrap.
\section{Introduction}

In several emerging applications, there is a need to model count, positive, or nonnegative-valued processes that may be close to unit root. For example, the annual number of disasters (count valued) may  be increasing due to climate change. Similarly, both long-run and short-run interest rates can exhibit strong persistence \citep{gourieroux2022long, gourieroux2022ultra, gourieroux2025long}.\footnote{
Depending on the application, the short-run interest rate may be positive or nonnegative if the zero lower bound is attainable. In some cases, it may even be count-valued. For instance, the Federal Reserve's policy rate (target range) is typically set in multiples of a minimum tick (25 basis points). In contrast, market-based interest rates, such as the effective federal funds rate, are usually continuous and positive-valued.}

%of winter storms in Canada (count valued), respectively. 
%\begin{figure}[H]
%    \centering
%    \includegraphics[width=0.7\linewidth]{FED.png}
%    \caption{Federal effective fund rates in the US between 1955 and 2025 (Source: Federal Reserve Economic Dataset). }
%\end{figure}

Discrete-time affine processes, also known as compound autoregressive processes \citep{darolles2006structural}, are Markov processes whose conditional Laplace transform (LT) is exponential-affine in the previous state variable. This class encompasses count, positive, and nonnegative-valued processes, and has been widely used in finance and economics. Prominent count-valued examples include the INARCH (or Poisson autoregressive) model \citep{agosto2016modeling} and the negative binomial autoregressive model \citep{gourieroux2017nbar, gourieroux2023noncausal}. A prime example of a positive affine process is the autoregressive gamma process, which arises as the exact discretization of the continuous-time Cox-Ingersoll-Ross process. This property makes it particularly suitable for financial applications such as credit risk, term structure modeling, and stochastic volatility \citep{overbeck1997estimation, gourieroux2006affine, le2010discrete, drea2017, gourieroux2023noncausal, mendes2025gamma}. 
More recently, \cite{monfort2016staying} introduced a nonnegative affine process that allows for a positive probability of reaching the zero lower bound (ZLB). %, a stylized fact frequently observed for interest rate set by central banks over the last few decades. 

Despite these developments, little attention has been paid to near-unit-root affine processes. However, such a framework could be quite useful from a macroprudential perspective. For instance, there is  a recent surge of interest in models capable of capturing long-run uncertainty in long-run interest rates \citep{gourieroux2022long,gourieroux2025long}, with near-unit-root linear models often proposed as candidates. However, these models are generally incompatible with positivity or nonnegative constraints, which are important in many regulatory contexts. %Such near-unit-root models are also very important for prediction purposes, both for the insurance industry in particular, and for the economy in general, in order to better evaluate the costs and benefits of the green transition. 
Such near-unit-root models are also relevant for long-horizon prediction: persistent changes in disaster frequencies and losses matter directly for insurance pricing and, more broadly, for evaluating the costs and benefits of the green transition.

%For instance, the annual count of natural disasters is count valued, the annual average loss of positively valued; the long-run interest rate is positively valued; depending on the application, the short-run interest rate could be either positive valued, or count valued. For instance, the policy rate of many central banks are usually set at a multiple of a minimal tick, such as 25 basis points. 

In this paper, we address this gap by developing a near-unit-root asymptotic theory for a class of Markov affine processes taking count, positive or nonnegative values. We show that these processes exhibit unit root behavior that differs substantially from that of the linear AR(1) model. We begin by analyzing the local-to-unity affine model, in which the autoregressive coefficient is $\alpha_n=1+\frac{\gamma_n}{n}$. The paper that is closest to this framework is \cite{barreto-souza2023}, who study a local-to-unity INARCH model, under the strong assumption that the intercept parameter is fixed \textit{a priori}. We extend their analysis to a class of Markov affine processes taking count, positive, or nonnegative values, including INARCH as a special case, and allow the intercept to be estimated jointly with the autoregressive parameter.  This extension reveals a fundamental nuisance parameter issue of the local-to-unity model: while the autoregressive coefficient remains estimable at rate $n$, neither the intercept parameter of the conditional expectation equation nor the localization parameter $\gamma_n$ is  consistently estimable, and the limiting distribution of the OLS estimator depends on these parameters, which makes inference difficult. Our framework is also distinct from two adjacent near-unit-root
literatures: the linear AR(1) \citep{phillips2007limit, fei2018limit,
yu2025inference} and the INAR(1) \citep{ispany2003asymptotic,
peng2024note}. In both of these existing literatures,  the conditional variance is bounded in expectation. In our setting, the conditional variance grows with the
state and is therefore unbounded, producing qualitatively different
asymptotics.

We then extend the unit root analysis to mildly integrated affine processes, which bridge the gap between the stationary and local-to-unity regimes. The main finding of the paper is that mildly integrated affine processes, particularly its mildly stationary version with $\alpha_n=1-\frac{1}{k_n}$, is more convenient for inference than the local-to-unity model for several reasons. % consistent with similar findings for the linear AR(1) \citep{phillips2023estimation}. 
%Mildly integrated affine processes
First, we show that they avoid the nuisance-parameter problem and we develop a block local-to-unity interpretation for mildly stationary processes to explain this property. Indeed, such a process behaves like $\lfloor n/k_n \rfloor \to \infty$ consecutive local-to-unity blocks, and although each block inherits the non-Gaussian, non-pivotal limit, averaging across the diverging number of asymptotically stationary blocks restores normality and solves the nuisance parameter problem. Second, unlike both local-to-unity and mildly explosive regimes, in which the OLS estimator of the intercept is inconsistent, the mildly stationary model admits feasible inference: the OLS estimator is consistent and jointly asymptotically normal, valid inference can be conducted without prior knowledge of $k_n$, and a
randomly weighted bootstrap is asymptotically valid. Third, it generates more plausible dynamics than the mildly integrated AR(1) models with intercept, as the affine structure preserves nonnegativity and supports richer stochastic variation after rescaling. Fourth, interestingly, this framework can also generate bubble-like paths, without producing a globally explosive pattern associated with the mildly explosive regime, a pattern not suitable for several types of data, such as interest rates. 
%The standard tests of bubbles \citep{phillips2011dating, phillips2015testing} fit local autoregressive models on moving windows. We show in the paper that a mildly stationary affine DGP can naturally produce locally explosive OLS estimates that are nonetheless fully compatible with global mean reversion.

%enjoy similar advantages compared to their local-to-unity alternatives. We also emphasize that these mildly integrated models have the block local-to-unity interpretation, relating their properties to their respective local-to-unity counterparts. Furthermore, we show that when compared to the mildly integrated AR(1) model with intercept, the proposed affine framework leads to more plausible stochastic properties of the process.
%Specifically, \begin{itemize}    \item in the mildly stationary case,  the AR(1) model converges to a deterministic constant after appropriate scaling, a feature that is rarely observed in practice. In contrast, the affine process converges, after appropriate rescaling, to a stationary diffusion.    \item in the mildly explosive case, both models exhibit an exponential trend.  However, the AR(1) process may display either positive or negative explosions. By contrast, the affine model preserves the positivity or nonnegativity of the process. \end{itemize}
 
The paper is organized as follows. Section~\ref{sec:2setting} introduces the affine process within the unit root framework. Section~\ref{sec:3localtounity} shows that the local-to-unity affine model converges to a CIR diffusion and inherits the nuisance-parameter problem. Section~\ref{sec:4probmild} studies the properties of mildly integrated affine processes, and Section~\ref{sec:5inferencemild} focuses on inference of such processes. %To facilitate comparison with the existing unit root literature, this paper mainly focuses on the ordinary least squares (OLS) estimator. 
Section~\ref{sec:6simulations} presents simulation results. Section~\ref{sec:7empirical} proposes applications to three economic datasets. Section~\ref{sec:conclusion} concludes. All proofs are gathered in Appendix A. We also include a short benchmark comparison with the INAR(1) analysis of \citet{peng2024note} using their crime data in Appendix B.

\section{The setting}
\label{sec:2setting}
In this section, we briefly review the class of Markov affine processes and introduce the local-to-unity and mildly integrated assumptions used throughout the paper.
\subsection{Affine processes}
%For a process of count, nonnegative, or positive values, its dynamics are characterized by the conditional LT. It is affine if and only if the LT is exponential-affine in the conditioning variable:
Let $(X_t)$ be a Markov chain taking values in the nonnegative integers, the nonnegative reals, or the positive reals. Its conditional distribution is fully characterized by the conditional Laplace transform (LT), and $(X_t)$ is said to be \emph{affine} when this conditional LT is exponential-affine in the conditioning variable:
\begin{equation}
\label{car}
\mathbb{E}[e^{-u X_t} \mid X_{t-1}]=e^{-a(u)X_{t-1}-b(u)}, \qquad \forall u \geq 0,
\end{equation}
where $e^{-a(u)}$ and $e^{-b(u)}$ are LT's of probability distributions %with the same support as $(X_t)$,
such that the conditional LT is well defined. More precisely, %Depending on the support of process $(X_t)$, they might also satisfy:
\begin{itemize}
%of an infinitely divisible distribution, and another Laplace transform. 
\item if $(X_t)$ is count-valued, then  $e^{-a(u)}$ and $e^{-b(u)}$ are the LT of count distributions, and equation \eqref{car} implies the following stochastic representation:
\begin{equation}
\label{branching}
X_t= \sum_{k=1}^{X_{t-1}} Y_{k,t} +\epsilon_t
\end{equation}
where the sequences $(Y_{k,t})_{k,t \geq 1}$ and $(\epsilon_t)_{t \geq 1}$ are mutually independent i.i.d. sequences with LT $\mathbb{E}[e^{-u Y_{k,t}}]=e^{-a(u)}$ and $\mathbb{E}[e^{-u \epsilon_{t}}]=e^{-b(u)}$, respectively \citep{lu2021predictive}. %In this case, $(X_t)$ is also referred to as a branching process with immigration, or Galton-Watson process with immigration \citep{wei1987unified}, and admits a tractable predictive distribution \citep{lu2021predictive}. 
\item if $(X_t)$ is positive (resp. nonnegative-valued), then the LT $e^{-a(u)}$ must be infinitely divisible in order for \eqref{car} to be well-defined for all values of $X_{t-1}$, whereas $e^{-b(u)}$ may be the LT of any positive (resp. nonnegative) variable.
\end{itemize}

Throughout the paper, we impose the following regularity condition:
\begin{Ass}
\label{ass1}
The first two derivatives of the LTs $e^{-a(u)}$ and $e^{-b(u)}$ exist and are finite at $u=0$.
\end{Ass}
%Because $e^{-a(u)}$ and $e^{-b(u)}$ are LTs,
This assumption ensures that the associated conditional distributions have finite means and variances. Indeed, by successive differentiation of eq.\eqref{car} at $u=0$, we get:
\begin{align}
\label{meancond}
    \mathbb{E}[X_t \mid X_{t-1}]&=a'(0) X_{t-1}+b'(0) ,\\
     \mathbb{V}[X_t \mid X_{t-1}]&=-a''(0) X_{t-1}-b''(0).
     \label{varcond}
\end{align}
%More generally, affine processes are such that all the conditional cumulants, whenever finite, are also affine in the conditioning variable:\begin{align}   \label{3rd}     \kappa_3[X_t \mid X_{t-1}]&=\mathbb{E}\Big[(X_t-\mathbb{E}[X_t \mid X_{t-1}])^3 \vert X_{t-1}\Big]=a^{(3)}(0) X_{t-1}+b^{(3)}(0) ,\\         \kappa_4[X_t \mid X_{t-1}]&=\mathbb{E}\Big[(X_t-\mathbb{E}[X_t \mid X_{t-1}])^4 \vert X_{t-1}\Big]-3 \mathbb{V}[X_t \mid X_{t-1}]=-a^{(4)}(0) X_{t-1}-b^{(4)}(0).\end{align}

%by successively differentiating eq.\eqref{car} once and twice at $u=0$. 
%These results have a simple interpretation in the case of the count-valued CaR process, that is, when  eq.\eqref{branching} holds. In this case, these derivatives of $a$ and $b$ are simply:$$a'(0)=\mathbb{E}[Y_{k,t}], \quad b'(0)=\mathbb{E}[\epsilon_t], \quad -a''(0)=\mathbb{V}[Y_{k,t}], \quad -b''(0)=\mathbb{V}[\epsilon_t].$$

\subsection{Unit root affine processes}
So far, the affine process has been assumed to be time-homogeneous. To accommodate the behavior near the unit root, we now allow functions $a$ and $b$ to depend on the sample size $n$ %which is often, but not necessarily interpreted as sample size $n$ at our disposal. 
as follows:
%In this paper, we assume that the coefficients in front of $X_{t-1}$ in both eqs \eqref{meancond} and \eqref{varcond} depend on the sample size $n$:
\begin{align}
\mathbb{E}[X_t \mid X_{t-1}]&= \alpha_n X_{t-1} +\mu_n, &t=1,\dots,n, \label{eq1}\\
\mathbb{V}[X_t \mid X_{t-1}]&= \beta_n X_{t-1} +\delta_n, &t=1,\dots,n, \label{eq2}
\end{align}
where the coefficients $\alpha_n$, $\mu_n$, $\beta_n$, $\delta_n$ are positive and satisfy the following: %Furthermore, we assume that the sequence $(\beta_n), (\mu_n), (\delta_n)$ are such that: 
\begin{Ass}
\label{Assbeta}
   The sequences $(\beta_n)$, $(\mu_n)$, and $(\delta_n)$ converge, as $n$ increases to infinity, to positive constants $\sigma^2$, $\mu$, and $\delta$, respectively. 
\end{Ass}
%Note that we denote by $\sigma^2$ the limit of $\beta_n$, instead of just $b$, as it will become clear in the next section that $\sigma$ will have the interpretation of volatility. 

We consider both the local-to-unity and mildly integrated regimes for $\alpha_n$: %In the local-to-unity case, parameter $\beta_n$ satisfies: %Assumption 1 below, and $\alpha_n$ satisfying one of Assumptions 2 and 3 below. 

\begin{Ass}[Local-to-unity, or nearly unstable case]
\label{assumptionnearlyunstable}
\begin{equation}
\label{nearlyunstable}
\alpha_n=1+ \frac{\gamma_n}{n},
\end{equation}
where $\gamma_n$ converges to a real constant $\gamma$. If $\gamma<0$, the process $(X_t)$ is  locally stationary, whereas if $\gamma>0$, it is locally explosive. 
\end{Ass}

Alternatively, $\alpha_n$ may follow a mildly integrated specification, in which Assumption \ref{Assbeta} is maintained, but Assumption~\ref{assumptionnearlyunstable} is replaced by the following:
%in the mildly integrated case, we maintain Assumption \ref{Assbeta} and assume that $\alpha_n$ satisfies:
\begin{Ass}[Mildly integrated case]
\label{assumptionmildly}
\begin{equation}
\label{mildly}\alpha_n=1+\frac{\gamma}{k_n},
\end{equation}
where the constant $\gamma \neq 0$, and the sequence $k_n$ is positive and diverges to infinity at a rate slower than $n$. That is,
\begin{equation}
\label{conditiononkn}
k_n \rightarrow \infty, \qquad \frac{k_n}{n} \rightarrow 0.
\end{equation}
The process is mildly stationary when $\gamma<0$, and mildly explosive when $\gamma>0$.
\end{Ass}
\subsection{Examples}
We now present three canonical examples. 
\begin{exm}[INARCH process, or Poisson autoregression]
If the conditional distribution of $X_t$ given $X_{t-1}$ is Poisson,
\begin{equation}
 	\label{inarch}
 X_t \mid X_{t-1} \sim \mathcal{P}ois(\alpha_n X_{t-1}+\mu), \qquad t=1,2,\dots,n,  
\end{equation}
then $a_n(u)=\alpha_n (1-e^{-u})$ and $b(u)= \mu (1-e^{-u})$. Equations \eqref{eq1}-\eqref{eq2} are satisfied with $\beta_n=\alpha_n \rightarrow 1,$ so that $\sigma^2=1$, and $\delta=\mu$. 
\end{exm}

\begin{exm}[ARG and NBAR processes]
%Other examples of affine processes include t
The count-valued negative binomial autoregressive (NBAR) process \citep{gourieroux2017nbar} and the positive-valued autoregressive gamma (ARG) process \citep{pitt2005constructing, gourieroux2006autoregressive} are both based on the Poisson-gamma conjugacy and can alternately be defined using the following Markov chain:
%Let us define processes $(X_t)$ and $(Z_t)$ through
\begin{itemize}
\item given $X_t$, count variable $Z_t$ follows a Poisson distribution with parameter $\theta_n X_t$,
\item given $Z_t$, positive variable $X_{t+1}$ follows a gamma distribution with shape parameter $\kappa+Z_t$ and scale parameter $c$,
\end{itemize}
where parameters $c$, $\theta_n$, and $\kappa$ are positive.

In this construction, the process $(X_t)$ is an ARG process and satisfies \citep[eq.(2.4)]{gourieroux2017nbar}:
$$
\mathbb{E}\left[e^{-u X_t}| X_{t-1}\right]= \frac{1}{(1+cu)^\kappa}e^{-\frac{\theta_n c u}{1+cu}X_{t-1}}, \qquad \forall u>0,
$$
and
\begin{align*}
\mathbb{E}[X_{t} | X_{t-1}]& = \alpha_n X_{t-1} + c\kappa, \\
\mathbb{V}[X_{t} | X_{t-1}]&= 2 \alpha_n c X_{t-1} +c^2 \kappa,
\end{align*}
where $\alpha_n=\theta_n c$. Thus, $(X_t)$ satisfies \eqref{eq1}-\eqref{eq2} with $\sigma^2=2c$, $\mu=c\kappa$, and $\delta=c^2\kappa$. The conditional distribution of $X_t$ given $X_{t-1}$ is noncentral gamma, and is therefore strictly positive. 

%The ARG process is the exact time discretization of a CIR process \citep{gourieroux2006autoregressive}, and is widely used in finance \cite{gourieroux2006affine, le2010discrete, drea2017}.
 
On the other hand, the process $(Z_t)$ is an NBAR process, with conditional LT \citep[eq.(2.3)]{gourieroux2017nbar}:
$$
\mathbb{E}\left[e^{-u Z_t}| Z_{t-1}\right]= \frac{1}{[1+\alpha_n(1-e^{-u})]^{\kappa+Z_{t-1}}}, \qquad \forall u>0,
$$
and
\begin{align*}
\mathbb{E}[Z_{t+1} | Z_{t}]& = \alpha_n (Z_t + \kappa),\\
\mathbb{V}[Z_{t+1} |Z_t]&= (1+ \alpha_n) ( \alpha_n Z_t + \alpha_n \kappa).
\end{align*}
As $\alpha_n \to 1$, $\beta_n \to 2$, so $\sigma^2=2$.  The conditional distribution of $Z_t$ given $Z_{t-1}$ is negative binomial. 
  \end{exm}

  \begin{exm}[ARG0 process]
  Over the past two decades, short-term interest rates in several economies (notably Japan, the Eurozone, and the US) have spent extended periods at or near the zero lower bound. To accommodate the resulting nonzero probability that the rate attains (or approaches) this bound, \cite{monfort2016staying} introduce the ARG0 process as a variant of the standard ARG model.
 % To account for the nonzero probability that the short-term interest rate attains (or approaches) the zero lower bound, a feature observed over the past two decades in several economies, including Japan, the Eurozone, and the US, \cite{monfort2016staying} introduce the ARG-zero process as a variant of the standard ARG model.  
  Given $X_{t-1}$, variable $Z_t$ follows a Poisson distribution with parameter $aX_{t-1} +b$, and conditional on $Z_t$, $X_t$ follows a gamma distribution with shape parameter $Z_t$ and scale parameter $\theta$. By convention, $X_t =0$ when $Z_t=0$. The resulting conditional distribution of $X_t$ given $X_{t-1}$ is a special case of the Tweedie family. 
We have:
  \begin{align*}
      \mathbb{E}[X_t \mid X_{t-1}] &= \theta (a X_{t-1}+b), \\
      \mathbb{V}[X_t \mid X_{t-1}] &= 2\theta^2 (a X_{t-1}+b).
  \end{align*}
  If $\theta$ remains constant while $a = a_n$ varies with $n$ such that $a_n \theta$ converges to 1, then $\sigma^2= 2\theta$.
  \end{exm}

  \subsection{Connection with other near-unit-root frameworks}
  %The motivations of introducing (near) unit root affine processes are twofold. First, as mentioned previously, affine processes are very tractable and  popular in finance and economics. Second,
  \paragraph{Affine models outside our framework.}
  The affine framework offers a natural extension of the existing unit root literature: the two classical models for which a unit root theory is well-established both belong to the affine family. However, neither satisfies Assumption~\ref{Assbeta}. More precisely: 
  \begin{itemize}
      \item the linear AR(1) model with intercept $X_t=\alpha_n X_{t-1}+\mu+\epsilon_t$, with i.i.d. innovations $\epsilon_t$, is affine with $a(u)$ given by $a(u)=u \alpha_n $, and %$b(u)=\log \mathbb{E}[e^{-u \epsilon_t}]$ in the case without intercept, or 
      $b(u)=\mu u-\log \mathbb{E}[e^{-u \epsilon_t}]$. Its limit theory has recently been developed by \cite{fei2018limit,liu2019asymptotic, guo2019testing, liu2023limit, yu2025inference}. In this case, $\beta_n=-a''(0)=0$. 
      \item the INAR(1) model is a special case of \eqref{branching}, obtained when $(Y_{k,t})$ is Bernoulli with parameter $\alpha_n$, in which case $a(u)=-\ln(\alpha_n e^{-u}+1-\alpha_n)$. Local-to-unity and mildly integrated INAR(1) models have been studied by \cite{ispany2003asymptotic} and \cite{peng2024note}, respectively, and exhibit convergence rates similar to those of the linear AR(1) model.\footnote{For example, in the local-to-unity case, both models have convergence rates of $n^{3/2}$ for $\hat{\alpha}_n$, and $n^{1/2}$ for $\hat{\mu}_n.$ } The conditional variance is  $\alpha_n(1-\alpha_n)X_{t-1}+\mathbb{V}[\epsilon_t]$, so $\beta_n=\alpha_n(1-\alpha_n) \to 0$ as $\alpha_n$ increases to 1. 
   \end{itemize}
Because $\beta_n$ converges to zero in both models, the conditional variance is bounded in expectation: it is constant in the linear AR(1) model. In the INAR(1) model, since $\mathbb{E}[X_{t-1}]=O(\frac{1}{1-\alpha_n})$, it follows that $\alpha_n(1-\alpha_n)X_{t-1}$ is bounded in expectation. 
      %Thus, for this model, the conditional variance is bounded in expectation.   
%  Thus both models correspond to the (disallowed) limiting case of Assumption 1 with $\beta_\infty=0$.   %In this paper, we shall see that for the affine processes that we consider, the large sample theory differs significantly from these two limiting cases. 
 % Note also that the INAR(1) model implies under-dispersion, that is, in this model, the conditional variance is smaller than the conditional expectation, at least in the case where $\epsilon_t$ is Poisson distributed. In practice, many count data are over-dispersed. For this reason, we also do not consider the INAR(1) model in our paper.  
By contrast, under Assumption \ref{Assbeta}, %have explosive conditional variance, %as well as the  mildly nonstationary AR(1) literature \citet{phillips2007limit, , yu2025inference}, mainly due to the fact  in the sense that
 the conditional variance %, which by \eqref{eq2} is 
 $\beta_n X_{t-1}+\delta_n$ has an unbounded expectation. %Indeed, it will be shown in this paper that in these models, $X_{t-1}$ is typically of order $n$ (under Assumption 2) or $k_n$ (under Assumption 3). 
 %As a comparison, for both INAR(1) and AR(1) models, the conditional variance is bounded in expectation. 
%This is to be compared with the linear AR(1) literature, which usually assumes the innovation process to be stationary. That is, $\beta_n=0$.  
%While this resembles a weak AR(1) model, 
%under Assumption 2 or 3, the sequence $(W_t)$ is usually not weakly stationary, whereas it is typically stationary in the aforementioned AR(1) models. 
%On the other hand, the literature on near unit root AR(1) processes usually assumes that the marginal variance of $e_t$ is bounded, whereas \eqref{eq2} indicates an unbounded marginal variance for process $(W_t)$. 
%It will be shown in this paper that 
In this paper, we show that this ``explosive'' feature  leads to fundamentally different asymptotic properties for the near-unit-root affine processes considered here, compared with the linear AR(1) and INAR(1) models. %Our framework also does not include local-to-unity, or mildly integrated INAR(1) model. 
\paragraph{Non-affine nonlinear near-unit-root models.}
Other nonlinear near-unit-root models pursue similar goals through
different mechanisms. The hybrid stochastic local unit root with drift
(STURWD) of \citet{lieberman2020hybrid, liu2023robust}
\begin{equation}
\label{hybrid}
Y_t=  \left(1+\frac{\gamma}{n}\right) Y_{t-1} +\mu+ \frac{u_t}{\sqrt{n}}Y_{t-1} +\epsilon_t,
\end{equation}
where $(u_t, \epsilon_t)$ is a stationary error sequence, has \emph{quadratic} (rather than affine) conditional variance,
\emph{observable} (rather than latent) shocks, and does not naturally
accommodate count-valued data.

On the other hand, the limited autoregressive
\citep{cavaliere200403} and dynamic Tobit \citep{bykhovskaya2024local}
processes enforce nonnegativity by truncating a linear process
at zero. It is well documented \citep{monfort2016staying} in the term structure literature that this approach, also referred to as the \emph{shadow rate}, can lead to reduced analytical tractability, for example, in pricing and risk prediction applications.\footnote{For instance, bond pricing formulas when the interest rate is the underlying process.} %and for risk prediction purposes.  

\subsection{Ordinary Least Squares estimator}
%As $n$ increases to infinity and $\alpha_n$ approaches 1, $\beta_n$ converges to zero. Thus Assumption 1 is not satisfied by the INAR(1) model. 
%For an affine process satisfying Assumptions 1, 2, and 3 (or 4), this paper studies its stochastic properties, with particular emphasis on its scaling limit and the asymptotic distribution of the estimators of parameters $\alpha_n$ and $\mu_n$ in the conditional expectation equation \eqref{eq1}.
For an affine process satisfying Assumptions~\ref{ass1}, \ref{Assbeta}, \ref{assumptionnearlyunstable} (or \ref{assumptionmildly}), we study the scaling limit of $(X_t)$ and the asymptotic distribution of the estimators of $\alpha_n$ and $\mu_n$ in \eqref{eq1}. We mainly focus on the ordinary least squares (OLS) estimator, given by:
\begin{equation}
\label{cls_eq}
(\hat{\alpha}_n, \hat{\mu}_n)':=\arg\min_{(\alpha, \mu)'} \sum_{t=1}^n (X_t-\alpha X_{t-1}-\mu)^2.
\end{equation}
Two considerations motivate this choice. First, it facilitates comparison with the existing unit root literature. Second, its expression is considerably simpler than that of alternatives such as the (quasi-)maximum likelihood estimator. Weighted least squares and maximum likelihood are discussed in Section~\ref{sec:5inferencemild}.
%The choice of OLS estimation is motivated by two considerations. First, it facilitates the comparison with the existing unit root literature. Second, its expression is considerably simpler than that of alternative estimators, such as the maximum likelihood estimator. A discussion of weighted least squares and maximum likelihood estimation is deferred to section 5. % We show that their properties are more nonstandard and are thus not necessarily superior to the OLS estimator. %We also explore the use of the bootstrap method to approximate the limiting distribution in finite samples.  
Table~\ref{tab:regime-summary} previews the main results of the next two sections.
\begin{table}[H]
\centering
\small
\setlength{\tabcolsep}{4pt}
\begin{threeparttable}
\caption{Summary of the three near-unit-root regimes and their inferential implications. The symbol $\Rightarrow$ denotes weak convergence of stochastic processes.}
\label{tab:regime-summary}
\begin{tabularx}{\textwidth}{
>{\raggedright\arraybackslash}p{2.2cm}
>{\raggedright\arraybackslash}p{2.8cm}
>{\raggedright\arraybackslash}p{3.1cm}
>{\raggedright\arraybackslash}p{1.7cm}
>{\raggedright\arraybackslash}p{1.8cm}
>{\raggedright\arraybackslash}p{2.6cm}
>{\raggedright\arraybackslash}X}
\toprule
Regime & Scaling  limit  & Rate of $\hat\alpha_n$ & Rate of $\hat\mu_n$ & Identifiability/Estimability of $\gamma$ % & Random-weight Bootstrap validity
\\
\midrule

Local-to-unity

$\alpha_n = 1 + \gamma/n$
&
$(X_{\lfloor ns\rfloor}/n) \Rightarrow  (\Upsilon_s)$, a CIR process
&
$n$
&
inconsistent
&
Nuisance parameter
%&Invalid
\\
\hline

Mildly stationary

$\alpha_n = 1 + \gamma/k_n$, \ $\gamma<0$, \ 
&
$(X_{\lfloor k_n s\rfloor}/k_n) \Rightarrow  (\Upsilon_s)$, stationary in the long-run
&
$\sqrt{n k_n}$
&
$\sqrt{n/k_n}$
&
Non-identifiable; normalized to $\gamma=-1$
%&Valid for the random-weight bootstrap in Theorem 6(b)
\\
\hline

Mildly explosive

$\alpha_n = 1 + \gamma/k_n$, \ $\gamma>0$, \ 
&
$(X_{\lfloor k_n s\rfloor}/k_n) \Rightarrow  (\Upsilon_s)$, explosive in the long-run
%after exponential rescaling, $X_{\lfloor k_n s\rfloor}/(k_n e^{s}) \Rightarrow Z$
&
$k_n {\alpha_n^{n/2}}$
&
inconsistent
&
Non-identifiable; Normalized to $\gamma=1$
%&Not established in the current draft
\\

\bottomrule
\end{tabularx}
\end{threeparttable}
\end{table}

\section{The local-to-unity model}
\label{sec:3localtounity}
This section studies the local-to-unity regime. Throughout, we maintain Assumptions~\ref{ass1}--\ref{assumptionnearlyunstable} and impose the following additional moment condition, which is used to obtain the diffusion approximation and the subsequent limit theory for the OLS estimator.
%In this section, we consider affine processes satisfying Assumptions 1-3. We also need the following moment condition. 
\begin{Ass}[Finite conditional moment of order $p$]
\label{AssAffineMoment}
There exists $p>2$ such that 
\[
\sup_n \mathbb E[Y_n^p] <\infty,
\qquad
\sup_n \mathbb E[\eta_n^p] <\infty,
\]
where $Y_n$ and $\eta_n$ denote random variables with LT's $\mathbb E[e^{-uY_n}]=e^{-a_n(u)}$ and
$\mathbb E[e^{-u\eta_n}]=e^{-b_n(u)}$, respectively.
\end{Ass}

\subsection{Scaling limit}
\begin{theo} 
\label{scaling}
Consider an affine model satisfying \eqref{eq1}-\eqref{eq2} with initial value $X_0=o(n)$, as well as Assumptions~\ref{ass1}, \ref{Assbeta}, \ref{assumptionnearlyunstable} and \ref{AssAffineMoment}. Then the rescaled process $(Y_s:=\frac{X_{\lfloor{ns}\rfloor}}{n}, s>0)$ converges weakly to the CIR process $(\Upsilon_s, s>0)$ defined by
\begin{equation}
\label{sde}
\mathrm{d}\Upsilon_s=(\mu+\gamma \Upsilon_s) \mathrm{d}s+ \sigma \sqrt{\Upsilon_s}\mathrm{d} B_s, \qquad \Upsilon_0=0,
\end{equation}
where $(B_s)$ is a standard Brownian motion.
\end{theo}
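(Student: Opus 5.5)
We prove Theorem~\ref{scaling} with the standard martingale-problem, or semimartingale characteristics, approach for Markov chains converging to diffusions, in the spirit of Ethier--Kurtz (Theorem 7.4.1) or Jacod--Shiryaev (Theorem IX.4.21). Write $X_t=\alpha_nX_{t-1}+\mu_n+\varepsilon_t$, where $\varepsilon_t:=X_t-\mathbb E[X_t\mid X_{t-1}]$ is a martingale difference with conditional variance $\beta_nX_{t-1}+\delta_n$. For the rescaled process $Y^n_s=X_{\lfloor ns\rfloor}/n$ we have the Doob decomposition
\[
Y^n_s=\frac{X_0}{n}+\sum_{t=1}^{\lfloor ns\rfloor}\Big(\frac{\gamma_n}{n}\frac{X_{t-1}}{n}+\frac{\mu_n}{n}\Big)+M^n_s,\qquad M^n_s=\frac1n\sum_{t=1}^{\lfloor ns\rfloor}\varepsilon_t .
\]
The drift part equals $\int_0^{\lfloor ns\rfloor/n}(\gamma_nY^n_u+\mu_n)\,du$. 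The predictable quadratic variation of $M^n$ is $\frac1n\sum_{t\le ns}\frac1n(\beta_nX_{t-1}+\delta_n)=\int_0^{\lfloor ns\rfloor/n}(\beta_nY^n_u+\delta_n/n)\,du$. Using Assumptions~\ref{Assbeta} and \ref{assumptionnearlyunstable}, these characteristics converge locally uniformly, as functionals of the path, to $\int_0^s(\mu+\gamma y_u)du$ and $\int_0^s\sigma^2y_u\,du$, which are exactly the drift and diffusion coefficients of \eqref{sde}. Since $X_0=o(n)$, the initial condition satisfies $Y^n_0\to0$.

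The proof proceeds in four steps. First, we obtain moment bounds. Taking expectations in \eqref{eq1} gives $\mathbb E X_t=\alpha_n^t X_0+\mu_n\sum_{j<t}\alpha_n^j$, so $\sup_{t\le nT}\mathbb E[X_t]/n$ is bounded because $\alpha_n^{nT}\to e^{\gamma T}$. For second moments we iterate \eqref{eq2}: $\mathbb E X_t^2\le \alpha_n^2\mathbb EX_{t-1}^2+O(n)$, which gives $\sup_{t\le nT}\mathbb E X_t^2=O(n^2)$. Combined with Doob's inequality for $M^n$, this yields $\sup_n\mathbb E\sup_{s\le T}(Y^n_s)^2<\infty$ and hence compact containment. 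Second, we verify the Lindeberg-type condition that jumps vanish, i.e.\ $\sum_{t\le nT}\mathbb E[(\varepsilon_t/n)^2\mathbf 1_{|\varepsilon_t|>\epsilon n}]\to0$. Third, we establish tightness, which follows from the Aldous criterion or the Jacod--Shiryaev criterion (C-tightness), given that the characteristics are Lipschitz in time with locally bounded integrands. Fourth, we identify the limit: any limit point solves the martingale problem for the generator $Lf(y)=(\mu+\gamma y)f'(y)+\frac{\sigma^2}{2}yf''(y)$ on $[0,\infty)$, started at $0$. Nonnegativity is preserved in the limit because each $Y^n\ge0$. Uniqueness follows from Yamada--Watanabe, since $\sqrt{y}$ is $1/2$-Hölder and the drift is Lipschitz, giving pathwise and hence weak uniqueness for \eqref{sde}. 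Consequently the whole sequence converges.

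The main obstacle is the Lindeberg condition, because the conditional variance of $\varepsilon_t$ grows with $X_{t-1}$ (which is of order $n$) and Assumption~\ref{AssAffineMoment} bounds moments of the building blocks $Y_n$ and $\eta_n$ rather than those of $\varepsilon_t$ directly. The plan is to use the affine structure: conditionally on $X_{t-1}=x$, $X_t$ is distributed as the sum of the "$x$-fold convolution" of the law with LT $e^{-a_n(u)}$ and an independent $\eta_n$. This sum is exact for integer $x$ and holds via infinite divisibility for real $x$, where we use the subordinator/Lévy increment over time $x$. Rosenthal's inequality for sums of independent centered terms (or its Lévy-process analogue via Burkholder) then gives $\mathbb E[|\varepsilon_t|^p\mid X_{t-1}]\le C(X_{t-1}^{p/2}+X_{t-1}+1)$ with $C$ uniform in $n$ by Assumption~\ref{AssAffineMoment}. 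Hence $\sum_{t\le nT}\mathbb E|\varepsilon_t/n|^p\le C n^{-p}\cdot nT\cdot n^{p/2}=O(n^{1-p/2})\to0$, since $p>2$. This argument requires $\mathbb E X_{t-1}^{p/2}=O(n^{p/2})$, which we obtain from the $L^2$ bound when $p\le4$; for $p>4$ we reduce to $p'\in(2,4]$ by Lyapunov's inequality. The other steps are routine applications of the cited functional limit theorems.
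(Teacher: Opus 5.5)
Your proposal is correct and follows essentially the same route as the paper. Both arguments show that the drift and second-moment characteristics of $X_{\lfloor ns\rfloor}/n$ converge to $\mu+\gamma y$ and $\sigma^2 y$, obtain the Lindeberg condition from the affine convolution representation and Rosenthal's inequality under Assumption~\ref{AssAffineMoment}, and conclude with the Ethier--Kurtz diffusion approximation theorem (Theorem~7.4.1) together with well-posedness of the CIR martingale problem. The one minor difference is that you derive global $L^2$ moment bounds, for compact containment and an unconditional Lindeberg sum, whereas the paper checks the conditions locally uniformly on $\{x/n\le K\}$, which the localization in Theorem~7.4.1 already handles.
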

Theorem~\ref{scaling} shows that local-to-unity affine processes admit a continuous-time limit of CIR type, rather than the Ornstein--Uhlenbeck limit familiar from linear AR(1) models. This difference reflects the fact that the conditional variance grows proportionally with the current state. %As a result, the affine framework preserves positivity or nonnegativity in the limit, while still allowing for near-unit-root persistence.
The limiting CIR process, which is itself an affine process in continuous time\footnote{That is, its conditional LT $\mathbb{E}\left[e^{-uX_{t+h}}\vert X_t\right]$ is exponential-affine in $X_t$ for any $t\geq 0$ and $h>0$.}, is the benchmark model in the term structure of interest rates \citep{duffie2000transform}.

The intuition behind this convergence result is as follows. The conditional expectation of the rescaled process satisfies 
$$
\mathbb{E}\left[Y_{s+\frac{1}{n}}\vert Y_s\right]=\frac{1}{n}\mathbb{E}\left[X_{\lfloor n(s +\frac{1}{n})\rfloor} \vert X_{\lfloor ns\rfloor}\right]=\frac{1}{n} (\alpha_n  X_{\lfloor ns\rfloor}+\mu_n)=\alpha_n Y_s+\frac{\mu_n}{n},$$
so that
$$\mathbb{E}\left[Y_{s+\frac{1}{n}}-Y_s\vert Y_s\right]=\frac{1}{n }(\gamma Y_s+\mu_n)=\frac{1}{n }(\gamma Y_s+\mu)+o(1/n).$$
Similarly, the conditional variance is given by 
$$\mathbb{V}\left[Y_{s+\frac{1}{n}}-Y_s\vert Y_s\right]=\frac{1}{n} \left(\beta_n Y_s+ \frac{\delta_n}{n}\right)= \frac{1}{n} \sigma^2 Y_s+o(1/n),$$
hence the linear drift and volatility functions in \eqref{sde}. 
\begin{proof}
    See Appendix~\ref{proofscaling}.
\end{proof}
%Note that if one retains only the conditional mean and variance conditions \eqref{eq1}-\eqref{eq2} and Assumptions 2-3, but drops the affine structure (and Assumption 1), then the weak convergence in Theorem \ref{scaling} is no longer guaranteed. The affine property therefore provides a convenient sufficient condition for convergence. More generally, it may be replaced by alternative conditions, such as the third and fourth conditional cumulants of $X_t$ given $X_{t-1}$ being bounded by affine functions of $X_{t-1}$.\footnote{If $(X_t)$ is affine and the third and fourth derivatives of the LTs $e^{-a}$, $e^{-b}$ exist at zero, then the third and fourth cumulants are also affine, similarly to \eqref{eq1}-\eqref{eq2}. See, for example, \cite{michel2020limiting} for a related convergence result based on higher order conditional moments. } Our proof relies on the affine structure, but does not require the existence of higher-order conditional moments.

Finally, note that the diffusion process defined in \eqref{sde} is well-defined (i.e., nonnegative) for any positive time $s$, and not only for $s\in[0,1]$. This diffusion may or may not reach 0 again after the starting time $s=0$, depending on the relation between $\mu$ and $\sigma^2$, we have \citep[p.391]{cox1985theory}:
\begin{itemize}
    \item If $2 \mu \geq \sigma^2$, then the process $(\Upsilon_s)$ remains strictly positive almost surely. This is known as the \emph{Feller condition}.
\item If $2 \mu < \sigma^2$, then zero will be revisited almost surely. This feature is particularly relevant in applications where the initial process $(X_s)$ frequently takes small or zero values, such as the short-term risk-free interest rate. 
\end{itemize}
%The limiting CIR process, which is an affine process in continuous time\footnote{That is, its conditional LT $\mathbb{E}[e^{-X_{t+h}}\vert X_t]$ is exponential affine in $X_t$ for any real date $t$ and real positive maturity $h$.}, is the benchmark model in the term structure of interest rates \citep{duffie2000transform}. This limit is different from the usual Ornstein-Uhlenbeck (OU) limiting process obtained for a linear process without intercept \citep{phillips1987towards}.

Because of the time and scale normalization, Theorem~\ref{scaling} can be interpreted as providing a macroscopic description of the process $(X_t)$. In contrast, Theorem~\ref{general} provides a high-frequency counterpart. 
\begin{theo}
\label{general}
    For any affine process $(X_t)$ satisfying \eqref{eq1}-\eqref{eq2}, and given $t$, $n$, the conditional distribution of $X_t$ given $X_{t-1}=x$ is approximately normal for large $x$:
    \begin{equation}
\label{infinitelydivisible}
\left(\frac{X_t-\mathbb{E}[X_t\mid X_{t-1}=x]}{\sqrt{\mathbb{V}[X_t\mid X_{t-1}=x]}}\vert X_{t-1} =x \right)\overset{w}{\longrightarrow} \mathcal{N}(0, 1)
\end{equation}
as $x$ increases to infinity, where $\overset{w}{%
\longrightarrow }$ denotes weak convergence of a distribution.
\end{theo}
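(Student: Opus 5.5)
The plan is to work directly with the conditional Laplace transform \eqref{car} and show that the standardized variable has a Laplace transform converging to that of $\mathcal{N}(0,1)$. Since $X_t$ given $X_{t-1}=x$ may take values of both signs after centering, I will use the two-sided transform evaluated at small real arguments of either sign. Alternatively, I can work with the characteristic function, obtained by analytic continuation of $e^{-a(u)}$ and $e^{-b(u)}$ to the imaginary axis. Fix $t$ and $n$, and write $a=a_n$, $b=b_n$. Set $m(x)=a'(0)x+b'(0)$ and $v(x)=-a''(0)x-b''(0)$; by \eqref{meancond}--\eqref{varcond} these are the conditional mean and variance, and $v(x)\sim\beta_n x\to\infty$ as $x\to\infty$.

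\textbf{Step 1 (structural decomposition).} The key observation is that $e^{-a(u)}$ is infinitely divisible. In the positive or nonnegative case this is imposed by the definition. In the count case it follows from \eqref{branching}, since for integer $x$ the variable is a sum of $x$ i.i.d.\ copies of $Y$. So, conditionally on $X_{t-1}=x$, we have $X_t\overset{d}{=}S_x+\eta$. Here $S_x$ has transform $e^{-xa(u)}$, $\eta$ has transform $e^{-b(u)}$, and the two are independent. Moreover, $S_x$ is the sum of $\lfloor x\rfloor$ i.i.d.\ copies of $Y$ (transform $e^{-a(u)}$) plus an independent remainder $R$ with transform $e^{-(x-\lfloor x\rfloor)a(u)}$. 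By Assumption~\ref{ass1}, $Y$ has finite mean $a'(0)$ and variance $-a''(0)=\beta_n>0$.

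\textbf{Step 2 (CLT on the main part).} By the classical Lindeberg--L\'evy CLT,
\[
\frac{\sum_{k=1}^{\lfloor x\rfloor}Y_k-\lfloor x\rfloor a'(0)}{\sqrt{\beta_n\lfloor x\rfloor}}\overset{w}{\longrightarrow}\mathcal N(0,1).
\]
Only finite variance is needed here, so no Lyapunov condition is required.

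\textbf{Step 3 (negligible pieces).} Next I show that, after centering and division by $\sqrt{v(x)}$, the terms $\eta$ and $R$ vanish in probability. For $\eta$ this is immediate because its variance $-b''(0)$ is fixed. For $R$, the variance is $(x-\lfloor x\rfloor)\beta_n\le\beta_n$, which is bounded uniformly in $x$. Chebyshev's inequality then handles both terms. Since $v(x)/(\beta_n\lfloor x\rfloor)\to1$ and the centering constants match $m(x)$ exactly, Slutsky's lemma gives \eqref{infinitelydivisible}.

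\textbf{Expected obstacle.} The main care point is Step~1 for non-integer $x$: one has to justify that $e^{-sa(u)}$ with fractional $s$ is a genuine Laplace transform with variance $s\beta_n$. I would get this from infinite divisibility, which gives a L\'evy--Khintchine representation and so makes $e^{-sa}$ a transform for every $s\ge0$. Its variance $-s a''(0)$ follows by differentiating at $0$. If I want to avoid the decomposition altogether, I can fall back on a direct cumulant argument. Writing $\phi(\theta)=\log$ of the conditional characteristic function, this gives
\[
\phi\bigl(\theta/\sqrt{v(x)}\bigr)-i\theta m(x)/\sqrt{v(x)}=-x\bigl[a(-i\theta/\sqrt{v})-a'(0)(-i\theta/\sqrt v)\bigr]-[\dots]_b.
\]
A second-order Taylor expansion of $a$ at $0$, using only twice differentiability, then shows that this tends to $-\theta^2/2$, because $x/v(x)\to1/\beta_n$.
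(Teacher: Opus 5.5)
Your Steps 1--3 are essentially the paper's own argument. Conditionally on $X_{t-1}=x$, you write $X_t$ as a sum of $\lfloor x\rfloor$ i.i.d.\ copies with transform $e^{-a}$, the immigration term, and a fractional remainder with transform $e^{-(x-\lfloor x\rfloor)a}$; you then apply the CLT to the first piece and show the other two are negligible. Your Chebyshev bound $\mathbb{V}[R]\le\beta_n$ holds uniformly in the fractional part, which is slightly cleaner than the paper's ``for fixed $x-\lfloor x\rfloor$''.

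There are two harmless slips. First, in the count case $e^{-a}$ need not be infinitely divisible (binomial offspring is a counterexample), but you do not need this there, since $x$ is an integer and $R\equiv 0$. Second, the two-sided Laplace transform at negative arguments need not be finite under Assumption~\ref{ass1}, but Steps 1--3 never use it.
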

 %, which explains the same asymptotic distribution of WLS and MLE. 
\begin{proof}
    See Appendix~\ref{proofgeneral}.
\end{proof}
%This proposition can be interpreted as an equivalent form of the weak convergence result derived for process $(\frac{X_{\lfloor k_ns \rfloor}}{k_n})$. 

Theorem~\ref{general} can also be interpreted through the lens of the central limit theorem (CLT). For expository purposes, consider the case where the functions $a$ and $b$ satisfy $a=b$ and $x$ is an integer. In this setting, $\mathbb{E}\left[e^{-uX_{t+1}} \vert X_t=x\right]=e^{-(x+1)a(u)}$, which implies that $X_{t+1}$, conditional on \(X_t = x\), can be represented as the convolution of \((x+1)\) i.i.d. random variables with LT \(e^{-a(u)}\). Therefore, the normalized quantity in \eqref{infinitelydivisible} corresponds directly to the classical CLT.%, since it involves the sum of an increasing number of i.i.d. terms as \(x\) increases to infinity.

\subsection{Large sample theory for the OLS estimator}
%Theorem~1 leads to the following large-sample properties of the OLS estimator.
The diffusion limit in Theorem~\ref{scaling} yields the following large-sample distribution for the OLS estimator.
\begin{theo} 
\label{thm1}
Consider Model~\eqref{eq1}-\eqref{eq2}, with initial condition $X_0=o(n)$, and suppose Assumptions~\ref{ass1}, \ref{Assbeta}, \ref{assumptionnearlyunstable} and \ref{AssAffineMoment} hold. Then, as $n$ increases to infinity, the joint distribution of 
$
\begin{bmatrix}n (\hat{\alpha}_n-\alpha_n)\\
\hat{\mu}_n- \mu_n 
 \end{bmatrix}
$
converges weakly:
 \begin{equation}
 \label{cmt}
 \begin{bmatrix} 
    n(\hat{\alpha}_n-\alpha_n)\\
    \hat{\mu}_n - \mu_n 
    \end{bmatrix} \overset{w}{\longrightarrow}
 \begin{bmatrix}
         \int_0^1 \Upsilon_s^2 \mathrm{d}s &     \int_0^1 \Upsilon_s \mathrm{d}s\\
         \int_0^1 \Upsilon_s \mathrm{d}s & 1
 \end{bmatrix}^{-1} 
 \begin{bmatrix}
   \sigma  \int_0^1 \Upsilon_s^{3/2} \mathrm{d}B_s\\
   \sigma \int_0^1 \Upsilon_s^{1/2} \mathrm{d}B_s
 \end{bmatrix}.
\end{equation}
 
\end{theo}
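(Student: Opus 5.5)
We write the OLS error in the usual way. Let $\varepsilon_t := X_t - \alpha_n X_{t-1} - \mu_n$, a martingale difference sequence with respect to $\mathcal F_t=\sigma(X_0,\dots,X_t)$, with $\mathbb E[\varepsilon_t^2\mid \mathcal F_{t-1}] = \beta_n X_{t-1}+\delta_n$. Then
\[
\begin{bmatrix} \hat\alpha_n-\alpha_n\\ \hat\mu_n-\mu_n\end{bmatrix}
=\begin{bmatrix} \sum X_{t-1}^2 & \sum X_{t-1}\\ \sum X_{t-1} & n\end{bmatrix}^{-1}
\begin{bmatrix} \sum X_{t-1}\varepsilon_t\\ \sum \varepsilon_t\end{bmatrix}.
\]
We pre-multiply by $D_n=\mathrm{diag}(n,1)$ and insert the normalisation $D_n^{-1}$ appropriately. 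The rescaled design matrix becomes $n^{-3}\sum X_{t-1}^2$, $n^{-2}\sum X_{t-1}$ and $1$ (after dividing the second row/column by $n$). The score becomes $(n^{-2}\sum X_{t-1}\varepsilon_t,\; n^{-1}\sum\varepsilon_t)'$. This scaling is consistent with the variances: $\mathrm{Var}(\sum X_{t-1}\varepsilon_t)\approx \sigma^2\sum X_{t-1}^3 = O(n^4)$ and $\mathrm{Var}(\sum\varepsilon_t)\approx \sigma^2\sum X_{t-1}=O(n^2)$. Checking the algebra confirms that $n(\hat\alpha_n-\alpha_n)$ and $\hat\mu_n-\mu_n$ equal the inverse of the normalised design matrix applied to this normalised score.

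\textbf{Step 1 (design matrix).} By Theorem~\ref{scaling}, $Y^{(n)}_s = X_{\lfloor ns\rfloor}/n \Rightarrow \Upsilon$ in $D[0,1]$. Since $n^{-3}\sum X_{t-1}^2=\int_0^1 (Y^{(n)}_s)^2ds$ and $n^{-2}\sum X_{t-1}=\int_0^1 Y^{(n)}_s ds$ up to negligible boundary terms, the continuous mapping theorem gives joint convergence to $\int\Upsilon^2$ and $\int\Upsilon$. The limit matrix is a.s. invertible: its determinant is $\int\Upsilon^2-(\int\Upsilon)^2>0$ unless $\Upsilon$ is a.s. constant on $[0,1]$, which is impossible for a CIR process started at $0$ with $\mu>0$.

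\textbf{Step 2 (score, the main step).} Define the martingale $M^{(n)}_s := n^{-1}\sum_{t\le ns}\varepsilon_t$. Its predictable quadratic variation is $n^{-2}\sum_{t\le ns}(\beta_nX_{t-1}+\delta_n)\to \sigma^2\int_0^s\Upsilon_u du$, jointly with $Y^{(n)}$. The Lindeberg condition follows from Assumption~\ref{AssAffineMoment}, since conditional $p$-th moments of $\varepsilon_t$ are $O(X_{t-1}^{p/2}+X_{t-1}+1)$ by Rosenthal-type bounds for the branching/infinitely divisible representation; this estimate is presumably already established in the proof of Theorem~\ref{scaling}. In fact, that proof identifies $M^{(n)}$ as the martingale part of $Y^{(n)}$, since $Y^{(n)}_s = Y^{(n)}_0+n^{-1}\sum_{t\le ns}(\gamma_n X_{t-1}/n+\mu_n)+M^{(n)}_s$. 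Hence $(Y^{(n)},M^{(n)})\Rightarrow(\Upsilon, M)$ with $M_s=\Upsilon_s-\int_0^s(\mu+\gamma\Upsilon_u)du=\sigma\int_0^s\Upsilon_u^{1/2}dB_u$. This gives $n^{-1}\sum\varepsilon_t\to \sigma\int_0^1\Upsilon^{1/2}dB$ directly.

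For the other component, $n^{-2}\sum X_{t-1}\varepsilon_t=\int_0^1 Y^{(n)}_{s-}\,dM^{(n)}_s$. I would invoke the Kurtz--Protter (or Jakubowski--Mémin--Pagès) theorem on weak convergence of stochastic integrals. This requires uniform control of the variation, namely $\sup_n\mathbb E[M^{(n)},M^{(n)}]_1<\infty$, and holds because $\mathbb E[X_t]=O(n)$. The conclusion is that $\int Y^{(n)}dM^{(n)}\Rightarrow\int_0^1\Upsilon\,dM=\sigma\int_0^1\Upsilon^{3/2}dB$, jointly with all previous quantities. I expect this joint convergence to be the main technical obstacle, specifically verifying UT/UCV and the Lindeberg-type tightness under only $p>2$ moments with state-dependent variance. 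As a fallback, Itô-type algebra can be used: $\sum X_{t-1}\varepsilon_t = \tfrac12\bigl(X_n^2-X_0^2-\sum(X_t-X_{t-1})^2\bigr)-\sum X_{t-1}(\tfrac{\gamma_n}{n}X_{t-1}+\mu_n)$, and $n^{-2}\sum (X_t-X_{t-1})^2\to\sigma^2\int\Upsilon$ by an $L^1$ law of large numbers for the martingale difference $\varepsilon_t^2-\mathbb E[\varepsilon_t^2\mid\mathcal F_{t-1}]$. Both routes express the score as a continuous functional of $Y^{(n)}$ plus $o_p(1)$ terms, so Itô's formula for $\Upsilon^2$ identifies the limit.

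\textbf{Step 3.} Finally, apply the continuous mapping theorem to the jointly convergent vector (design matrix, score), using a.s. invertibility of the limit, to obtain \eqref{cmt}.
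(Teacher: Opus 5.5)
Your proposal is correct and follows the paper's proof essentially step for step. It uses the same normalization of design matrix and score, the same continuous-mapping identification of $n^{-1}\sum_{t\le ns}\varepsilon_t$ as the martingale part of $Y^{(n)}$ (part $a)$ of Lemma~\ref{lem:firstlemma}), and a final continuous-mapping step; where the paper only invokes an unspecified tightness argument (deferring to another reference) for $n^{-2}\sum X_{t-1}\varepsilon_t\to\sigma\int_0^1\Upsilon_s^{3/2}\mathrm{d}B_s$, you make it concrete (Kurtz--Protter with uniform tightness checked via $\sup_n\mathbb{E}[M^{(n)},M^{(n)}]_1<\infty$, or the more elementary summation-by-parts/It\^o identity), and you also supply the almost-sure invertibility of the limiting design matrix, which the paper leaves implicit.
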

Note that if the process $(X_t)$ starts from $X_0=cn$ instead, the result still holds, except that the limiting diffusion process in \eqref{sde} starts from $\Upsilon_0=c$. 
\begin{proof}
    See Appendix~\ref{proofthm1}.
\end{proof}
 %This is to be compared with \cite{ispany2003asymptotic, ispany2014asymptotic}, who show that in the nearly unstable INAR(1) process, the asymptotic distribution of the estimator $(\alpha, \mu)$ is jointly Gaussian and is consistent. 

%\begin{remark}

This rate is slower than in the local-to-unity INAR(1) \citep{ispany2003asymptotic} and linear AR(1) models with intercept \citep{fei2018limit}, where the slope and intercept estimators typically converge at rates $n^{3/2}$ and $n^{1/2}$, respectively. The difference is due to the state-dependent affine variance: when $X_{t-1}$ is of order $n$, the conditional innovation variance is also of order $n$, reducing the effective information about the conditional mean parameters.

Theorem~\ref{thm1} also shows that local-to-unity affine models inherit the main inferential drawback of local-to-unity linear AR(1) models. Indeed, the natural estimator of parameter $\gamma$ is $\hat{\gamma}_n= -n(1-\hat{\alpha})$, which is inconsistent, since $\hat{\alpha}_n$ is only $n$-consistent. Moreover, the intercept estimator $\hat\mu_n$ is also inconsistent. Because the asymptotic distribution of $\hat{\alpha}_n$ depends on both $\gamma$ and $\mu$, the asymptotic distribution derived in Theorem~\ref{thm1} is not directly usable for inference. These limitations motivate the alternative, mildly integrated framework. 

\section{Probabilistic properties under mild integration}
\label{sec:4probmild}
This section studies the probabilistic implications of the mildly integrated specification in Assumption~\ref{assumptionmildly}. The key difference from the local-to-unity framework is that the relevant time horizon is \(n/k_n\), which diverges. As a result, the long-run behavior of the limiting CIR diffusion determines the large-sample properties of the triangular array.

%\section{Mildly integrated affine processes: probabilistic properties}
%As shown in the previous section, the local-to-unity affine model exhibits several  well-known drawbacks of local-to-unity asymptotics, including the impossibility of consistently estimating $\gamma$. For this reason, we now turn our attention to the alternative mildly integrated framework.

%Recently, \cite{liu2023asymptotic, peng2024note} replace, for an INAR(1) process, the nearly unstable condition $\alpha_n=1-\frac{\gamma}{n}$ by Similar mildly integrated models have also been considered for AR(1) processes \citep{phillips2007limit, yu2025inference}.
%\subsection{The main result}
First, note that the parameterization in \eqref{mildly} identifies the sign of \(\gamma\), but not its magnitude. Indeed, replacing \((\gamma,k_n)\) by \((c\gamma,ck_n)\), for any \(c>0\), leaves \(\alpha_n\) unchanged. We therefore impose the following normalization.
%\eqref{mildly} is invariant to multiplying both the numerator and denominator of $\frac{\gamma}{k_n}$ by the same positive constant. In other words, the parameter $\gamma$ is not identifiable in model \eqref{mildly}. As a consequence, we have the following proposition:
\begin{prop}
 Under Assumption~\ref{assumptionmildly}, without loss of generality, we may set $\gamma = 1$ if $\gamma > 0$ and $\gamma = -1$ if $\gamma < 0$.
\end{prop}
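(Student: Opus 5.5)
The plan is a reparameterization argument. Given any pair $(\gamma,k_n)$ satisfying Assumption~\ref{assumptionmildly}, I will construct a new pair $(\tilde\gamma,\tilde k_n)$ with $\tilde\gamma=\pm1$ that produces exactly the same sequence $\alpha_n$. I will then check that every ingredient of the model is unchanged under this substitution. The natural choice is
\[
\tilde k_n := \frac{k_n}{|\gamma|}, \qquad \tilde\gamma := \frac{\gamma}{|\gamma|}=\operatorname{sign}(\gamma),
\]
which is well defined because Assumption~\ref{assumptionmildly} requires $\gamma\neq 0$.

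The steps are as follows.
\begin{enumerate}
\item Verify the identity
\[
1+\frac{\tilde\gamma}{\tilde k_n}=1+\frac{\gamma/|\gamma|}{k_n/|\gamma|}=1+\frac{\gamma}{k_n}=\alpha_n
\]
for every $n$. Hence the conditional mean and variance in \eqref{eq1}--\eqref{eq2}, and with them the whole law of the triangular array $(X_t)$, depend on $(\gamma,k_n)$ only through $\alpha_n$. The sequences $\beta_n,\mu_n,\delta_n$ and the functions $a_n,b_n$ are untouched.
\item Check that $(\tilde\gamma,\tilde k_n)$ still satisfies Assumption~\ref{assumptionmildly}. First, $\tilde k_n>0$ because $|\gamma|>0$. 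Second, \eqref{conditiononkn} holds, since $\tilde k_n=k_n/|\gamma|\to\infty$ and $\tilde k_n/n=(k_n/n)/|\gamma|\to0$. Third, $\tilde\gamma\neq0$ and $\operatorname{sign}(\tilde\gamma)=\operatorname{sign}(\gamma)$, so the classification into mildly stationary ($\gamma<0$) and mildly explosive ($\gamma>0$) regimes is preserved.
\item Conclude that any statement about the model holds for a general $\gamma$ if and only if it holds for $\gamma=\pm1$ with $k_n$ replaced by $k_n/|\gamma|$. This covers the distribution of $(X_t)$, the OLS estimator in \eqref{cls_eq}, and any scaling limit along a sequence $k_n$. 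Results with rates expressed through $k_n$ transfer by this substitution, because the rates only change by the fixed factor $|\gamma|$.
\end{enumerate}

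There is no real analytical obstacle here. The only point requiring care is step 3: one must make precise that ``without loss of generality'' means results are stated in terms of the normalized $\tilde k_n$. For example, a scaling limit of $X_{\lfloor k_n s\rfloor}/k_n$ under the original parameterization corresponds to a time-and-space rescaled version of the normalized limit, and I would note this explicitly.
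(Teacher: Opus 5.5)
Your proposal is correct and is the same reparameterization argument the paper gives: the paper observes that replacing $(\gamma,k_n)$ by $(c\gamma,ck_n)$ for any $c>0$ leaves $\alpha_n$ unchanged, so only the sign of $\gamma$ is identified. Your choice $c=1/|\gamma|$, together with your check that $k_n/|\gamma|$ still satisfies \eqref{conditiononkn}, simply spells out what the paper leaves implicit.
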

The values $1$ and $-1$ are referred to as \emph{pseudo-true values} \citep{phillips2023estimation}. This normalization serves as an identification condition and will be maintained throughout the remainder of the paper. Accordingly, we adopt the following convention: 
%Throughout the remainder of the paper, we impose this normalization condition:
\[\gamma = \left\{ \begin{array}{rl} 1,& \text{if $\gamma >0$}, \\ -1, & \text{if $\gamma <0$.} \end{array} \right. \]

 \subsection{Scaling properties}
 We have the following analog of Theorem~\ref{scaling}:

\begin{prop}
\label{scalinglimit}
 Under Assumptions~\ref{ass1}, \ref{Assbeta}, \ref{assumptionmildly} and \ref{AssAffineMoment}, as well as the initial value condition $X_0=o(k_n)$, the rescaled process $(Y_s, s > 0) = \left(\frac{X_{\lfloor k_n s \rfloor}}{k_n}, s > 0\right)$ converges weakly in \(D([0,\infty[)\) to the diffusion process defined in \eqref{sde}, with parameter $\gamma = 1$ if $\gamma > 0$, and $\gamma = -1$ if $\gamma < 0$.
\end{prop}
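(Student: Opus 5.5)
The plan is to repeat the argument behind Theorem~\ref{scaling}, with the time and space normalization $n$ replaced by $k_n$. The only new feature is that we want convergence on $D([0,\infty[)$ rather than on a fixed interval. Weak convergence in $D([0,\infty[)$ with the Skorokhod topology is equivalent to weak convergence of the restrictions to $D([0,T])$ for every $T$ (at continuity points, which is every $T$ since the limit is continuous). Since the horizon $n/k_n$ diverges, every fixed $T$ is eventually inside the sample, so it suffices to fix $T>0$ and work on $[0,T]$.

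\textbf{Step 1: generator computation.} For $Y_s=X_{\lfloor k_n s\rfloor}/k_n$, conditions \eqref{eq1}--\eqref{eq2} give
\[
\mathbb{E}[Y_{s+1/k_n}-Y_s\mid Y_s]=\frac{1}{k_n}\left(\gamma Y_s+\mu_n\right),
\qquad
\mathbb{V}[Y_{s+1/k_n}-Y_s\mid Y_s]=\frac{1}{k_n}\left(\beta_n Y_s+\frac{\delta_n}{k_n}\right).
\]
By Assumption~\ref{Assbeta}, the drift and variance per unit time converge locally uniformly in $y$ to $\mu+\gamma y$ and $\sigma^2 y$, with $\gamma=\pm1$ after the normalization. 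This is exactly the computation in the intuition following Theorem~\ref{scaling}, with $n$ replaced by $k_n$.

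\textbf{Step 2: limit theorem for Markov chains.} I would apply the standard result (e.g.\ Ethier--Kurtz, Thm.~7.4.1, or Jacod--Shiryaev IX) to the truncated characteristics on $\{y\le K\}$. It needs three inputs.

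First, a conditional Lindeberg condition. Conditionally on $X_{t-1}=x$, the increment is a sum of an $x$-fold (infinitely divisible) convolution of $Y_n$ and an independent $\eta_n$. Rosenthal's inequality together with Assumption~\ref{AssAffineMoment} gives
\[
\mathbb{E}\left[|X_t-\mathbb{E}[X_t\mid X_{t-1}]|^p \mid X_{t-1}=x\right]\le C\left(x^{p/2}+x+1\right).
\]
After scaling by $k_n^{-p}$ and summing over $k_nT$ steps, this is $O(k_n^{1-p/2})\to0$ on $\{Y\le K\}$, since $p>2$.

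Second, convergence of the initial condition: $Y_0=X_0/k_n\to0$.

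Third, well-posedness of the martingale problem for the CIR generator $(\mu+\gamma y)f'+\tfrac12\sigma^2yf''$ on $[0,\infty[$. This holds by Yamada--Watanabe, since the diffusion coefficient is $1/2$-Hölder, and it holds for either sign of $\gamma$.

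\textbf{Step 3: removing the localization.} To pass from the truncated statement to the full one, I need $\sup_{s\le T}Y_s$ to be tight. From \eqref{eq1}, $\mathbb{E}X_t=\alpha_n^t X_0+\mu_n\sum_{j<t}\alpha_n^j$. For $t\le k_nT$ we have $\alpha_n^t\le e^{|\gamma|T}$, so $\mathbb{E}X_t=O(k_n)$ uniformly in $t\le k_nT$.

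Writing $X_t-\mathbb{E}X_t$ via the martingale decomposition and applying Doob's inequality to $\alpha_n^{-t}(X_t-\cdots)$, its quadratic variation has expectation
\[
\sum_{t\le k_nT}\left(\beta_n\mathbb{E}X_{t-1}+\delta_n\right)=O(k_n^2).
\]
Hence $\mathbb{E}\sup_{s\le T}Y_s^2=O(1)$, which gives the required tightness. Where possible I would simply cite the proof of Theorem~\ref{scaling} (Appendix~\ref{proofscaling}), which handles the $n$-normalization with the same ingredients; the arguments carry over verbatim because only the step size $1/k_n$ and a finite horizon enter.

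\textbf{Main obstacle.} I expect the hardest part to be uniformity in the triangular array. The laws of $Y_n$ and $\eta_n$ vary with $n$, so the Lindeberg bound must be uniform in $n$, which is precisely why Assumption~\ref{AssAffineMoment} uses $\sup_n$. For the explosive sign $\gamma=1$, all bounds grow like $e^{T}$, but they stay finite for each fixed $T$, which is enough for convergence in $D([0,\infty[)$.
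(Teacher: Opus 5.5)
Your proposal is correct and is essentially the paper's argument. The paper obtains Proposition~\ref{scalinglimit} by viewing the array with $\alpha_n=1+\gamma/k_n$ as a local-to-unity system on time scale $k_n$ and invoking Theorem~\ref{scaling}, whose proof (Appendix~\ref{proofscaling}) is exactly your drift and second-moment computation, Rosenthal-based Lindeberg bound, CIR well-posedness and Ethier--Kurtz Theorem~7.4.1, with $n$ replaced by $k_n$. Your Step~3 bound on $\sup_{s\le T}Y_s$ via Doob's inequality is a harmless extra, since the Ethier--Kurtz criterion (locally uniform characteristics plus a well-posed martingale problem) already gives convergence in $D([0,\infty[)$ without a separate localization-removal step.
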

In other words, a mildly stationary process $(X_t)$ behaves like a local-to-unity affine process, up to both a time and scale transformation. This result is a direct consequence of Theorem~\ref{scaling}. Indeed, restricting attention to the first $\lfloor k_n \rfloor$ observations of $(X_t)$ yields a local-to-unity system, to which Theorem~\ref{scaling} applies.\footnote{This interpretation of the mildly integrated model as a sequence of local-to-unity blocks originates from \cite{phillips2001estimate, phillips2010smoothing}.} Therefore, the entire sample $X_t, t=1,\dots,n$ can be viewed as a local-to-unity model with autoregressive coefficient $1\pm \frac{1}{k_n}$, but with $n$ observations instead of $k_n$ observations. %Then we recall that Theorem \ref{thm1} can also be applied to a time series with a much larger sample size of $n$ instead of $k_n$. 

 By Proposition~\ref{scalinglimit}, the analysis of $X_t, t=1,\dots,n$, now involves the behavior of the diffusion \eqref{sde} over the interval $[0, n/k_n]$, whose upper bound increases to infinity as $n$ increases. This contrasts with the previous local-to-unity case, for which the rescaled process only depends on the trajectory of the diffusion on $[0,1]$. Therefore, we study the long-run behavior (i.e., as $s \uparrow \infty$) of this diffusion. We have the following:
  %   If $\gamma>0$, then $(\Upsilon_t)$ is asymptotically explosive at an exponential rate: %Intuitively, in the three terms on the right hand side of eq.\eqref{sde}, the second term $\gamma \Upsilon_t \mathrm{d}t$ is dominant. Thus we have, informally, $\mathrm{d} \Upsilon_t \approx \gamma \Upsilon_t \mathrm{d}t$. More precisely, we have the following result:
    \begin{prop}[\cite{going2003survey}]
    \label{stationaritycir}
    \begin{enumerate}[$i)$]
        \item If $\gamma<0$, that is, if the process $(X_t)$ is mildly stationary, then the CIR process $(\Upsilon_s)$ defined in eq.\eqref{sde} is asymptotically stationary and nonnegative, with a gamma invariant distribution.
        
        \item If $\gamma>0$ and is normalized to 1, then $(\Upsilon_s)$ is asymptotically explosive at an exponential rate. More precisely, $\frac{\Upsilon_s}{e^{\gamma s}}$  converges almost surely to a random variable $Z$ that is gamma distributed with shape parameter $\frac{2\mu}{\sigma^2}$ and scale parameter $\frac{\sigma^2}{2}$. In particular, it has mean $\mu$ and variance $\frac{\mu \sigma^2}{2}$. 
    \end{enumerate}    
    \end{prop}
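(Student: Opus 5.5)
The plan rests on the explicit conditional Laplace transform of the CIR diffusion \eqref{sde}. For $\lambda\geq 0$ and $\Upsilon_0=0$, the affine property suggests the ansatz $\mathbb{E}[e^{-\lambda \Upsilon_s}]=e^{-\phi(s,\lambda)}$. The function $\phi$ then solves the Riccati-type ODEs
$\partial_s\psi=\gamma\psi-\tfrac{\sigma^2}{2}\psi^2$, $\psi(0)=\lambda$, and $\partial_s\phi=\mu\psi$, $\phi(0)=0$. Solving these explicitly, or citing \cite{going2003survey}, gives
\[
\mathbb{E}[e^{-\lambda\Upsilon_s}]=\Bigl(1+\lambda\tfrac{\sigma^2}{2\gamma}(e^{\gamma s}-1)\Bigr)^{-2\mu/\sigma^2}.
\]
So $\Upsilon_s$ is exactly gamma distributed, with shape $2\mu/\sigma^2$ and scale $\tfrac{\sigma^2}{2\gamma}(e^{\gamma s}-1)$. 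I would verify this by Itô's formula applied to $e^{-\psi(s-r,\lambda)\Upsilon_r-\phi(s-r,\lambda)}$, which is then a martingale in $r$. Pathwise existence, uniqueness and nonnegativity of the solution are classical (Yamada--Watanabe); they are taken as given.

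\textbf{Case $\gamma<0$.} Letting $s\to\infty$, $e^{\gamma s}\to0$, so the transform converges to $(1+\lambda\sigma^2/(2|\gamma|))^{-2\mu/\sigma^2}$. This is the transform of a gamma law with shape $2\mu/\sigma^2$ and scale $\sigma^2/(2|\gamma|)$, so weak convergence follows by the continuity theorem for Laplace transforms. To show that this gamma law is invariant, start the diffusion from it; the general transform $\exp(-\psi(s,\lambda)x-\phi(s,\lambda))$ integrated against the gamma law reproduces the same law. Asymptotic stationarity of the process, in the sense that the finite-dimensional laws of $(\Upsilon_{s+h})_h$ converge to the stationary ones, follows from the Markov property together with the one-dimensional convergence. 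Nonnegativity is inherited from the solution.

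\textbf{Case $\gamma=1$.} Set $M_s=e^{-s}\Upsilon_s$. Itô's formula gives $dM_s=\mu e^{-s}ds+\sigma e^{-s}\sqrt{\Upsilon_s}\,dB_s$. The drift part is integrable on $[0,\infty)$ with total $\mu$. For the martingale part $N_s=\int_0^s\sigma e^{-r}\sqrt{\Upsilon_r}\,dB_r$, the quadratic variation has expectation
\[
\int_0^\infty\sigma^2e^{-2r}\mathbb{E}\Upsilon_r\,dr=\int_0^\infty\sigma^2e^{-2r}\mu(e^r-1)\,dr<\infty.
\]
So $N$ is an $L^2$-bounded martingale and converges a.s. and in $L^2$. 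Hence $M_s\to Z$ a.s. The law of $Z$ comes from the transform: $\mathbb{E}[e^{-\lambda e^{-s}\Upsilon_s}]=(1+\lambda\tfrac{\sigma^2}{2}(1-e^{-s}))^{-2\mu/\sigma^2}\to(1+\lambda\sigma^2/2)^{-2\mu/\sigma^2}$. This identifies $Z$ as gamma with shape $2\mu/\sigma^2$ and scale $\sigma^2/2$, hence with mean $\mu$ and variance $\mu\sigma^2/2$.

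The main obstacle is rigorously justifying the closed-form transform, i.e.\ the martingale verification. One needs integrability of $\psi\sqrt{\Upsilon}$ to upgrade the local martingale to a true martingale. This is easy here because the exponential is bounded by $1$ (since $\psi,\phi\geq0$), so the local martingale is bounded and therefore a true martingale. The remaining steps are routine.
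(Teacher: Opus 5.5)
Your proof is correct. The paper gives no argument of its own for this proposition beyond the citation of \cite{going2003survey}, so your route differs from the one it relies on.

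The cited route reads the result off the representation of the CIR process as a deterministic space--time change of a squared Bessel process: $\Upsilon_s=e^{\gamma s}\rho\bigl(\tfrac{\sigma^2}{4\gamma}(1-e^{-\gamma s})\bigr)$, where $\rho$ is a squared Bessel process of dimension $4\mu/\sigma^2$ started at $0$, and $\rho(t)$ is gamma with shape $2\mu/\sigma^2$ and scale $2t$. Part $i)$ then follows from this explicit marginal law. Part $ii)$ comes from path continuity alone: for $\gamma=1$, $e^{-s}\Upsilon_s=\rho\bigl(\tfrac{\sigma^2}{4}(1-e^{-s})\bigr)\to\rho(\sigma^2/4)$ almost surely, which identifies $Z$ and its law in one step.

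Your route is different in two places:
\begin{itemize}
\item You obtain the gamma marginal from the affine Riccati system. This is the continuous-time analogue of the discrete Riccati iteration the paper itself uses in the proof of Lemma~\ref{lemma3}$(i)$.
\item You get the almost sure limit from the $L^2$-martingale convergence theorem applied to $e^{-s}\Upsilon_s=\mu(1-e^{-s})+N_s$.
\end{itemize}
Your argument is self-contained and needs no Bessel-process machinery. It also gives the mean and variance of $Z$ directly, as $\mu$ and $\mathbb{E}\langle N\rangle_\infty=\mu\sigma^2/2$. The Bessel route is shorter once that machinery is available, because a single representation delivers both the almost sure convergence and the law of $Z$.

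Your verification steps are sound:
\begin{itemize}
\item The exponential local martingale is bounded because $\psi,\phi\ge 0$, so it is a true martingale.
\item The explicit invariance check against the gamma law is correct.
\item The passage from one-dimensional to finite-dimensional convergence is valid. It implicitly uses the Feller property, which is immediate from the affine form $\exp(-\psi(s,\lambda)x-\phi(s,\lambda))$.
\end{itemize}
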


%    \begin{proof}       Part $i)$ and $ii)$ can be found in \cite{going2003survey}, eq.(4). Part $iii)$ is due to the fact that the conditional distribution of $\Upsilon_t$ given $\Upsilon_0=0$ is chi-square with degrees of freedom 4, and scale parameter $2(e^t-1)$. Thus $Z$ is gamma with shape parameter 2 and rate parameter 2.    \end{proof}

   \subsection{Comparison with mildly integrated AR(1)} 
Let us now compare the mildly stationary affine model with the mildly stationary AR(1) process with intercept studied in \citep{fei2018limit}:
\begin{equation}
    \label{feimodel}
    X_t = \alpha_n X_{t-1} + \mu + \epsilon_t, \qquad t = 1, \ldots, n, \quad X_0 = 0,
\end{equation}
where $(\epsilon_t)$ is i.i.d. with mean zero and variance $\sigma^2$, and $\alpha_n=1-\frac{1}{k_n}$, with $\mu \neq 0$. By iteration (see \cite{fei2018limit}, Equation (5)), we obtain
$$
X_t=\mu k_n (1-\alpha_n^t)+ \sum_{j=1}^t \alpha_n^{t-j} \epsilon_j. 
$$
The first term is deterministic, while the second term $X_{0,t}:=\sum_{j=1}^t \alpha_n^{t-j} \epsilon_j$ is a mildly stationary AR(1) process without drift in the sense of \cite{phillips2007limit}. Equivalently, by the time-change argument following Proposition~\ref{scalinglimit}, it can be viewed as a local-to-unity linear AR(1) model with autoregressive coefficient $1-\frac{1}{k_n}$, observed over a horizon of order $n$. By Lemma~1 of \cite{phillips1987towards}, we obtain: 
\begin{prop} 
\label{feilm1}
    In model \eqref{feimodel}, under the mildly stationary assumption $\alpha_n=1-\frac{1}{k_n}$,  the rescaled process $\left(\frac{ X_{\lfloor k_n s \rfloor }}{k_n}, s\geq0\right)$ converges weakly to the deterministic process $\mu(1-e^{-s})$ as $n \to \infty$. 
   %     \item in the mildly stationary case $\alpha_n=1+\frac{1}{k_n}$, the rescaled process $(\frac{ X_{\lfloor k_n s \rfloor }}{k_n e^{s}})$ converges to the constant process $\mu$ as $n$ increases to infinity.  
\end{prop}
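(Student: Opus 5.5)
We split $X_{\lfloor k_n s\rfloor}/k_n$ according to the iterated representation
$$
\frac{X_{\lfloor k_n s\rfloor}}{k_n}=\mu\bigl(1-\alpha_n^{\lfloor k_n s\rfloor}\bigr)+\frac{X_{0,\lfloor k_n s\rfloor}}{k_n}.
$$
We then treat the deterministic part and the stochastic part separately.

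\textbf{Deterministic part.} Since $\alpha_n=1-1/k_n$, we have $\lfloor k_n s\rfloor\log\alpha_n=-\lfloor k_n s\rfloor/k_n+O(s/k_n)$. Hence $\alpha_n^{\lfloor k_n s\rfloor}\to e^{-s}$. The convergence is uniform on every compact $[0,S]$, because $x\mapsto e^{-x}$ is Lipschitz and the error in the exponent is at most $(1+S)/k_n$. So the first term converges to $\mu(1-e^{-s})$ uniformly on compacts. In particular it converges in the Skorokhod topology on $D([0,\infty[)$, since the limit is continuous.

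\textbf{Stochastic part.} We view $X_{0,t}$, $t\le \lfloor k_n S\rfloor$, as a local-to-unity linear AR(1) with coefficient $1-1/k_n$ and effective sample size $k_n$. By Lemma~1 of \cite{phillips1987towards} (the functional CLT for near-integrated processes, applied with the sample size replaced by $k_n$),
$$
\frac{X_{0,\lfloor k_n s\rfloor}}{\sqrt{k_n}}\Rightarrow \sigma J_{-1}(s)=\sigma\int_0^s e^{-(s-r)}\,\mathrm{d}W_r ,
$$
an Ornstein--Uhlenbeck process, on each $D([0,S])$. This process is tight, so $\sup_{s\le S}|X_{0,\lfloor k_n s\rfloor}|=O_p(\sqrt{k_n})$. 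Dividing by $k_n$ gives $\sup_{s\le S}|X_{0,\lfloor k_n s\rfloor}/k_n|=O_p(k_n^{-1/2})=o_p(1)$.

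\textbf{Obstacle.} The supremum bound on the stochastic part is the only step that needs any care. As a sanity check independent of Phillips's lemma, we could use Doob-type maximal inequalities for the martingale $M_t=\sum_{j\le t}\alpha_n^{-j}\epsilon_j$, whose variance is $O(k_n)$ uniformly in $t\le k_nS$. We write $X_{0,t}=\alpha_n^t M_t$ and note that $\alpha_n^{-t}\le e^{S+o(1)}$ for $t\le k_n S$. This gives $\mathbb{E}\sup_{t\le k_nS}X_{0,t}^2=O(k_n)$ directly.

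\textbf{Conclusion.} Combining the two parts: the sum of a term converging uniformly on compacts to a continuous deterministic limit and a term that is uniformly $o_p(1)$ on compacts converges to that limit. Therefore $X_{\lfloor k_n s\rfloor}/k_n\Rightarrow \mu(1-e^{-s})$ in $D([0,S])$ for every $S$, hence in $D([0,\infty[)$. Because the limit is deterministic, the convergence also holds in probability.
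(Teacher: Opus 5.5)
Your proposal is correct and follows the same route as the paper: the iterated decomposition $X_t=\mu k_n(1-\alpha_n^t)+X_{0,t}$ from \cite{fei2018limit}, with the stochastic part handled by Lemma~1 of \cite{phillips1987towards} on the $k_n$ time scale, so that $X_{0,\lfloor k_n s\rfloor}/k_n=O_p(k_n^{-1/2})$ uniformly on compacts. You also supply details the paper leaves implicit: uniform convergence of $\alpha_n^{\lfloor k_n s\rfloor}$ to $e^{-s}$ on compacts, and a self-contained Doob bound $\mathbb{E}\sup_{t\le k_nS}X_{0,t}^2=O(k_n)$ that avoids invoking Phillips's lemma with a drifting $c_n\to-1$.
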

The deterministic limit converges rapidly to $\mu$ as $s$ increases. This feature contrasts with its affine counterpart, whose weak limit is the diffusion process $(\Upsilon_s)_s$ and thus allows for non-trivial variation. Indeed, even though both models share the same mean dynamics $(\mathbb{E}[X_t])_t$, their asymptotic behaviors differ substantially. For the affine model, Propositions~\ref{scalinglimit} and \ref{stationaritycir} show that, for both mildly stationary and mildly explosive affine processes, the limiting distribution of $\left(\frac{ X_{\lfloor k_n s \rfloor }}{k_n}\right)$ is a nondegenerate gamma random variable. In particular, compared to our affine model, the mildly stationary AR(1) model with intercept exhibits substantially lower variability beyond the first few observations. 

As an illustration, Figure~\ref{fig:mild.int} displays sample paths of  a mildly stationary INARCH process and a mildly integrated AR(1) process with intercept. For both processes, we set \(k_n=100, n/k_n=100\), \(\mu=1\), \(\sigma=1\), and \(\gamma=-1\). The mildly integrated AR(1) process quickly rises from 0 toward its long-run mean and subsequently fluctuates only mildly around it. In contrast, the mildly stationary INARCH process exhibits substantially greater variability and explores a much wider range of values.
\begin{figure}[H]
    \centering
    \includegraphics[scale = 0.2]{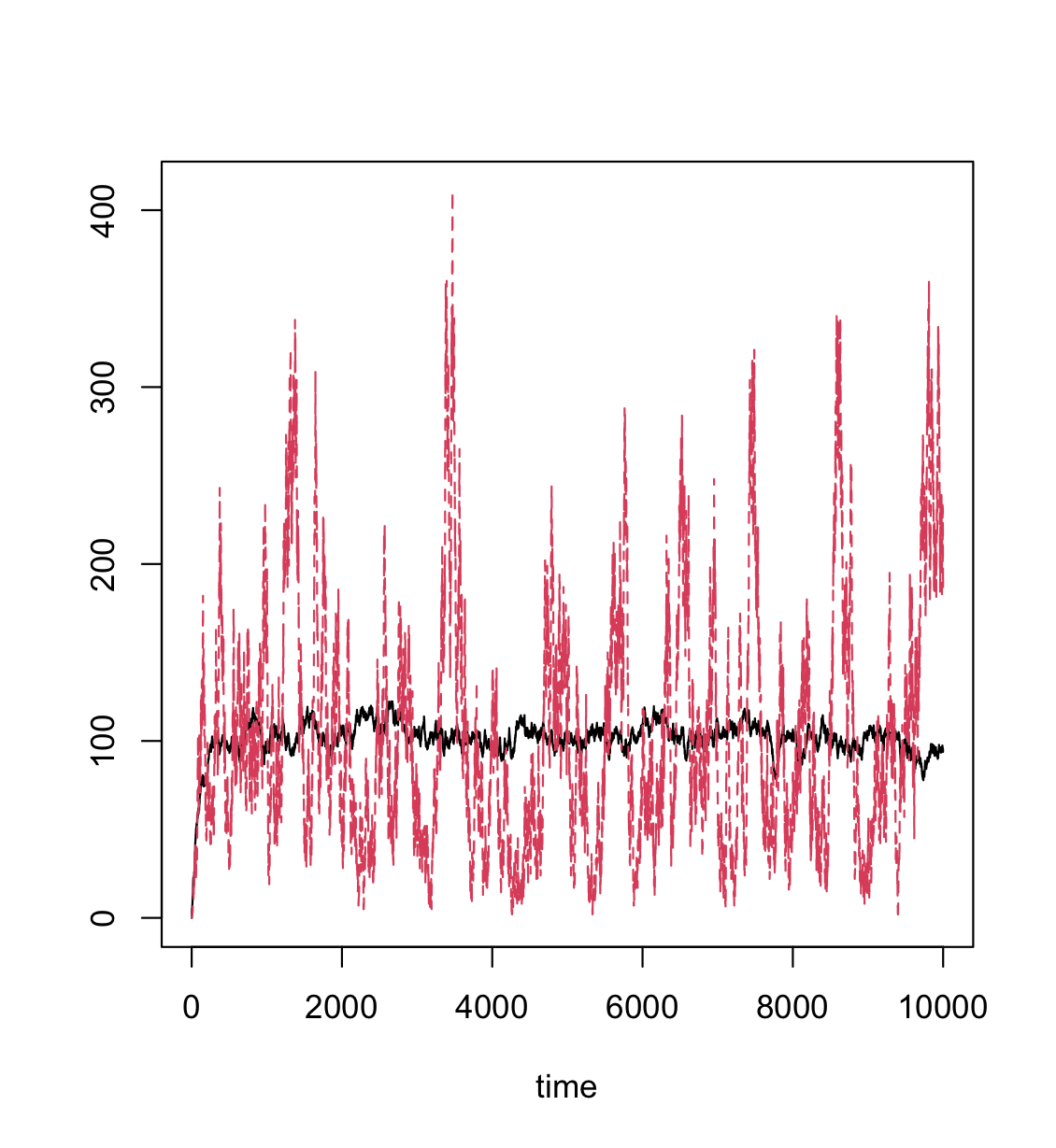}
    \caption{Sample paths: mildly stationary INARCH process (red dashed line) and mildly stationary AR(1) process with intercept (black solid line).}
    \label{fig:mild.int}
\end{figure}

%\begin{remark}One natural question for model \ref{feilm} is whether or not once the dominant term is removed, the error term behaves like a stationary process. For instance, in the mildly stationary case, whether $\frac{ X_{\lfloor k_n s \rfloor }}{k_n}-\mu$ could become stationary upon proper scaling. The answer to this question is negative. \end{remark}

%One can also get a second order refinement of this result. For instance, in the mildly stationary case, \cite[Proposition 4.1]{ispany2003asymptotic} show that $\frac{ X_{\lfloor k_n s \rfloor }}{k_n}-\mu$ is approximately a Gaussian distributed, with mean of order $e^{-ts}$, and variance of order $\frac{1}{k_n}$. Thus, $$X_{\lfloor k_n s \rfloor} \approx k_n (\mu+ O(\frac{1}{\sqrt{k_n}})), \qquad \forall s>0,$$  
The same distinction appears on the mildly explosive side. In the AR(1) model with intercept, the first-order explosive limit is deterministic, whereas in the affine model the corresponding limit is random:
%We now compare the mildly explosive affine process with the mildly explosive AR(1) process. For the latter, we have: %\footnote{Note that \cite[Theorem 2.6 (a)]{fei2018limit} contains a typographical error: the authors state that the weak limit is a constant, whereas their proof in the online appendix correctly shows that the limit is Gaussian.}  
%
\begin{prop}[\cite{fei2018limit}, Theorem 2.6 a)]
\label{feilm2}
    In model \eqref{feimodel}, and under the mildly explosive assumption $\alpha_n=1+\frac{1}{k_n}$, the rescaled process $\frac{ X_{\lfloor k_n s \rfloor }}{k_n e^{s}}$ converges weakly to the deterministic process $\mu(1-e^{-s})$ as $n$ increases to infinity. 
 \end{prop}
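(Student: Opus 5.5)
The plan is to use the explicit solution of the linear recursion and show that, after rescaling, the deterministic part converges uniformly on compacts to $\mu(1-e^{-s})$ while the stochastic part vanishes uniformly in probability. Iterating \eqref{feimodel} from $X_0=0$ with $\alpha_n=1+\frac{1}{k_n}$, as in the mildly stationary case, gives
\[
X_t=\mu\,\frac{\alpha_n^t-1}{\alpha_n-1}+\sum_{j=1}^t \alpha_n^{t-j}\epsilon_j
= \mu k_n(\alpha_n^t-1)+\alpha_n^{t}M_t,
\qquad M_t:=\sum_{j=1}^t \alpha_n^{-j}\epsilon_j .
\]
Since $(X_{\lfloor k_n s\rfloor}/(k_ne^s))$ is a process, weak convergence to a \emph{continuous deterministic} limit in $D([0,S])$ follows from $\sup_{s\le S}$ convergence in probability, for every fixed $S>0$. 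So it suffices to treat the two terms separately.

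\textbf{Deterministic part.} Write $t=\lfloor k_n s\rfloor$. We have
\[
\alpha_n^{\lfloor k_n s\rfloor}=\exp\bigl(\lfloor k_n s\rfloor \log(1+1/k_n)\bigr)
\]
and $\lfloor k_n s\rfloor\log(1+1/k_n)=s+O(1/k_n)$ uniformly on $[0,S]$. Hence $\alpha_n^{\lfloor k_n s\rfloor}e^{-s}\to 1$ uniformly on $[0,S]$. It follows that
\[
\sup_{s\le S}\Bigl|\frac{\mu k_n(\alpha_n^{\lfloor k_n s\rfloor}-1)}{k_ne^s}-\mu(1-e^{-s})\Bigr|\to 0 .
\]

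\textbf{Stochastic part.} The sequence $(M_t)$ is a martingale in $t$ (for fixed $n$) with
\[
\mathbb{E}[M_t^2]=\sigma^2\sum_{j\le t}\alpha_n^{-2j}\le \sigma^2 t .
\]
By Doob's maximal inequality, $\sup_{t\le k_nS}|M_t|=O_p(\sqrt{k_n})$. Moreover, $\alpha_n^{\lfloor k_n s\rfloor}/e^s$ is bounded uniformly in $s\le S$ and $n$, by the previous step. Therefore
\[
\sup_{s\le S}\frac{|\alpha_n^{\lfloor k_n s\rfloor}M_{\lfloor k_n s\rfloor}|}{k_ne^s}=O_p(k_n^{-1/2})\to 0 .
\]

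Combining the two parts yields uniform convergence in probability on each $[0,S]$ to $\mu(1-e^{-s})$, hence weak convergence in $D([0,\infty[)$.

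The only delicate point is the uniformity in $s$ of the stochastic term. A pointwise variance bound of order $k_n(e^{2s}-1)/(k_n^2e^{2s})$ is not enough on its own for convergence in $D$. This is why I factor out $\alpha_n^t$ to expose a martingale and apply Doob's inequality, rather than working directly with the non-martingale sum $\sum_j\alpha_n^{t-j}\epsilon_j$.
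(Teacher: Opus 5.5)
Your proof is correct and follows essentially the route the paper relies on: the paper gives no argument of its own, citing \citet{fei2018limit}. That source, like the paper's treatment of the mildly stationary analogue, rests on the explicit solution $X_t=\mu k_n(\alpha_n^t-1)+\sum_{j\le t}\alpha_n^{t-j}\epsilon_j$, whose driftless mildly explosive part is $O_p(\sqrt{k_n}\,\alpha_n^t)$ and therefore vanishes after division by $k_n e^s$. Your factorization $\alpha_n^t M_t$ combined with Doob's maximal inequality is a useful addition: it supplies the uniformity in $s$ on compacts that a genuine functional statement in $D([0,\infty[)$ requires, and which the paper, relying on the citation, does not spell out.
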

% CHECK THE ABOVE. 
%Thus, even though in model \eqref{feimodel} the mean process $\mathbb{E}[X_t]$ increases exponentially, the probability that $X_t$ takes negative values does not vanish as $t$ increases to infinity. %In other words, a mildly explosive AR(1) model with positive intercept has a positive probability of featuring a negative bubble. 
%This property is undesirable, if the process of interest is expected to remain nonnegative for some economic reasons such as no-arbitrage. This should be compared with the mildly explosive affine process, 
On the other hand, Propositions \ref{scalinglimit} and \ref{stationaritycir} lead to the following property:
     \begin{prop}
     In the mildly explosive affine process, $\frac{X_{\lfloor{ k_n s\rfloor}}}{k_n e^s}$ converges, as $n$ increases to infinity, to the process $\Upsilon_s/e^s$. 
    \end{prop}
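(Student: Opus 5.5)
The claim follows from Proposition~\ref{scalinglimit} by the continuous mapping theorem. The precise statement we prove is that the process $\left(\frac{X_{\lfloor k_n s\rfloor}}{k_n e^{s}}, s\ge 0\right)$ converges weakly in $D([0,\infty[)$ to $(\Upsilon_s e^{-s}, s\ge 0)$, where $(\Upsilon_s)$ solves \eqref{sde} with $\gamma=1$ and $\Upsilon_0=0$.

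\textbf{Step 1.} Under Assumptions~\ref{ass1}, \ref{Assbeta}, \ref{assumptionmildly} (with $\gamma>0$, normalized to $1$) and \ref{AssAffineMoment}, with $X_0=o(k_n)$, Proposition~\ref{scalinglimit} gives
\[
Y^{(n)}_s := \frac{X_{\lfloor k_n s\rfloor}}{k_n} \Rightarrow \Upsilon_s \quad \text{in } D([0,\infty[).
\]

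\textbf{Step 2.} Define the map $\Phi: D([0,\infty[)\to D([0,\infty[)$ by $\Phi(y)(s)=e^{-s}y(s)$. Multiplication by a fixed continuous function is continuous for the Skorokhod $J_1$ topology on $D([0,\infty[)$. To see this, it suffices to check continuity on each compact interval $[0,T]$, where $e^{-s}$ is bounded and uniformly continuous. If $y_m\to y$ in $J_1$ through time changes $\lambda_m$ with $\|\lambda_m-\mathrm{id}\|_\infty\to 0$, then
\[
\sup_{s\le T}\left|e^{-\lambda_m(s)}y_m(\lambda_m(s)) - e^{-s}y(s)\right| \le \sup_{s\le T}\left|y_m(\lambda_m(s))-y(s)\right| + \sup_{s\le T}|y(s)|\,\sup_{s\le T}\left|e^{-\lambda_m(s)}-e^{-s}\right|,
\]
and both terms tend to zero. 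Hence $\Phi$ is continuous everywhere, and in particular at the limit. Moreover, the limit $\Upsilon$ has continuous paths, so $J_1$ convergence to it is locally uniform convergence, which simplifies the argument further.

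\textbf{Step 3.} The continuous mapping theorem applied to Step 1 gives $\Phi(Y^{(n)})\Rightarrow \Phi(\Upsilon)=(\Upsilon_s e^{-s})_s$. Since $\Phi(Y^{(n)})(s)$ is exactly $\frac{X_{\lfloor k_n s\rfloor}}{k_n e^s}$, this is the claim.

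The main subtlety is interpretive rather than technical. The convergence is functional convergence on $D([0,\infty[)$, that is, uniform on compacts in $s$. It is not a statement uniform over the whole horizon $[0,n/k_n]$, which diverges. For fixed $s$, or in finite-dimensional distributions, the result is immediate from the above. If one also wants to connect the limit with Proposition~\ref{stationaritycir} ii), note that $\Upsilon_s e^{-s}\to Z$ almost surely as $s\to\infty$. Turning this into a statement about $s=n/k_n\to\infty$ would require interchanging the limits $n\to\infty$ and $s\to\infty$, which the present statement does not assert.
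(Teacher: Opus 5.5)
Your argument is correct and follows the paper's route. The paper states this as an immediate consequence of the scaling limit in Proposition~\ref{scalinglimit} (with $\gamma=1$) and gives no separate proof; in effect it is the continuous mapping theorem applied to $y\mapsto e^{-s}y(s)$, as you do. Your caveat about not interchanging $n\to\infty$ and $s\to\infty$ is also apt: the paper handles the endpoint $s=n/k_n$ separately, through the Laplace-transform argument in Lemma~\ref{lemma3}$(i)$.
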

    Note that this latter process converges, as $s$ increases to infinity, to the random variable $Z$ with mean $\mu$.
%Thus, in the mildly explosive case, the affine model ensures that the scaling limit is positive, in contrast to the AR(1) model  \eqref{feimodel}. %Thus, the affine process could be more suitable for applications in which the positivity of the variable $X_t$ is important. 

 %This comparison reinforces the same qualitative point: affine models retain state-dependent stochastic variation after the natural normalization, whereas the linear AR(1) model with intercept is dominated by its deterministic component.

To summarize, affine models retain state-dependent stochastic variation after the natural normalization, whereas the linear AR(1) model with intercept is essentially deterministic upon normalization. 

\subsection{Local bubble-like episodes under mild stationarity}

%\subsection{Spurious local explosion of a mildly stationary process}
%The preceding comparison suggests that the main advantage of the affine specification is not the mildly explosive case per se. Rather, it is that a mildly stationary affine process can display substantial local variation while remaining globally mean-reverting. This feature is useful for applications in which the observed series exhibits recurrent episodes of rapid growth without a permanently explosive data-generating mechanism.

\label{sec:spurious}

The mildly explosive AR(1) model is widely used in the empirical literature on (log) asset-price bubbles
\citep{phillips2011dating, phillips2015testing,
phillips2015testinglimit, yu2025inference, kejriwal2025improved}.
However, this modeling framework has limitations for the type of data that we are interested in. In particular, for count data, the log transform might not be well defined; for some positive processes such as interest rates, an explosion might also seem implausible. 

%First, without an intercept or positivity restriction, a linear explosive process may generate negative trajectories with positive probability, which is difficult to reconcile with nonnegative asset prices. Second, mildly explosive specifications naturally describe a single explosive episode; in applications with multiple boom-and-bust episodes, the literature therefore typically applies the model locally to selected subsamples.

%We argue that such local evidence of explosiveness can be misleading when the global DGP is a mildly stationary affine process. 
We argue that for nonnegative affine processes, bubble-like patterns can also arise in the mildly stationary case, that is, even without a locally explosive slope. Rather, when $X_{t-1}$ is large, because the conditional variance $\mathbb{V}[X_{t}|X_{t-1}]=\beta_n X_{t-1}+\delta_n$ is also large, the next value $X_t$ has a significant probability of exceeding $X_{t-1}$. At the same time, since $\alpha_n<1$ in the mildly stationary case, the level of $X_t$ will eventually decrease. This leads to bubble-like episodes with rise, peak and decline. This is exactly what we observe in Figure 1, in which we see roughly 10 bubbles. A simple simulation shows that  for the mildly stationary affine process simulated in Figure 1, if we split the full sample into 10 blocks of $1000=10k_n$ observations each, then the probability that at least one block has a block maximum above \(3\) times the marginal mean $\frac{\mu_n}{1-\alpha_n}$ is 99.8\%. Yet, for blocks whose block maximum exceeds this threshold, 99.6\% still have a local OLS estimate \(\hat\alpha_{\text{block}}\le 1\). For the mildly stationary linear AR(1) comparator in
Figure~\ref{fig:mild.int}, which has the same \(\alpha_n\) but constant
innovation variance, the probability of observing at least one block maximum
above the same threshold is below \(0.1\%\).

We are \emph{not} claiming that mildly stationary affine processes should replace mildly explosive linear AR(1) models in applications to asset-price bubbles. The point is instead that the two frameworks are suited to different empirical objects. Mildly explosive models are natural when sustained explosive growth is economically or physically plausible, as in some asset-price applications. By contrast, many nonnegative series considered in this paper, such as short-term interest rates and disaster frequencies, cannot plausibly follow a globally explosive path over long horizons. For such series, the mildly stationary affine model provides a way to generate bubble-like rise--peak--decline episodes while preserving global mean reversion and the relevant state-space constraint.

\section{Inference under mild integration}
\label{sec:5inferencemild}
\subsection{Large sample theory for the OLS estimator}
Let us now consider the OLS estimators of $\alpha_n$ and $\mu_n$ in the mildly integrated case.

To establish the analog of Theorem~\ref{thm1}, we need to strengthen Assumption~\ref{AssAffineMoment}. 
\begin{Ass}
\label{AssAffineMomentBis}
Assumption~\ref{AssAffineMoment} holds with some exponent $p>3$.
\end{Ass}
The stronger moment requirement is needed because, in the mildly stationary regime, the limit theory involves the diffusion on the unbounded time domain $[0,\infty)$ rather than on a compact interval, and uniform-in-time moment control becomes necessary.
%Intuitively, this stronger assumption is required for establishing large sample properties, because in the mildly stationary case, the scaling limit now involves the trajectory of the diffusion process on the whole domain $(0, \infty)$, rendering some uniformity conditions necessary. 

This leads to the following analog of Theorem~\ref{thm1}:

\begin{theo}
\label{cls}
Under Assumptions~\ref{ass1}, \ref{Assbeta}, \ref{assumptionmildly}, \ref{AssAffineMomentBis} and the initial condition $X_0 = o(k_n)$, the following holds:
\begin{enumerate}[$a)$]
 %   \item The rescaled process $(Y_s, s>0)=(\frac{X_{\lfloor{ k_n s\rfloor}}}{k_n},s>0)$ converges to the diffusion defined in \eqref{sde}, with parameter $\gamma=1$ if positive and $\gamma=-1$ if negative.
    \item If $\gamma<0$ (hence $\gamma=-1$), then the OLS estimator is consistent and asymptotically normal:
\begin{equation}
\begin{bmatrix}
\sqrt{nk_{n}}(\hat{\alpha}_{n}-\alpha _{n}) \\
\sqrt{n/k_n}(\hat{\mu}_n-\mu_n)
\end{bmatrix}
\overset{w}{%
\longrightarrow } \mathcal{N}(0, \Omega^{-1} \Sigma \Omega^{-1})
%\begin{bmatrix}
%\frac{2\gamma}{\mu} (\gamma U_1-\mu U_2)\\
%(1+2\mu) U_2-2 \gamma U_1
%\end{bmatrix},
\label{limitalpha}
\end{equation}
%where random variables $U_1, U_2$ follow jointly the Gaussian distribution with a zero mean vector and covariance matrix 
where matrices $\Omega$ and $\Sigma$ are given by:
\begin{equation}
\Omega=  \begin{bmatrix}
  \mathbb{E}[\Upsilon^2_{\infty}] & \mathbb{E}[\Upsilon_{\infty}] \\
  \mathbb{E}[\Upsilon_{\infty}]& 1
\end{bmatrix}, \qquad 
\Sigma=\sigma^2 \begin{bmatrix}
  \mathbb{E}[\Upsilon^3_{\infty}] & \mathbb{E}[\Upsilon^2_{\infty}] \\
  \mathbb{E}[\Upsilon^2_{\infty}]& \mathbb{E}[\Upsilon_{\infty}]
\end{bmatrix},
\end{equation}
where $\mathbb{E}[\Upsilon_{\infty}]$, $\mathbb{E}[\Upsilon^2_{\infty}]$ and $ \mathbb{E}[\Upsilon^3_{\infty}]$ are the first three moments of the invariant distribution of the diffusion process defined in \eqref{sde} with $\gamma=-1$. This invariant distribution is gamma with shape parameter $\frac{2\mu}{\sigma^2}$ and rate parameter $\frac{2}{\sigma^2}$. Its moments are therefore given by: 
$$\mathbb{E}[\Upsilon^1_{\infty}]={\mu}
%{\gamma}
,  \qquad  \mathbb{E}[\Upsilon^2_{\infty}]=\mu\left(\mu+\frac{\sigma^2}{2}\right)
%\frac{2\mu^2+\mu}{2\gamma^2}
, \qquad \mathbb{E}[\Upsilon^3_{\infty}]= \mu\left(\mu+\frac{\sigma^2}{2}\right)(\mu+\sigma^2)
%\frac{2\mu^3+3\mu^2+\mu}{2\gamma^3}
.$$
In particular, we have: $
 \sqrt{{n}k_n}(\hat{\alpha}_{n}-\alpha _{n})  \overset{w}{\longrightarrow} \mathcal{N}\left(0, 2\left(\frac{\sigma^2}{\mu}+1\right)\right).$

\item If $\gamma>0$ (hence $\gamma=1$), then $\hat{\alpha}_n$ admits the following non-Gaussian limit:
$$k_n \alpha_n^{n/2}  (\hat{\alpha}_n-\alpha_n) \overset{w}{\longrightarrow} \frac{2}{\sqrt{3}}\sigma Z^{-1/2} Y,$$
where $Y\sim\mathcal{N}(0,1)$, $Z$ is the limiting gamma variable in Proposition~\ref{stationaritycir}, and $Y$ and $Z$ are independent. By contrast, $\hat{\mu}_n$ is inconsistent, and $\mathbb{V}(\hat{\mu}_n-\mu_n)$  explodes as $n$ increases to infinity. 
\end{enumerate}
\end{theo}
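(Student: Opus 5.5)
Write $\varepsilon_t := X_t-\alpha_n X_{t-1}-\mu_n$, a martingale difference sequence with conditional variance $\beta_n X_{t-1}+\delta_n$. The OLS error has the standard representation
\[
\begin{bmatrix}\hat\alpha_n-\alpha_n\\ \hat\mu_n-\mu_n\end{bmatrix}
=\begin{bmatrix}\sum X_{t-1}^2 & \sum X_{t-1}\\ \sum X_{t-1} & n\end{bmatrix}^{-1}
\begin{bmatrix}\sum X_{t-1}\varepsilon_t\\ \sum \varepsilon_t\end{bmatrix}.
\]
The plan is to normalize this with $D_n=\mathrm{diag}(k_n\sqrt{n},\sqrt{n})$ on the design side and with $\mathrm{diag}(k_n^{3/2}\sqrt n, k_n^{1/2}\sqrt n)$ on the score side. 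The mismatch between the two normalizations produces exactly the rates $\sqrt{nk_n}$ and $\sqrt{n/k_n}$.

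\emph{Case $\gamma<0$.} The argument has three steps.

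\textbf{Step 1 (law of large numbers).} We want
\[
\frac{1}{nk_n^j}\sum_{t=1}^n X_{t-1}^j \to \mathbb{E}[\Upsilon_\infty^j] \quad\text{in probability, for } j=1,2,3.
\]
The block interpretation motivates this: by Proposition~\ref{scalinglimit}, $\frac1{nk_n^j}\sum X_{t-1}^j\approx \frac{k_n}{n}\int_0^{n/k_n} Y_s^j\,ds$, and Proposition~\ref{stationaritycir} gives ergodicity of $\Upsilon$. Weak convergence on compacts does not transfer to a growing horizon, however, so I would argue directly with moments instead. Using the affine recursions \eqref{eq1}--\eqref{eq2}, the first three moments $m_j(t)=\mathbb{E}[X_t^j]$ obey linear recursions. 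Hence $m_j(t)/k_n^j\to\mathbb{E}[\Upsilon_\infty^j]$ uniformly over $t\ge Ck_n$, and the burn-in of length $o(n)$ is negligible. The variance of the sums I would bound through the covariance decay $\mathrm{Cov}(X_s^j,X_t^j)\lesssim k_n^{2j}\alpha_n^{|t-s|}$, obtained from the affine conditional moments being polynomial in $X_{s}$ with a geometric factor. This gives a variance of order $n k_n\cdot k_n^{2j}$ for $\sum X_{t-1}^j$, which is $o(n^2k_n^{2j})$ since $k_n/n\to0$. For $j=3$ this needs sixth moments; I would instead use truncation together with Assumption~\ref{AssAffineMomentBis} ($p>3$) and a uniform bound $\sup_t\mathbb{E}[(X_t/k_n)^{p}]<\infty$. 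I expect this to be the main obstacle: obtaining uniform-in-time moment control and the ergodic averaging over the diverging horizon $n/k_n$ under only $p>3$ moments.

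\textbf{Step 2 (martingale CLT).} Apply the Cramér--Wold device and the martingale CLT to
\[
\left(\frac{\sum X_{t-1}\varepsilon_t}{k_n^{3/2}\sqrt n},\ \frac{\sum\varepsilon_t}{k_n^{1/2}\sqrt n}\right).
\]
The conditional variance sums are $\sum X_{t-1}^{2}(\beta_nX_{t-1}+\delta_n)/(nk_n^3)$, and similarly for the other entries. By Step 1 these converge to $\Sigma$; the $\delta_n$ terms are lower order. For the Lindeberg condition I would use a Lyapunov bound with exponent $p/2>1$ (or $p$): conditional $p$-th moments of $\varepsilon_t$ are $O(X_{t-1}^{p/2}+1)$ by Rosenthal's inequality for the infinitely divisible / branching representation, and $\mathbb{E}|X_{t-1}|^{p/2+\cdot}=O(k_n^{\cdot})$ makes the Lyapunov ratio $O(n^{1-p/2}\cdot\ldots)\to0$.

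\textbf{Step 3 (combination).} Combine Steps 1 and 2 by Slutsky's lemma to obtain $\mathcal N(0,\Omega^{-1}\Sigma\Omega^{-1})$. The scalar variance $2(\sigma^2/\mu+1)$ then follows from the gamma moments by direct algebra.

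\emph{Case $\gamma>0$.} Here $X_t\approx k_n\alpha_n^t Z_n$ with $Z_n\to Z$ almost surely, in the sense of convergence of the $L^2$ martingale $X_t/(k_n\alpha_n^t)$ after centering. Then
\[
\sum X_{t-1}^2\approx k_n^2 Z^2\frac{\alpha_n^{2n}k_n}{2},
\]
while $\sum X_{t-1}\approx k_n^2 Z\alpha_n^n$ and $n$ are of smaller order after Schur complementation. The score $\sum X_{t-1}\varepsilon_t$ has conditional variance
\[
\approx \sigma^2\sum X_{t-1}^3\approx \sigma^2k_n^3Z^3\alpha_n^{3n}\frac{k_n}{3}.
\]
I would apply a martingale CLT with mixing (stable convergence), so that the Gaussian factor $Y$ is independent of $Z$; the dominant terms come from the last $O(k_n)$ observations, which is where the independence comes from. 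The ratio gives
\[
k_n\alpha_n^{n/2}(\hat\alpha_n-\alpha_n)\to \frac{\sqrt{\sigma^2Z^3/3}}{Z^2/2}\,Y=\frac{2\sigma}{\sqrt3}Z^{-1/2}Y.
\]
For $\hat\mu_n$, the expression $\hat\mu_n-\mu_n=\bar\varepsilon-(\hat\alpha_n-\alpha_n)\bar X$ contains $\bar X\sim k_n^2\alpha_n^n/n$. Multiplied by $(\hat\alpha_n-\alpha_n)\sim k_n^{-1}\alpha_n^{-n/2}$, this term diverges, which shows inconsistency and an exploding variance.
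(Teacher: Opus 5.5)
Your overall architecture is the paper's: normal equations; an LLN for $\frac{1}{nk_n^j}\sum X_{t-1}^j$, $j=1,2,3$; a martingale CLT whose conditional covariance converges to $\Sigma$; and Slutsky. For $\gamma>0$ you likewise use $X_t\approx k_n\alpha_n^tZ$, terminal-window sums and a stable MCLT. Identifying the limiting constants through the triangular moment recursion is a legitimate, more elementary alternative to the paper's route, which goes through tightness of the invariant laws $\pi_n$, Proposition~\ref{scalinglimit} and uniqueness of the CIR invariant law.

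\textbf{The gap is the concentration half of Step~1.} The bound $\mathrm{Cov}(X_s^j,X_t^j)\lesssim k_n^{2j}\alpha_n^{|t-s|}$ needs moments of order $2j$. Under $p>3$ it therefore fails not only for $j=3$ (sixth moments) but already for $j=2$ (fourth moments). Truncation does not repair this as you describe it. Once $y^j$ is replaced by $y^j\wedge R$, the summands are no longer polynomials in $X_{t-1}$, so the affine moment recursion gives no covariance decay for them. Meanwhile $\sup_t\mathbb{E}[(X_t/k_n)^p]<\infty$ only controls the part above $R$. A martingale decomposition of $\sum X_{t-1}^3$ has the same problem: its leading term $3\sum m_t^2W_t$, with $m_t=\alpha_nX_{t-1}+\mu_n$, has conditional variance of order $\sum X_{t-1}^5$. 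Since the conditional variance of your first score component is essentially $\sigma^2 n^{-1}\sum Y_{n,t-1}^3$, Steps~2--3 depend on this point.

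\textbf{The missing ingredient} is a geometric mixing bound for bounded Lipschitz functionals, $|\mathrm{Cov}(h(Y_{n,s}),h(Y_{n,s+\ell}))|\le C_h\alpha_n^{\ell}$, which is what the paper uses. It gives $\mathrm{Var}\bigl(\frac1n\sum_t h(Y_{n,t-1})\bigr)=O(k_n/n)$. Applying this to $h=y^j\wedge R$, and removing the tail with the uniform $p$-th moment bound, completes the LLN. You can obtain the mixing bound from a synchronous coupling. Drive two copies with the same offspring variables and immigration in \eqref{branching}; in the continuous case, use the same subordinator with Laplace exponent $a_n$. Then $\mathbb{E}[|X_t-X_t'|\mid\mathcal F_{t-1}]=\alpha_n|X_{t-1}-X_{t-1}'|$, i.e.\ Wasserstein contraction at rate $\alpha_n$.

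\textbf{Two smaller points.}

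First, in Step~2 a Lyapunov exponent $p$ on $X_{t-1}\varepsilon_t$ would require $\mathbb{E}X_{t-1}^{3p/2}$, which is not available. Use $|\cdot|^q$ with $2<q\le 2p/3$ instead. This gives a bound of order $O(n^{1-q/2})$, and it is exactly where $p>3$ enters.

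Second, in part~$b)$, $\bar\varepsilon$ is of the same order $\frac{k_n}{n}\alpha_n^{n/2}$ as $(\hat\alpha_n-\alpha_n)\bar X$. So the divergence of $\hat\mu_n$ does not follow from the second term alone, since the two could cancel. You need the joint mixed-normal limit of both score components. This gives $\hat\mu_n-\mu_n\approx\sigma\frac{k_n}{n}\alpha_n^{n/2}Z^{1/2}(G_2-2G_1)$, where $\mathrm{Var}(G_1)=1/3$, $\mathrm{Var}(G_2)=1$ and $\mathrm{Cov}(G_1,G_2)=1/2$. Hence $\mathrm{Var}(G_2-2G_1)=1/3>0$, as in the paper's computation.
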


Note that the distribution of $Z^{-1/2} Y$ belongs to the class of normal-inverse-Gaussian mixtures \citep{barndorff1997normal}.

\begin{proof}
    See Appendix~\ref{proofclsa} for Part $a)$ and Appendix~\ref{proofclsb} for Part $b)$.
\end{proof}
%Thus, we have the following table for the convergence rate of the parameters in the different cases:\begin{table}[H]    \centering    \begin{tabular}{|c|c|c|c|c|c|c|c}    \hline &   Stationary & mildly stationary & nearly stationary & nearly explosive     & mildly explosive  \\&     $\alpha_n=\alpha_0<1$  & $\alpha_n =1-\frac{1}{k_n}$ & $\alpha_n =1+\frac{\gamma}{n}, \gamma<0$ & $\alpha_n =1+\frac{\gamma}{n}, \gamma>0$     & $\alpha_n =1+\frac{1}{k_n}$   \\ \hline$\hat{\alpha}_n$ & $\sqrt{n}$  & $\sqrt{nk_n}$ & $n$ & $n$& $k_n e^{\frac{n}{2k_n}}$ \\ \hline$\hat{\mu}$ & $\sqrt{n}$  & $\sqrt{n/k_n}$  & $O(1)$ & $O(1)$ & NA \\ \hline\end{tabular}\caption{Convergence rate of $\alpha$ and $\mu$ under different settings.}    \label{tab:}\end{table}
%As we move from the left (closer to stationarity) to the right (closer to explosion), the term $X_{t-1}$ becomes more and more dominant in the conditioning mean $\mathbb{E}[X_{t} \mid X_{t-1}] =\alpha_n X_{t-1}+\mu$, and the convergence rate of $\hat{\alpha}_n$ (resp. $\hat{\mu}$) becomes quicker and quicker (resp. slower and slower).  

Theorem~\ref{cls}(a) provides one of the main motivations for the mildly stationary framework. In contrast to the local-to-unity case, both OLS estimators are consistent and jointly asymptotically normal.

As expected, the convergence rates $(\sqrt{nk_n}, \sqrt{n/k_n})$ for the two parameters bridge the $(\sqrt{n}, \sqrt{n})$ and $(n, 1)$ rates associated with the stationary and local-to-unity regimes, respectively. The former arises as a limiting case when $k_n$ diverges slowly, while the latter corresponds to the limiting case when $k_n$ grows at a rate close to that of $n$.

 \subsection{The block local-to-unity interpretation}
%\subsubsection{Mildly integrated as a scale and time changed local-to-unity model}

%The time and scale change argument also explains the convergence speed found in part $b)$ of Theorem 2. 
To relate the convergence rates obtained in Theorem~\ref{cls} $a)$ to those in Theorem~\ref{thm1}, consider partitioning the sample $X_t, t=1,\dots,n$ into non-overlapping $\left\lfloor \frac{n}{k_n} \right\rfloor$ blocks, each containing approximately $k_n$ observations. Each block then behaves as a local-to-unity model, but with different initial values.\footnote{For the first block, the initial value is 0, while for subsequent blocks, it is of order $k_n$.} Applying Theorem~\ref{thm1} to each block produces $\left\lfloor \frac{n}{k_n} \right\rfloor$ estimators of $(\alpha_n, \mu_n)$. Within each block, the estimator of $\alpha_n$ converges at rate $k_n$, while the estimator of $\mu_n$ is inconsistent. Assuming that the CLT can be applied — justified by the stationarity of the process for fixed $k_n$ — the sample average of these $\left\lfloor \frac{n}{k_n} \right\rfloor$ estimators converges at rate $k_n \times \sqrt{\left\lfloor \frac{n}{k_n} \right\rfloor} \approx \sqrt{n k_n}$ (respectively, $\sqrt{n/k_n}$) for the aggregated estimator of $\alpha_n$ (respectively, $\mu$). Part $a)$ of Theorem~\ref{cls} shows that the OLS estimator achieves the same convergence rates as this aggregated estimator. Moreover, although each block estimator has a non-Gaussian limiting distribution (by Theorem~\ref{thm1}), aggregation across blocks restores Gaussianity.

\subsection{Comparison with mildly integrated AR(1) model}
\paragraph{Comparison with mildly stationary AR(1) model.}
%It is instructive to apply the same block local-to-unity perspective, as proposed by \cite{phillips2010smoothing}, to the mildly stationary AR(1) with intercept.
%to both the mildly stationary AR(1) model with intercept. 
%Let us apply the same idea of section 3.2.2 to the mildly integrated AR(1) processes with intercept \eqref{feimodel}. 
%Let us first recall that in the exact unit root case $\alpha_n=1$, \cite[Page 497]{hamilton1994time} shows that the convergence rates of the OLS estimator $\hat{\alpha}_n$ and $\hat{\mu}$ are $n^{3/2}$ and $n^{1/2}$, respectively. 
%Recently, this result has also been extended by \cite[Theorem 3]{liu2023limit} to the case where $\alpha_n$ satisfies the local-to-unity Assumption 2.\footnote{See also \cite{ispany2003asymptoticproceeding, ispany2003asymptotic} for a similar result for local-to-unit INAR(1) processes.}
%Throughout the rest of this subsection, let us focus on model \eqref{feimodel} in the mildly stationary case, that is, under Assumption 3, with $\gamma=-1$. 
\cite{liu2023limit}\footnote{See also \cite[Theorem 4]{peng2024note} for a similar result for a mildly stationary INAR(1) model.} establish that:
\begin{prop}
In model \eqref{feimodel}, suppose that there exist constants $h_1, h_2 \in [0,1]$ such that 	
	\begin{equation}
		\label{assumption}
	\lim_{n \to \infty}	 \frac{\sqrt{n}}{\max(\sqrt{n}, k_n)} =h_1, \qquad  	\lim_{n \to \infty}	 \frac{k_n}{\max(\sqrt{n}, k_n)} =h_2.
		\end{equation}
	Then
	$ 
	\begin{bmatrix}
		\sqrt{k_n}\max(\sqrt{n},k_n) (\hat{\alpha}_n-\alpha_n) \\
	\frac{\max(\sqrt{n},k_n)}{\sqrt{k_n}} (\hat{\mu}_n-\mu_n)
	\end{bmatrix}
	$ converges weakly to a degenerate bivariate Gaussian vector. 
\end{prop}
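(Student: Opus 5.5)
The approach is to decompose $X_t$ into its deterministic mean path plus a zero-mean stochastic part and analyse the OLS normal equations in the usual way. With $X_0=0$ and $\alpha_n=1-1/k_n$, write $X_{t-1}=\mu k_n(1-\alpha_n^{t-1})+X_{0,t-1}$, where $X_{0,t}=\sum_{j\le t}\alpha_n^{t-j}\epsilon_j$. Then
\[
\begin{bmatrix}\hat\alpha_n-\alpha_n\\ \hat\mu_n-\mu_n\end{bmatrix}=\Big(\sum_t z_tz_t'\Big)^{-1}\sum_t z_t\epsilon_t, \qquad z_t=(X_{t-1},1)'.
\]
The main point is that $X_{t-1}$ is dominated by $\mu k_n$ (of order $k_n$), while its fluctuation $X_{0,t-1}$ is only of order $\sqrt{k_n}$. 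Consequently the design matrix is asymptotically singular, and this is the source of the degenerate limit.

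\textbf{Step 1: denominator.} I will reparametrise via the centered regressor $\tilde X_{t-1}=X_{t-1}-\bar X$ to remove the near-collinearity. By the results of \cite{phillips2007limit} for mildly stationary AR(1):
\begin{itemize}
\item $\sum_t X_{0,t-1}^2 \sim n k_n\sigma^2/2$;
\item the deterministic term contributes $\sum_t(\mu k_n\alpha_n^{t-1}-\overline{\cdot})^2\asymp \mu^2k_n^3$, since only the first $O(k_n)$ observations deviate from the limit $\mu k_n$;
\item the cross terms are negligible.
\end{itemize}
Hence $\sum\tilde X_{t-1}^2\asymp k_n\max(n,k_n^2)$, which in the normalisation of the statement is $k_n\max(\sqrt n,k_n)^2$.

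\textbf{Step 2: numerator.} The centered score is $\sum\tilde X_{t-1}\epsilon_t$. The stochastic piece $\sum X_{0,t-1}\epsilon_t$ is a martingale with variance of order $nk_n$, and is asymptotically normal by the martingale CLT. The deterministic piece $\mu k_n\sum(\alpha_n^{t-1}-\cdot)\epsilon_t$ is exactly Gaussian in the limit (Lindeberg), with variance of order $k_n^3$. The two are asymptotically uncorrelated, since one is concentrated on the first $O(k_n)$ observations. This gives
\[
\hat\alpha_n-\alpha_n=\frac{\sum\tilde X_{t-1}\epsilon_t}{\sum\tilde X_{t-1}^2}=O_p\Big(\big(\sqrt{k_n}\max(\sqrt n,k_n)\big)^{-1}\Big),
\]
and the limit is a Gaussian mixture with weights $h_1^2$ (stochastic part) and $h_2^2$ (deterministic part) in both numerator and denominator. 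The denominator limit is deterministic, so the ratio is Gaussian.

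\textbf{Step 3: intercept.} Use $\hat\mu_n-\mu_n=\bar\epsilon-(\hat\alpha_n-\alpha_n)\bar X_{-}$, where $\bar X_-\approx\mu k_n$. The term $(\hat\alpha_n-\alpha_n)\mu k_n$ is of order $\sqrt{k_n}/\max(\sqrt n,k_n)$, which dominates $\bar\epsilon=O_p(n^{-1/2})$. Therefore
\[
\frac{\max(\sqrt n,k_n)}{\sqrt{k_n}}(\hat\mu_n-\mu_n)=-\mu\,\sqrt{k_n}\max(\sqrt n,k_n)(\hat\alpha_n-\alpha_n)+o_p(1).
\]
The joint limit is thus supported on a line, which is the claimed degeneracy, and joint weak convergence follows from Cramér–Wold.

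\textbf{Main obstacle.} The hardest step is the uniform bookkeeping across the regimes $h_1,h_2\in[0,1]$, where the deterministic transient of length $k_n$ competes with the stochastic part. Specifically, one must prove that the cross terms $\sum \mu k_n\alpha_n^{t}X_{0,t}$ and the correction from $\bar\epsilon$ are negligible relative to $k_n\max(n,k_n^2)$ uniformly. I would handle this with explicit second-moment bounds using the geometric sums $\sum\alpha_n^{2t}\approx k_n/2$.
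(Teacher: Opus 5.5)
The paper does not prove this proposition; it quotes it from \cite{liu2023limit} as a contrast with the affine case. Your argument therefore has to stand on its own, and in substance it does.

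Your bookkeeping checks out. Set $a_n=n^{-1}\sum_{t=1}^n\alpha_n^{t-1}=O(k_n/n)$. The transient contributes $\mu^2k_n^2\sum_t(\alpha_n^{t-1}-a_n)^2\approx\mu^2k_n^3/2$, and the stochastic part contributes $nk_n\sigma^2/2$. The cross term $\mu k_n\sum_t(\alpha_n^{t-1}-a_n)X_{0,t-1}$ is $O_p(k_n^{5/2})=o_p(k_n^2\sqrt n)$. That is negligible against the geometric mean of the two main terms, hence in every regime. You also have $\bar X_-/k_n\overset{p}{\rightarrow}\mu$ and $\frac{\max(\sqrt n,k_n)}{\sqrt{k_n}}\,\bar\epsilon=O_p\bigl(\max(k_n^{-1/2},\sqrt{k_n/n})\bigr)=o_p(1)$. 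Together these give the explicit limit $(G,-\mu G)'$ with $G\sim\mathcal N\bigl(0,\,2\sigma^2/(\sigma^2h_1^2+\mu^2h_2^2)\bigr)$. This variance is finite because $\max(h_1,h_2)=1$ automatically.

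What your route adds over the bare citation is the mechanism. Under the normalisation of Theorem~\ref{cls}, the linear model's design matrix tends to the singular matrix $\begin{bmatrix}\mu^2&\mu\\ \mu&1\end{bmatrix}$. The affine $\Omega$ instead has determinant $\mathbb V[\Upsilon_\infty]=\mu\sigma^2/2>0$. The $h_2^2$ term comes from the first $O(k_n)$ observations. It is exactly the paper's ``first block is as informative as the whole sample'' phenomenon when $k_n\gg\sqrt n$.

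Three points in Step 2 need tightening.

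\textbf{Joint convergence of the numerator.} The two pieces are uncorrelated because of the martingale-difference structure: every $\mathbb E[\epsilon_tX_{0,s-1}\epsilon_s]$ vanishes. It is not because the deterministic piece lives on the first $O(k_n)$ observations; $\sum_tX_{0,t-1}\epsilon_t$ loads on those observations too. More importantly, marginal normality plus zero correlation does not by itself give normality of the sum. Instead, apply one martingale CLT to $\sum_t\bigl(X_{0,t-1}-\mu k_n(\alpha_n^{t-1}-a_n)\bigr)\epsilon_t$. Its conditional variance equals $\sigma^2$ times your Step~1 sum of squares, up to terms you already showed negligible, so Step~1 does double duty. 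The Lindeberg condition then reduces to $\max_tX_{0,t-1}^2/(nk_n)\overset{p}{\rightarrow}0$, which follows from uniform integrability of $X_{0,t}^2/k_n$ under finite variance.

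\textbf{Measurability.} $\tilde X_{t-1}=X_{t-1}-\bar X_-$ is not $\mathcal F_{t-1}$-measurable. Before invoking the martingale CLT, split off $\bar X_{0,-}\sum_t\epsilon_t=O_p(k_n)=o_p\bigl(\sqrt{k_n}\max(\sqrt n,k_n)\bigr)$.

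\textbf{Terminology.} ``Gaussian mixture'' is a misnomer here. The normalised conditional variance has a deterministic limit, so the numerator limit is simply Gaussian with variance $\sigma^2(\sigma^2h_1^2+\mu^2h_2^2)/2$.
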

%This result is striking for three reasons. First, eq. \eqref{assumption} is stronger than the classical assumption $n/k_n \rightarrow \infty$. For instance, the sequence $k_n=n^{\frac{1}{2}+\frac{1}{3}\cos(n)}$ satisfies $n/k_n \rightarrow \infty$, yet it does not satisfy \eqref{assumption}. In particular, for some sequences $(k_n)$, the limiting distribution of the OLS estimator may fail to exist. Second, even when the limit exists, it is degenerate, and is therefore of limited use for inference.
%and more importantly, the convergence rate of the OLS estimator of $\hat{\alpha}_n$ found in Theorem 5 is non-standard. % and is not compatible with the block local-to-unity interpretation. For instance, 
When $k_n=\sqrt{n}$, the proposition yields a rate $\sqrt{nk_n}$ for $\hat{\alpha}_n$, the same rate  that \citet{phillips2007limit} obtain for the model \emph{without} intercept. %as the OLS estimator of $\alpha_n$ in the model without intercept studied by \cite{phillips2007limit}. 
This stands in contrast to the local-to-unity AR(1) model, where introducing a nonzero intercept improves the convergence rate from $n$ \citep{phillips1987towards, yu2025inference} to ${n^{3/2}}$, see, e.g. \cite[p.~497]{hamilton1994} for the exact unit root case and \cite[Theorem 3]{liu2023limit} for the local-to-unity case. 
%To understand this "paradox", consider again the block interpretation. 
The block interpretation clarifies this apparent paradox. The OLS estimator computed from the first block of $\lfloor k_n \rfloor$ observations is a local-to-unity AR(1) estimator with intercept and is therefore $k_n^{3/2}$-consistent. When $k_n$ grows faster than $\sqrt{n}$, $k_n^{3/2}$ coincides with the full-sample rate $\sqrt{k_n}\max(\sqrt{n},k_n)$ of Proposition~7. That is, the first block alone is essentially as informative as the entire sample.
%In model \eqref{feimodel}, focusing on the first block of $\lfloor k_n \rfloor$ observations yields a local-to-unity model with intercept. Consequently, by \cite[p.~497]{hamilton1994}, the corresponding OLS estimator of $\alpha_n=1-\frac{1}{k_n}$ estimated on this subsample is $k_n^{3/2}$-consistent. If $k_n$ grows faster than $\sqrt{n}$, then the rate $k_n^{3/2}$ coincides with the rate $\sqrt{k_n}\max(\sqrt{n},k_n)$ given in Proposition 7 for the full sample OLS estimator. Thus, the estimator based on the first block has the same convergence speed as the estimator based on the entire sample. 
%same block local-to-unity interpretation as in section 3.2.2, then we would expect a convergence rate of $k_n^{3/2} \times \sqrt{n/k_n}= \sqrt{n}k_n$ for $\hat{\alpha}_n$ based on the full sample OLS. 
%How to interpret the fact that the subsequent blocks are less ``relevant"? This "paradox" can be explained by the fact that, in model \eqref{feimodel} under the mildly integrated Assumption 4, the rescaled process $X_t/k_n$, starting from 0, converges rapidly to $\mu$ (see section 4.2). This indicates that the behavior of the process in the first few blocks differs substantially from that in later blocks. 
Why are subsequent blocks effectively less informative? %Under Assumption~4, the rescaled process $X_t/k_n$ converges rapidly from $0$ to $\mu$ (see Proposition 4), so that the early blocks differ qualitatively from later ones in their behavior.
For the $i$th block, whose initial value is $X_{(i-1)k_n}$, we can rewrite the model as
$$
X_{t}-X_{(i-1) k_n}= \left(1-\frac{1}{k_n}\right)(X_{t-1}-X_{(i-1)k_n})+ \mu-\frac{X_{(i-1)k_n}}{k_n}+\epsilon_t, \qquad (i-1) k_n<t \leq i k_n. 
$$
Thus, the process $(X_{t}-X_{(i-1) k_n})$ is a local-to-unity AR(1) process with initial value 0 and a modified intercept $\mu_i=\mu-\frac{X_{(i-1)k_n}}{k_n}$. By Proposition~\ref{feilm1}, as the index of the block $i$ increases from $1$ to $n/k_n$, the modified intercept $\mu_i$ converges to 0. Consequently, for large $i$, the $i$th block behaves like a local-to-unity model without intercept, whose convergence rate is $k_n$ rather than $k_n^{3/2}$ \citep{hamilton1994}.  Later blocks therefore contribute disproportionately little to the full-sample estimator. %As a result, later blocks are effectively less informative for estimating $\alpha_n$, and contribute relatively little to the full-sample OLS estimator. %This explains why the overall convergence is not improved by including additional blocks.
 %is incompatible with the block local-to-unity interpretation, which predicts that 
%As a comparison, for the mildly stationary affine process, because the scaling limit process is asymptotically stationary, the different blocks have similar trajectories. 
By contrast, for the mildly stationary affine process, the scaling limit is asymptotically stationary, so successive blocks have qualitatively similar trajectories, and each contributes a comparable amount of information.

\paragraph{Comparison with mildly explosive AR(1) processes.}
%In particular,  \forall n$, with $\tau$ smaller than but close to 1 (hence is larger than $1/2$), then this alternative estimator based on fewer data converges at a higher rate than the OLS estimator based on all the data, since by Theorem 1, the latter converges at the rate $\sqrt{n k_n}=n^{\frac{1+\tau}{2}}< n^{\frac{3 \tau}{2}}$, since $\tau>\frac{1}{2}$. 
Part~$b)$ of Theorem~\ref{cls} shows that, in contrast to Theorem~\ref{thm1}, the asymptotic distribution of the OLS estimator depends on the sign of $\gamma$, %Under Assumptions~1-3, the limiting distribution depends on the behavior of the CIR process over the finite interval $(0,1)$, whereas in the mildly integrated setting it depends on its behavior over the expanding interval $(0,n/k_n)$, which diverges to $(0,\infty)$.
% CHECK BELOW? Inconsistency? 
%over $(0, \infty)$, whereas in Theorem \ref{thm1} it depends on its behavior over $(0,1)$. 
since the long-run behavior of the CIR process depends on this sign. The divergence of $\hat{\mu}_n$ for $\gamma=1$ can be read off the first-order condition $\hat{\mu}_n=\frac{1}{n} \sum_{t=1}^n (X_t- \hat{\alpha}_n X_{t-1})$. %Here, $\hat{\alpha}_n$ converges at rate $k_n e^{-\frac{n}{2k_n}}$, while $\sum_{t=1}^n X_{t-1}$ is of order $k^2_n e^{\frac{n}{k_n}}$. Consequently, the term $\frac{1}{n} \hat{\alpha}_n \sum_{t=1}^n   X_{t-1}$ diverges, 
Here, $\hat{\alpha}_n-\alpha_n$ is of order $\left(k_n\alpha_n^{n/2}\right)^{-1}$, while $\sum_{t=1}^n X_{t-1}$ is of order $k_n^2\alpha_n^n$. Consequently, the term $\frac{1}{n}(\hat{\alpha}_n-\alpha_n)\sum_{t=1}^n X_{t-1}$ is of order $\frac{k_n}{n}\alpha_n^{n/2}$, which in turn causes $\hat{\mu}_n$ to diverge. Finally, note that these convergence rates differ markedly from those obtained for the mildly explosive AR(1) model with intercept. %Indeed, \cite{fei2018limit} show that in their mildly stationary cases, $\alpha_n$ is $n\sqrt{k_n}$ consistent, whereas $\mu$ is $\sqrt{n}$ consistent; 
Indeed, \cite{fei2018limit} shows that, in the mildly explosive case, $\alpha_n$ is $\alpha_n^n k_n^{3/2}$-consistent. Its slightly faster convergence rate can be explained by the fact that, in the affine model, the conditional variance grows with the state and is therefore itself explosive. %Nevertheless, the rates we obtain are closely aligned with those reported in \cite{alaya2012parameter}, who study parameter estimation for the continuous-time CIR process. %Their approach, however, assumes that only one parameter is to be estimated at a time. 

\subsection{Estimation of the variance parameter}
In the mildly stationary case, we may assume without loss of generality that $\gamma=-1$. Moreover, $\mu$ can be consistently estimated by $\hat{\mu}_n$ since 
$$
\mu-\hat{\mu}_n=(\mu-\mu_n)+(\mu_n-\hat{\mu}_n),
$$
where the first term converges to zero by Assumption~\ref{Assbeta}, and the second converges to zero in distribution. By Slutsky's lemma, it follows that $\hat{\mu}_n \overset{p}{\rightarrow}\mu$. 

However, for inference purposes, the limiting distribution in Theorem~\ref{cls} also depends on the parameter $\sigma^2$.  %In both the local-to-unity and mildly integrated cases, the limiting diffusion process depends potentially on three parameters: $\gamma, \mu$ and $\sigma^2$. Let us now discuss, for , whether or not these parameters can be consistently estimated. 
Two cases arise. $(i)$ the parametric specification of the affine process pins down $\sigma^2$ a priori. This is the case for INARCH and NBAR models, for which $\sigma^2=1$ and $\sigma^2=2$, respectively.  
%\footnote{If one is ready to fix the value of $\mu$, then one can construct a confidence interval for $\hat{\alpha}_n$, in a similar way as the Dickey-Fuller test, with null hypothesis $H_0: \alpha_n=1$ against $H_1= \alpha_n<1$. }
%Since we can assume, without loss of generality, $\gamma=1$, and $\mu$ can be consistently estimated, all the parameters in the limiting distribution of Theorem 2 are estimable, and thus we can use Theorem 2 to construct a confidence interval for $\hat{\alpha_n}$, with a null hypothesis given by Assumption 3. This approach, however, comes with the downside that $\hat{\mu}$ converges only at the rate $\sqrt{n/k_n}$, which can be quite slow if $k_n$ is close to $n$. 
%However, since $\hat{\mu}$ converges only at the rate $\sqrt{n/k_n}$, 
$(ii)$ More generally, $\sigma^2$ is left unspecified — as in the ARG model — and must be estimated. Since the martingale difference sequence $W_t$ has conditional variance $\beta_n X_{t-1}+\delta_n$,
%$\beta_n X_{t-1}+\eta$ <- inconsistency? 
which is dominated by $\beta_n X_{t-1}$, a natural estimator of $\sigma^2=\lim_{n \to \infty} \beta_n $ is:
\begin{equation}
\label{sigmahat}
   \hat{\sigma}^2= \frac{\sum_{t=1}^n \widehat{W}_t^2}{ \sum_{t=1}^n X_{t-1}},
\end{equation}
where $\widehat{W}_t=X_t-\hat{\alpha}_n X_{t-1}-\hat{\mu}_n$ denotes the empirical residual. 

We then have the following result:
\begin{prop}
\label{convergencebeta}
As $n$ increases to infinity, the estimator $\hat{\sigma}^2$ converges in probability
to its true value $\sigma^2$. 
\end{prop}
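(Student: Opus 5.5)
We work in the mildly stationary regime ($\gamma=-1$) of Theorem~\ref{cls}~a). The plan is to split the residual sum into the true innovations plus estimation error, and to handle the two pieces separately.

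Write $W_t:=X_t-\alpha_nX_{t-1}-\mu_n$. This is a martingale difference sequence with $\mathbb{E}[W_t^2\mid X_{t-1}]=\beta_nX_{t-1}+\delta_n$. Then
\[
\widehat W_t=W_t-(\hat\alpha_n-\alpha_n)X_{t-1}-(\hat\mu_n-\mu_n),
\]
so that
\[
\sum_{t=1}^n\widehat W_t^2=\sum_{t=1}^n W_t^2-\begin{bmatrix}\sum X_{t-1}W_t\\ \sum W_t\end{bmatrix}'\begin{bmatrix}\sum X_{t-1}^2&\sum X_{t-1}\\ \sum X_{t-1}& n\end{bmatrix}^{-1}\begin{bmatrix}\sum X_{t-1}W_t\\ \sum W_t\end{bmatrix}.
\]
This is the usual OLS identity: residual sum of squares equals innovation sum of squares minus the projection term. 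We divide numerator and denominator of $\hat\sigma^2$ by $nk_n$ and show three things.

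\textbf{Step 1: the denominator.} We show $\frac{1}{nk_n}\sum_{t=1}^n X_{t-1}\overset{p}{\to}\mathbb{E}[\Upsilon_\infty]=\mu$. This law of large numbers is already an ingredient of the proof of Theorem~\ref{cls}~a), where the normalized matrix converges to $\Omega$. It follows from Proposition~\ref{scalinglimit} together with the ergodicity of the CIR limit (Proposition~\ref{stationaritycir}), using the uniform moment bounds coming from Assumption~\ref{AssAffineMomentBis}.

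\textbf{Step 2: the projection term is negligible.} Using the rates in Theorem~\ref{cls}~a), the projection term equals a quadratic form in $\bigl(\sqrt{nk_n}(\hat\alpha_n-\alpha_n),\sqrt{n/k_n}(\hat\mu_n-\mu_n)\bigr)$ with weight matrix $\frac{1}{nk_n}$ times the scaled regressor matrix. That matrix is scaled by $\mathrm{diag}(nk_n^2,n)$ and converges to $\Omega$. Hence the projection term is $O_p(1)$, and after division by $nk_n$ it is $o_p(1)$.

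\textbf{Step 3: the main obstacle.} We must show $\frac{1}{nk_n}\sum_{t=1}^n W_t^2\overset{p}{\to}\sigma^2\mu$. Decompose
\[
\sum W_t^2=\sum(\beta_nX_{t-1}+\delta_n)+\sum\bigl(W_t^2-\mathbb{E}[W_t^2\mid X_{t-1}]\bigr).
\]
The first sum, divided by $nk_n$, tends to $\sigma^2\mu$ by Step~1, since $n\delta_n/(nk_n)\to0$.

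The second sum is a martingale $M_n$ whose increments lack second moments in general, because we only have $p>3$ moments on the building blocks. We therefore use a truncation or von Bahr--Esseen bound with exponent $r=\min(p/2,2)\in(1,2]$:
\[
\mathbb{E}|M_n|^r\le C\sum_t\mathbb{E}|W_t|^{2r}.
\]
By the affine (infinitely divisible/branching) structure, $\mathbb{E}\bigl[|W_t|^{2r}\mid X_{t-1}\bigr]\le C\bigl(X_{t-1}^{r}+X_{t-1}+1\bigr)$. This is a Rosenthal-type bound for a sum of $X_{t-1}$ i.i.d. centered pieces with $p$ moments, the same one used for Theorem~\ref{cls}. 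We also have $\sup_t\mathbb{E}[X_t^r]=O(k_n^r)$, from the stationary moment bounds for the mildly stationary chain. Together these give
\[
\mathbb{E}|M_n|^r=O(nk_n^r),
\]
so
\[
\frac{M_n}{nk_n}=O_p\bigl(n^{1/r-1}\bigr)=o_p(1).
\]
The delicate point is establishing the conditional $2r$-moment bound uniformly in $n$ from Assumption~\ref{AssAffineMomentBis}. If $p\ge4$, one can simply take $r=2$ and compute variances directly.

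Combining Steps~1--3 with Slutsky's lemma gives $\hat\sigma^2\overset{p}{\to}\sigma^2\mu/\mu=\sigma^2$.
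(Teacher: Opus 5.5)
Your proposal is correct and follows essentially the paper's route: you split $\sum_t\widehat W_t^2$ into $\sum_t W_t^2$ plus estimation-error terms, use Lemma~\ref{lem:ms-lln} for the denominator, and use a von Bahr--Esseen martingale LLN for $\sum_t W_t^2$, as the paper does in its bootstrap proof; the only difference is that you use the exact OLS identity $\sum_t\widehat W_t^2=\sum_t W_t^2-d_n'F_n^{-1}d_n$ instead of the paper's expansion with a Cauchy--Schwarz bound on the cross term. One small bookkeeping slip in Step~2: with $D_n=\mathrm{diag}(\sqrt{nk_n},\sqrt{n/k_n})$ one has $D_n^{-1}F_nD_n^{-1}=k_n\tilde F_n$, where $\tilde F_n\to\Omega$ as in the proof of Theorem~\ref{cls}, so the projection term is $O_p(k_n)$ rather than $O_p(1)$; this is still $o_p(nk_n)$, so the conclusion is unaffected.
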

Note that, by the same argument, this estimator is also consistent in the local-to-unity case. The convergence rate of the parameter $\sigma^2$ is more delicate and would require control of conditional moments of order higher than two. We leave it to future research. 

Also, Proposition 8 does not provide the convergence rate of $\hat{\sigma}^2$. Finding such a rate would require additional assumptions, in particular the convergence rate of $\beta_n \to \sigma^{2}$, as well as possibly higher-order conditional moment conditions. Since the rate of $\hat{\sigma}^2$ is not necessary for the asymptotic distribution of $(\widehat{\alpha}_n,\widehat{\mu}_n)$, such an analysis is omitted.

\subsection{Alternative scaling without $k_n$ in the mildly stationary case}
In practice, the functional form of $k_n$ is rarely known. However,  $\alpha_n=1-\frac{1}{k_n}$ gives $k_n=\frac{1}{1-\alpha_n}$. Can we replace $\alpha_n$ by its consistent estimate $\hat{\alpha}_n$ to obtain $\hat{k}_n=\frac{1}{1-\hat{\alpha}_n}$, and substitute it into Theorem~\ref{cls} to get a large sample theory that no longer depends on the unknown $k_n$? The following theorem confirms this intuition. 
\begin{theo}
\label{plugin}
Under Assumptions~\ref{ass1}, \ref{Assbeta}, \ref{assumptionmildly} with $\gamma<0$ and Assumption~\ref{AssAffineMomentBis}, as $n$ increases to infinity, 
        \begin{equation}
    \label{uniform2}
   \begin{bmatrix}
\sqrt{\frac{n}{1- \hat{\alpha}_n}}(\hat{\alpha}_{n}-\alpha _{n}) \\
\sqrt{n (1-\hat{\alpha}_n)}(\hat{\mu}_n-\mu_n)
\end{bmatrix}
\overset{w}{%
\longrightarrow } \mathcal{N}(0, \,  \Omega^{-1} \Sigma \Omega^{-1}).
    \end{equation}
    \end{theo}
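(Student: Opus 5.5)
The plan is to reduce Theorem~\ref{plugin} to Theorem~\ref{cls}$\,a)$ by Slutsky's lemma. The key is to show that the data-driven scale $\hat k_n := (1-\hat\alpha_n)^{-1}$ is ratio-consistent for $k_n$, that is, $\hat k_n/k_n \overset{p}{\to} 1$. Once this holds, we can rewrite
\[
\sqrt{\frac{n}{1-\hat\alpha_n}}(\hat\alpha_n-\alpha_n)=\left(\frac{\hat k_n}{k_n}\right)^{1/2}\sqrt{nk_n}(\hat\alpha_n-\alpha_n),\qquad
\sqrt{n(1-\hat\alpha_n)}(\hat\mu_n-\mu_n)=\left(\frac{k_n}{\hat k_n}\right)^{1/2}\sqrt{n/k_n}(\hat\mu_n-\mu_n).
\]
The vector of random factors $\big((\hat k_n/k_n)^{1/2},(k_n/\hat k_n)^{1/2}\big)$ then converges in probability to $(1,1)$. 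Theorem~\ref{cls}$\,a)$ gives joint weak convergence of the unnormalised vector to $\mathcal N(0,\Omega^{-1}\Sigma\Omega^{-1})$. Multiplying componentwise by a diagonal matrix tending to the identity in probability, Slutsky's lemma (equivalently the continuous mapping theorem applied to the pair) yields \eqref{uniform2} with the same covariance.

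For the ratio consistency, use the normalisation $\gamma=-1$, so $1-\alpha_n = 1/k_n$. Then
\[
\frac{\hat k_n}{k_n}=\frac{1-\alpha_n}{1-\hat\alpha_n}=\frac{1}{1-k_n(\hat\alpha_n-\alpha_n)}.
\]
By Theorem~\ref{cls}$\,a)$, $\hat\alpha_n-\alpha_n=O_p\big((nk_n)^{-1/2}\big)$. Hence
\[
k_n(\hat\alpha_n-\alpha_n)=O_p\big(\sqrt{k_n/n}\big)=o_p(1),
\]
using condition \eqref{conditiononkn}, $k_n/n\to0$. By the continuous mapping theorem at the point $0$, $\hat k_n/k_n\overset{p}{\to}1$. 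In particular, $1-\hat\alpha_n>0$ with probability tending to one, so the square roots in \eqref{uniform2} are well defined on events of probability approaching one. On the complementary event we may assign any value without affecting weak limits.

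The only point needing care, and the expected main obstacle, is this rate comparison. The estimation error of $\hat\alpha_n$ must be negligible relative to the distance $1-\alpha_n=1/k_n$ of $\alpha_n$ from unity. This is exactly where the mildly integrated assumption is used, since the ratio $\sqrt{nk_n}\cdot k_n^{-1}=\sqrt{n/k_n}\to\infty$. In the local-to-unity case the analogous ratio would be $O(1)$, because $n(\hat\alpha_n-\alpha_n)$ has a nondegenerate limit by Theorem~\ref{thm1}, and the plug-in would fail. A secondary check is that if $\alpha_n$ is written as $1+\gamma/k_n$ with a general $\gamma<0$, the plug-in scale $1/(1-\hat\alpha_n)$ estimates $k_n/|\gamma|$. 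It therefore automatically implements the normalisation of the Proposition preceding Proposition~\ref{scalinglimit}, so no knowledge of $k_n$ or $\gamma$ is needed. Consistent estimation of $\mu$ and $\sigma^2$ (Proposition~\ref{convergencebeta}) can then be plugged into $\Omega^{-1}\Sigma\Omega^{-1}$ for feasible inference, though this is not needed for the statement itself.
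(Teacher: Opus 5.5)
Your proposal is correct and follows essentially the same route as the paper. Both arguments use $\hat\alpha_n-\alpha_n=O_p((nk_n)^{-1/2})$ and $k_n/n\to0$ to show that the plug-in normalizations $n/(1-\hat\alpha_n)$ and $n(1-\hat\alpha_n)$ are asymptotically equivalent to $nk_n$ and $n/k_n$, and then substitute into Theorem~\ref{cls}$\,a)$. Your version is somewhat cleaner: you read the rate directly off Theorem~\ref{cls}$\,a)$ instead of invoking the technique of \cite{giraitis2006uniform}, and you make explicit three steps the paper leaves implicit, namely the ratio consistency $\hat k_n/k_n\overset{p}{\to}1$, the positivity of $1-\hat\alpha_n$ with probability tending to one, and the final Slutsky step.
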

    \begin{proof}
        See Appendix \ref{proofplugin}.
    \end{proof}

%Substituting into shows that $\sqrt{n/(1-\alpha_n)}(\hat\alpha_n - \alpha_n)$ and $\sqrt{n(1-\alpha_n)}(\hat\mu_n - \mu_n)$ converge jointly to $\mathcal{N}(0, \Omega^{-1}\Sigma\Omega^{-1})$. 
    
    This result shows that, under the mildly stationary regime ($\gamma <0$), we can obtain an asymptotic joint distribution for the true parameters $(\alpha_n, \mu_n)$, \textit{without specifying the functional form of $k_n$}. The normalizing factors in \eqref{uniform2} are observable, and the limiting covariance matrix on the right side depends only on $\mu$ and $\sigma^2$, which is consistently estimated by $\hat{\mu}_n$. %Indeed, aside from $\alpha_n, \mu_n$, all terms on the left-hand side of \eqref{uniform2} are observable, while the limiting covariance matrix on the right-hand side depends only on $\mu$, which can be consistently estimated by $\hat{\mu}_n$. 
    This avoids the nuisance parameter problem in $\gamma$ that hampers inference in the local-to-unity framework of Section~\ref{sec:3localtounity}.
   % This is a significant advantage compared to the local-to-unity framework of Section 3, which suffers from the nuisance parameter problem due to $\gamma$ under Assumption 3. 

   Note that Theorem~\ref{plugin} does not extend to the local-to-unity case. Indeed, this theorem is based on the fact that $\hat{k}_n/k_n$ converges to 1 as $n$ increases to infinity. This convergence holds in the mildly stationary case, but breaks down in the local-to-unity case, since in this case, $n(1-\alpha_n)=\gamma$ is constant by Assumption~\ref{assumptionnearlyunstable}, while $n(1-\hat{\alpha}_n)$ has a nondegenerate random limit by Theorem 3. Furthermore, even if $\hat{k}_n$ were consistent, Theorem~\ref{thm1} shows that the limiting distribution is non-Gaussian, so the normal approximation in Theorem~\ref{plugin} would still fail.

%and we state the result in the following corollary:
 
%\begin{equation}
%n^{1/2}k_{n}^{-1/2}(\hat{\mu}-\mu)\overset{w}{%\rightarrow } %2\mu U_2-(2\gamma-1)U_1
%(1+2\mu) U_2-2 \gamma U_1
%\label{limitmuhat}
%\end{equation}
%\end{corollary}

%In \cite{fei2018limit}, they consider the model $$X_{t}=\mu+\alpha_n X_{t-1}+u_t$$ where $u_t$ is i.i.d. with zero mean. They show that $\alpha_n^{-n} k_n^{-2}\sum y_{t-1}$ and $\alpha_n^{-2n}(\alpha_n^2-1) k_n^{-2}\sum y^2_{t-1}$ both converge to constants, and $\alpha_n^{-n} k_n^{-3/2}\sum y_{t-1}u_t$ converges to a normal distribution. In our case, we have $X_t=\alpha_n X_{t-1}+\mu+W_t$, where $(W_t)$ is a martingale difference equation, but is not i.i.d. Moreover, it features heteroscedasticity, and is not stationary. 

%Let us first show that:
%\begin{itemize}
%\item $k_n^{-2} \alpha_n^{-n} X_{t}$ converges to a non degenerate distribution
%\item $k_n^{-2} \alpha_n^{-n} X_{t}$
%\item $k_n^{-2} \alpha_n^{-n} X_{t}$
%\end{itemize}
%We follow the same proof as Theorem 2.6 of \cite{fei2018limit}. 
%\subsection{Comparison with linear AR(1) with intercept}

\begin{remark}[Toward uniform feasible inference for $\alpha_n$]
\label{rem:uniform-alpha}
It is also possible to write a studentized version of Theorem 5. 

More precisely, we define $Z_{t-1}=(X_{t-1},1)^\top$, and
\[
\hat M_n=\frac1n\sum_{t=1}^n Z_{t-1}Z_{t-1}^\top,\qquad
\hat S_n=\frac1n\sum_{t=1}^n \hat V_t\,Z_{t-1}Z_{t-1}^\top,
\]
where $\hat V_t=\hat W_t^2$, with $\hat W_t$ the OLS residual. A feasible standard error
for $\hat\alpha_n$ is then
\[
\widehat{\mathrm{Var}}(\hat\alpha_n)=\Big[\tfrac1n\,\hat M_n^{-1}\hat S_n\hat M_n^{-1}\Big]_{11},
\]
and we have:
$(\hat\alpha_n-\alpha_n)/\widehat{\mathrm{Var}}(\hat\alpha_n)^{1/2}\Rightarrow
\mathcal N(0,1)$. 

An advantage of such a result would be that in the fixed stationary regime $\alpha_n=\alpha<1$, one expects the studentized term $(\hat\alpha_n-\alpha_n)/\widehat{\mathrm{Var}}(\hat\alpha_n)^{1/2}$ to still have a standard normal limit, hence one might get a uniform inferential result in both the fixed stationary and the mildly stationary regimes, in a similar spirit to the uniform inference developed by \citet{giraitis2006uniform} for linear AR(1) models without intercept. We do not pursue the formal derivation of this result, however, since we are not interested in the fixed stationary case. Note also that the local-to-unity regime $n(1-\alpha_n)\to \gamma <\infty$ is excluded, since there the rescaled design
converges to the random functional $\int_0^1\Upsilon_s^2\,ds$ and
$\hat\alpha_n$ has the non-Gaussian, nuisance-dependent limit of
Theorem~3.
\end{remark}

\subsection{Bootstrap for mildly stationary processes} \label{sec:bootstrap}
%In section 3 we have derived the large sample properties of the mildly integrated process. 
 
For INAR(1) processes, the literature \citep{chen2024unified, peng2024note} has also adopted the random-weighting bootstrap of \cite{zhu2016bootstrapping}. Although the plug-in method proposed in Theorem~\ref{plugin} already provides a simple tool for inference, we show below that the same bootstrap method is also valid in our framework and can serve as an alternative to the plug-in approach. More precisely, we consider the following
%In small samples, the limiting distributions derived in the previous section may not provide sufficiently accurate approximations. We therefore consider the random weighting bootstrap of \cite{zhu2016bootstrapping}, which has also been applied to mildly stationary INAR(1) models by , leading to the following
bootstrapped OLS estimators:
\begin{equation}
\label{cls_bootstrap}
(\hat{\alpha}^b_n, \hat{\mu}_n^b)^{\prime}:=\arg\min_{\alpha, \mu} \sum_{t=1}^n \delta^b_t(X_t-\alpha X_{t-1}-\mu)^2 = 
\left(
\begin{bmatrix}
\sum_{t=1}^n \delta^b_t X_{t-1}^2 & \sum_{t=1}^n  \delta^b_t X_{t-1} \\
\sum_{t=1}^n \delta^b_t X_{t-1} & \sum_{t=1}^n \delta_t^b 
\end{bmatrix}\right)^{-1} 
\begin{bmatrix}
\sum_{t=1}^n  \delta_t^b X_{t-1} X_t \\
\sum_{t=1}^n \delta^b_t X_t
\end{bmatrix}.
\end{equation}
where $b=1,\dots,B$, indexes bootstrap replications and $(\delta^b_t)$, $b, t$ varying, is an i.i.d. sequence of positive random variables, independent of the process $(X_t)$, with unit mean and unit variance. We also assume that this sequence is not too heavy-tailed:
$$\mathbb{E}|\delta_t^b-1|^{2+\eta}<\infty$$ for some $\eta$ that is positive but close to zero. 

The following theorem establishes the validity of this bootstrap in the mildly stationary case. 
\begin{theo}
\label{bootstrap}
 Under Assumptions~\ref{ass1}, \ref{Assbeta}, \ref{assumptionmildly} and \ref{AssAffineMomentBis} with $\gamma=-1$, conditionally on the data and in probability, 
 \[
\begin{bmatrix}
\sqrt{nk_n}(\hat{\alpha}^b_n-\hat{\alpha}_n)\\
\sqrt{n/k_n}(\hat{\mu}^b_n-\hat{\mu}_n)
\end{bmatrix}
\overset{w}{\longrightarrow}
\mathcal N(0,\Omega^{-1}\Sigma\Omega^{-1}),
\]
where $\Omega$ and $\Sigma$ are the matrices in Theorem~\ref{cls}.
\end{theo}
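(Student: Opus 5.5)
We would start from the explicit form of the weighted estimator in \eqref{cls_bootstrap}. Writing $W_t=X_t-\alpha_nX_{t-1}-\mu_n$ and $\hat W_t=X_t-\hat\alpha_nX_{t-1}-\hat\mu_n$, and using that the weighted normal equations are solved exactly, we get
\[
\begin{bmatrix}\hat\alpha^b_n-\hat\alpha_n\\ \hat\mu^b_n-\hat\mu_n\end{bmatrix}
=\Big(\sum_{t=1}^n\delta^b_tZ_{t-1}Z_{t-1}^\top\Big)^{-1}\sum_{t=1}^n\delta^b_tZ_{t-1}\hat W_t
=\Big(\sum_{t=1}^n\delta^b_tZ_{t-1}Z_{t-1}^\top\Big)^{-1}\sum_{t=1}^n(\delta^b_t-1)Z_{t-1}\hat W_t,
\]
where $Z_{t-1}=(X_{t-1},1)^\top$. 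The second equality holds because the unweighted OLS normal equations give $\sum_t Z_{t-1}\hat W_t=0$. Set $D_n=\mathrm{diag}(\sqrt{nk_n},\sqrt{n/k_n})$ and $G_n=\mathrm{diag}(k_n,1)$. The scaled bootstrap error is then
\[
\Big(\tfrac1n G_n^{-1}\textstyle\sum_t\delta^b_tZ_{t-1}Z_{t-1}^\top G_n^{-1}\Big)^{-1}\cdot\dfrac{1}{\sqrt{nk_n}}\textstyle\sum_t(\delta^b_t-1)G_n^{-1}Z_{t-1}\hat W_t .
\]
This has the same structure as the proof of Theorem~\ref{cls}(a), with $W_t$ replaced by $(\delta^b_t-1)\hat W_t$. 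The proof splits into a denominator step and a numerator step.

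\textbf{Denominator.} From the proof of Theorem~\ref{cls}(a) (ergodic-type averages of $Y_s=X_{\lfloor k_ns\rfloor}/k_n$ over $[0,n/k_n]$, using Propositions~\ref{scalinglimit} and \ref{stationaritycir}), we take $\frac1nG_n^{-1}\sum_tZ_{t-1}Z_{t-1}^\top G_n^{-1}\to\Omega$ in probability. It remains to show that the weighted version has the same limit. Since the $\delta^b_t-1$ are i.i.d., centered and independent of the data, the conditional variance of $\frac1n\sum_t(\delta^b_t-1)X_{t-1}^2/k_n^2$ equals $n^{-2}\sum_tX_{t-1}^4/k_n^4$. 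This is $O_p(1/n)$ provided $\frac1n\sum_t(X_{t-1}/k_n)^4=O_p(1)$. We would get that bound from a uniform-in-time fourth moment bound on $X_t/k_n$, which follows from the affine structure by iterating conditional moments, as in the paper's proof of Theorem~\ref{cls}. A conditional Chebyshev argument then gives the limit $\Omega$ in probability.

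\textbf{Numerator.} First replace $\hat W_t$ by $W_t$. The difference is $\hat W_t-W_t=-(\hat\alpha_n-\alpha_n)X_{t-1}-(\hat\mu_n-\mu_n)$. After scaling, its contribution is a sum of the same form as the denominator term multiplied by $D_n(\hat\theta_n-\theta_n)\cdot\frac{1}{\sqrt n}$-order factors. Conditionally it has mean zero and variance of order $\|D_n(\hat\theta_n-\theta_n)\|^2\cdot O_p(1/n)\to0$ by Theorem~\ref{cls}(a). Conditionally on the data, the leading term $S_n^b=\frac{1}{\sqrt n}\sum_t(\delta^b_t-1)\xi_{n,t}$, with $\xi_{n,t}=(X_{t-1}W_t/k_n^{3/2},\,W_t/k_n^{1/2})^\top$, is a sum of independent, mean-zero vectors with conditional covariance $\frac1n\sum_t\xi_{n,t}\xi_{n,t}^\top$. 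We then need two facts.
\begin{enumerate}
\item[(i)] $\frac1n\sum_t\xi_{n,t}\xi_{n,t}^\top\to\Sigma$ in probability. Write $W_t^2=(\beta_nX_{t-1}+\delta_n)+(W_t^2-\mathbb E[W_t^2\mid X_{t-1}])$. The first part gives $\sigma^2\cdot\frac1n\sum(X_{t-1}/k_n)^{j}$ with $j=3,2,1$, which converges to $\Sigma$ by the same ergodic averages. The martingale remainder has vanishing second moment. This needs $\mathbb E|W_t|^{p}$ with $p>3$, i.e.\ Assumption~\ref{AssAffineMomentBis}, possibly via a truncation or a von Bahr--Esseen inequality with exponent $p/2>1$ combined with conditional $p$-th moments growing like $X_{t-1}^{p/2}$.
\item[(ii)] A conditional Lyapunov condition: $n^{-1-\eta/2}\sum_t\|\xi_{n,t}\|^{2+\eta}\,\mathbb E|\delta-1|^{2+\eta}\to0$ in probability, for $\eta$ small so that $2+\eta\le p$. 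Its expectation is $O(n^{-\eta/2})$ under the uniform moment bounds.
\end{enumerate}
The Lindeberg--Feller CLT then gives $S_n^b\Rightarrow\mathcal N(0,\Sigma)$ conditionally, in probability.

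Combining the two steps by Slutsky's lemma conditionally on the data yields the limit $\mathcal N(0,\Omega^{-1}\Sigma\Omega^{-1})$. We expect the main obstacle to be step (i) and the uniform moment bounds behind it. The conditional variance of $W_t^2$ is of order $X_{t-1}^2$, which is unbounded. Controlling $\frac1n\sum_t X_{t-1}^2(W_t^2-\mathbb E[W_t^2\mid X_{t-1}])/k_n^3$ therefore requires $\sup_t\mathbb E[(X_t/k_n)^{q}]<\infty$ together with conditional moments of $W_t$ beyond order $2$. If $p\in(3,4)$ only, we would first truncate $W_t$ at level $k_n^{1/2}n^{\epsilon}$ and treat the truncated and tail parts separately.
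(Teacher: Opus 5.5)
Your route is the paper's. You use $\sum_t Z_{t-1}\hat W_t=0$ to rewrite the bootstrap score with weights $\delta^b_t-1$, prove a conditional LLN for the weighted design, and apply a conditional CLT whose covariance tends to $\Sigma$ by splitting $W_t^2$ into $\beta_nX_{t-1}+\delta_n$ plus a martingale remainder. Your one deviation is to swap $\hat W_t$ for $W_t$ inside the bootstrap score rather than inside the conditional covariance, and it is convenient. The swap error is exactly $-\frac1n\sum_t(\delta^b_t-1)\tilde Z_{t-1}\tilde Z_{t-1}^\top\,D_n(\hat\theta_n-\theta_n)$ with $\tilde Z_{t-1}=(Y_{n,t-1},1)^\top$, so the denominator step disposes of it. Lyapunov can then be checked on the true $W_t$, whereas the paper checks Lindeberg directly on $\hat W_t$.

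The gap is in the moment bookkeeping, which is exactly where the threshold $p>3$ of Assumption~\ref{AssAffineMomentBis} matters. Three steps fail as written.

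(1) You invoke a uniform fourth-moment bound on $X_t/k_n$ for the denominator, and again, through $\|D_n(\hat\theta_n-\theta_n)\|^2 O_p(1/n)$, for the swap. This bound does not follow from the assumptions when $3<p<4$. If the law with Laplace transform $e^{-a_n}$ has an infinite fourth moment, so does $X_t$, and $\frac1n\sum_tY_{n,t-1}^4$ need not be $O_p(1)$. Both steps only need $\frac{1}{n^2}\sum_tY_{n,t-1}^4=o_p(1)$, which follows from $\frac{1}{n^2}\sum_tY_{n,t-1}^4\le\big(\max_{t\le n}Y_{n,t-1}/n\big)\cdot\frac1n\sum_tY_{n,t-1}^3$. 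Here $\max_{t\le n}Y_{n,t-1}/n=o_p(1)$ by a union bound with $p$-th moments, and $\frac1n\sum_tY_{n,t-1}^3=O_p(1)$ by Lemma~\ref{lem:ms-lln}. This is how the paper handles it; for the design it alternatively truncates at level $R$.

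(2) Consider the martingale remainder in the $(1,1)$ entry of the covariance, with summands $k_n^{-3}X_{t-1}^2(W_t^2-\mathbb{E}[W_t^2\mid\mathcal F_{t-1}])$. Using $\mathbb{E}[|W_t|^{2q}\mid\mathcal F_{t-1}]\le C(1+X_{t-1}^{q})$ for $2q\le p$, their $q$-th moments are bounded by a multiple of $\mathbb{E}[(1+Y_{n,t-1})^{3q}]$. The von Bahr--Esseen exponent must therefore satisfy $1<q\le\min(2,p/3)$. Your exponent $p/2$ would require moments of order $3p/2>p$. This is one of the places where $p>3$ is genuinely used.

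(3) Similarly, $\mathbb{E}\|\xi_{n,t}\|^{2+\eta}$ is bounded by a multiple of $\mathbb{E}[(1+Y_{n,t-1})^{3+3\eta/2}]$. Your Lyapunov step therefore needs $3+3\eta/2\le p$, not merely $2+\eta\le p$. You can always arrange this by shrinking $\eta$ via Jensen, as the paper does.

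With these three corrections your argument proves the theorem under $p>3$.
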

\begin{proof}
    See Appendix~\ref{proofbootstrap}.
\end{proof}
As a comparison, in the local-to-unity case an analogous conditional CLT shows that, conditionally on the data, the rescaled and centered bootstrap estimator $
\begin{bmatrix}
n(\hat{\alpha}^b_n-\hat{\alpha}_n)\\
\hat{\mu}^b_n-\hat{\mu}_n
\end{bmatrix}$
converges weakly, in probability, to the conditional law of:
\begin{equation}
\label{cmt_bs}
 \begin{bmatrix}
         \int_0^1 \Upsilon^2_s \mathrm{d}s &     \int_0^1 \Upsilon_s \mathrm{d}s\\
         \int_0^1 \Upsilon_s \mathrm{d}s & 1
 \end{bmatrix}^{-1} \begin{bmatrix}
   \sigma  \int_0^1 \Upsilon_s^{3/2} \mathrm{d}\tilde{B}_s\\
   \sigma \int_0^1 \Upsilon_s^{1/2} \mathrm{d}\tilde{B}_s
 \end{bmatrix},
\end{equation}
where $(\tilde{B}_s)$ is a standard Brownian motion independent of $(B_s)$. Conditional on the trajectory of $(\Upsilon_s)_{s \in [0,1]}$, the distribution in \eqref{cmt_bs} is Gaussian, whereas the limit in Theorem~\ref{thm1} becomes deterministic. The reason is that in $\int_0^1 \Upsilon_s^{3/2} \mathrm{d}B_s$, the integrator $(B_s)$ and the integrand $\Upsilon_s^{3/2}$ are correlated, whereas in \eqref{cmt_bs} the bootstrap noise $(\tilde{B}_s)$ is independent of $\Upsilon_s^{3/2}$. The two limits therefore differ, so the random-weighting bootstrap is \emph{invalid} in the local-to-unity case. 
%Indeed, in the stochastic integral $\int_0^1 \Upsilon_s^{3/2} \mathrm{d}B_s$ appearing in Theorem~3, the Brownian motion $(B_s)$ and the process $\Upsilon_s^{3/2}$ are correlated, whereas in \eqref{cmt_bs}, $(\tilde{B}_s)$ is independent of the process $\Upsilon_s^{3/2}$. This discrepancy leads to a different limiting distribution, implying that the bootstrap is invalid in the local-to-unity case.
 By contrast, Theorem~\ref{bootstrap} shows that the random-weighting bootstrap is valid in the mildly stationary case. Consequently, bootstrap methods can be used to construct confidence intervals, even when the limiting distribution depends on additional parameters such as $\sigma$. 

Taken together, the results of Sections~\ref{sec:3localtounity}–\ref{sec:5inferencemild} show that the mildly stationary framework offers substantial advantages over the local-to-unity framework: consistent estimation of both parameters, joint asymptotic normality, feasible inference without knowledge of $k_n$, as well as bootstrap validity.
 %Overall, the results of Sections~3-5 indicate that the mildly integrated framework offers substantial advantages over the local-to-unity framework considered in Section 3. 

 \subsection{Alternative estimators for mildly stationary processes}
For many stationary time series models, maximum likelihood (ML) and weighted least squares (WLS) estimators are more efficient than OLS, but their properties are much less well understood in nonstationary settings. Let us now briefly discuss two alternative estimators. %the regression model we work on features conditional heteroscedasticity, and 
Because $\gamma$ remains a nuisance parameter in the local-to-unity framework, we restrict our attention to the mildly stationary model.

%It will be shown in this section that for local-to-unity or mildly integrated affine processes, the properties of these estimators are rather complicated, making them not as competitive as in the stationary case. 
\subsubsection{Weighted Least Squares (WLS)}
This estimator, first introduced by \cite{wei1990estimation} for exactly integrated affine count processes, solves the following:
\begin{equation}
\label{wls1}
(\hat{\hat{\alpha}}_n,\hat{\hat{\mu}}_n):=\arg \min_{(\alpha, \mu)} \sum_{t=1}^n \frac{(X_t-\alpha X_{t-1}-\mu)^2}{1+X_{t-1}}.
\end{equation}
It follows that
\begin{align*}
\begin{bmatrix}
    \hat{\hat{\alpha}}_n-\alpha_n\\
    \hat{\hat{\mu}}_n-\mu_n
\end{bmatrix}=
    \begin{bmatrix}
\sum_{t=1}^n  \frac{ X^2_{t-1}}{ X_{t-1}+1 } & \sum_{t=1}^n  \frac{ X_{t-1}}{ X_{t-1}+1}  \\
\sum_{t=1}^n  \frac{ X_{t-1}}{ X_{t-1}+1}   & \sum_{t=1}^n  \frac{1}{ X_{t-1}+1} 
\end{bmatrix}^{-1}
\begin{bmatrix}
    \sum_{t=1}^n  \frac{W_t X_{t-1}}{ X_{t-1}+1}\\
    \sum_{t=1}^n   \frac{W_t }{ X_{t-1}+1}
\end{bmatrix}:=H_n^{-1}v_n.
\end{align*}
 By analyzing the asymptotic behavior of matrix $H_n$ and vector $v_n$, we obtain the following result.
\begin{prop}
\label{transient}
In the mildly stationary model ($\gamma=-1$), if $\frac{2\mu}{\sigma^2}>1$, then
\begin{align*}
    \begin{bmatrix}
 \sqrt{nk_n}  ( \hat{\hat{\alpha}}_n-\alpha_n)\\
 \sqrt{n/k_n}(   \hat{\hat{\mu}}_n-\mu_n)
\end{bmatrix}
\overset{w}{\longrightarrow}\mathcal{N}\left(0, \sigma^2 \begin{bmatrix}
    \mathbb{E}[\Upsilon_\infty] & 1 \\
    1 &  \mathbb{E}[1/\Upsilon_\infty]
\end{bmatrix}^{-1}\right),
\end{align*}
where $\mathbb{E}[1/\Upsilon_\infty]=\left(\mu-\frac{\sigma^2}{2}\right)^{-1}$ is finite. In particular, $\sqrt{nk_n}  ( \hat{\hat{\alpha}}_n-\alpha_n) \overset{w}{\rightarrow} \mathcal{N}(0,2). $
\end{prop}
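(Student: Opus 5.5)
Write the WLS error as $H_n^{-1}v_n$ and rescale with $D_n=\mathrm{diag}(\sqrt{nk_n},\sqrt{n/k_n})$. Then
$$D_n\begin{bmatrix}\hat{\hat{\alpha}}_n-\alpha_n\\ \hat{\hat{\mu}}_n-\mu_n\end{bmatrix}=(D_n^{-1}H_nD_n^{-1})^{-1}D_n^{-1}v_n .$$
The argument has three steps. First, identify the probability limit of the rescaled design $D_n^{-1}H_nD_n^{-1}$. Second, prove a martingale CLT for $D_n^{-1}v_n$. Third, combine them with Slutsky's lemma; the claimed covariance will drop out as in a GLS sandwich.

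\textbf{Design matrix.} Set $Y_t=X_t/k_n$, which is well approximated by the stationary CIR limit of Proposition~\ref{scalinglimit}. The entries of $D_n^{-1}H_nD_n^{-1}$ are
$$\frac{1}{nk_n}\sum_t \frac{X_{t-1}^2}{1+X_{t-1}},\qquad \frac1n\sum_t\frac{X_{t-1}}{1+X_{t-1}},\qquad \frac{k_n}{n}\sum_t\frac{1}{1+X_{t-1}}.$$
The first is $\frac1n\sum_t Y_{t-1}+o_p(1)$, which should converge to $\mathbb{E}[\Upsilon_\infty]=\mu$. The second tends to $1$ because $X_{t-1}$ is typically of order $k_n$. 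The third equals $\frac1n\sum_t (1/k_n+Y_{t-1})^{-1}$, which should converge to $\mathbb{E}[1/\Upsilon_\infty]$.

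For the first and second entries, I would reuse the ergodic-average arguments from the proof of Theorem~\ref{cls}(a). Those arguments combine the block local-to-unity decomposition with Proposition~\ref{stationaritycir}(i), giving $\frac{k_n}{n}\sum_{j}\int_{\text{block }j} f(\Upsilon_s)\,ds\to \mathbb{E} f(\Upsilon_\infty)$. The function $y\mapsto 1/y$ is unbounded near $0$, so I would truncate it as $1/\max(y,\varepsilon)$. The truncated average converges by the previous argument. The remainder $\frac1n\sum_t (1/k_n+Y_{t-1})^{-1}\mathbf 1\{Y_{t-1}<\varepsilon\}$ must then be controlled uniformly in $n$ as $\varepsilon\to0$.

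This near-zero control is where I expect the main obstacle. I would first bound the marginal occupation of small values. The gamma law with shape $2\mu/\sigma^2>1$ gives $\mathbb{E}[\Upsilon_\infty^{-1}\mathbf 1\{\Upsilon_\infty<\varepsilon\}]=O(\varepsilon^{2\mu/\sigma^2-1})\to0$. The task is to transfer this bound to the discrete chain, using a uniform-in-$t$ bound on $\mathbb{E}[(1/k_n+Y_t)^{-1}]$. I would try to get this bound from the affine Laplace transform, via the identity
$$\mathbb{E}[1/(c+Y)]=\int_0^\infty e^{-cu}\,\mathbb{E}[e^{-uY}]\,du .$$
Iterating the conditional LT \eqref{car} shows that $\mathbb{E}[e^{-uX_t/k_n}]$ decays like $(1+u\sigma^2/2)^{-2\mu/\sigma^2}$ up to $o(1)$ errors. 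This decay is integrable precisely when $2\mu/\sigma^2>1$, which explains the hypothesis of the proposition.

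\textbf{Score and conclusion.} The vector $D_n^{-1}v_n$ has components
$$\frac{1}{\sqrt{nk_n}}\sum_t \frac{W_tX_{t-1}}{1+X_{t-1}},\qquad \sqrt{\frac{k_n}{n}}\sum_t\frac{W_t}{1+X_{t-1}},$$
and both are martingale sums. I would apply the martingale CLT. The conditional variance of $W_t$ is $\beta_nX_{t-1}+\delta_n=\sigma^2 k_n Y_{t-1}(1+o(1))+O(1)$. Hence the conditional quadratic variation is $\sigma^2$ times the same three averages as in the design matrix, up to $o_p(1)$; the $\delta_n$ terms are negligible after the $k_n/n$ scaling, given the integrability just established. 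The score therefore converges to $\mathcal N(0,\sigma^2 G)$, where $G$ is the limit of the design matrix, namely
$$G=\begin{bmatrix}\mathbb{E}[\Upsilon_\infty]&1\\ 1&\mathbb{E}[1/\Upsilon_\infty]\end{bmatrix}.$$
For the Lindeberg condition, I would use Assumption~\ref{AssAffineMomentBis}: it gives $\mathbb{E}[|W_t|^p\mid X_{t-1}]\lesssim (1+X_{t-1})^{p/2}$ for the affine sums, and dividing by $(1+X_{t-1})^{p}$ keeps the second component controlled.

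Slutsky's lemma then gives the limit $G^{-1}(\sigma^2G)G^{-1}=\sigma^2G^{-1}$, as claimed. Finally, $\mathbb{E}[1/\Upsilon_\infty]=(\mu-\sigma^2/2)^{-1}$ follows from the gamma moments with shape $2\mu/\sigma^2$ and rate $2/\sigma^2$. The $(1,1)$ entry of $\sigma^2G^{-1}$ is
$$\sigma^2\frac{\mathbb{E}[1/\Upsilon_\infty]}{\mu\,\mathbb{E}[1/\Upsilon_\infty]-1}=\frac{\sigma^2}{\mu-(\mu-\sigma^2/2)}=2 .$$
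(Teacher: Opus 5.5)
Your route is the paper's own. You rescale $H_n$ and $v_n$, get the design limit $\Omega_1$ (your $G$) from ergodic averages of $y$ and $1/y$, get $\mathcal N(0,\sigma^2\Omega_1)$ for the score from the martingale CLT, and let the sandwich collapse. The paper handles the third design entry only through the heuristic $\frac{k_n}{n}\sum_t(1+X_{t-1})^{-1}\approx\frac{k_n}{n}\int_0^{n/k_n}\Upsilon_s^{-1}\,ds$. You rightly single out the near-zero control as the real difficulty, but the bound you propose for it is false.

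Under the paper's initial condition $X_0=o(k_n)$ (say $X_0=0$), $\mathbb{E}[(1/k_n+Y_t)^{-1}]=k_n\,\mathbb{E}[(1+X_t)^{-1}]$ is not bounded uniformly in $t$. By Jensen and $\mathbb{E}X_t\le\mu_n t$, it is at least $k_n/(1+\mu_n t)$, which is of order $k_n$ for $t=O(1)$. Summing over the first block gives
\[
\frac1n\sum_{t=1}^{k_n}\mathbb{E}\big[(1/k_n+Y_{t-1})^{-1}\big]\ \ge\ \frac{k_n}{n}\sum_{s=0}^{k_n-1}\frac{1}{1+\mu_n s}\ \asymp\ \frac{k_n\log k_n}{n}.
\]
The same order holds in probability: the near-critical start is self-similar, so $\sum_{t\le k_n}(1+X_{t-1})^{-1}\approx\mathbb{E}[1/\Upsilon_\infty]\log k_n$. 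In the diffusion picture this is the non-integrable singularity $\mathbb{E}[1/\Upsilon_s]=[(\mu-\sigma^2/2)(1-e^{-s})]^{-1}$ at $s=0$ for the CIR started at $0$. Indeed, the paper's integral $\int_0^{n/k_n}\Upsilon_s^{-1}\,ds$ is infinite.

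Hence your remainder $\frac1n\sum_t(1/k_n+Y_{t-1})^{-1}\mathbf 1\{Y_{t-1}<\varepsilon\}$ is not small uniformly in $n$ for fixed $\varepsilon$ unless $k_n\log k_n=o(n)$. The same term also sits in the $(2,2)$ entry of the score's quadratic variation. You must split off $t\le Mk_n$ explicitly and impose a condition that kills it. Either $k_n\log k_n=o(n)$ (true for $k_n=n^\tau$) or an initial value of order $k_n$, such as a stationary start, will do. If instead $k_n\log k_n/n\to L>0$, both $(2,2)$ entries are inflated by the factor $1+L$, and the stated covariance is no longer the limit.

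Even for $t\ge Mk_n$, the Laplace-transform step is too loose as written. Since $\mathbb{E}[(1/k_n+Y_t)^{-1}]=\int_0^\infty e^{-u/k_n}\mathbb{E}[e^{-uY_t}]\,du$ runs over $u$ up to order $k_n$, an additive $o(1)$ error relative to $(1+u\sigma^2/2)^{-2\mu/\sigma^2}$ costs $o(k_n)$. What you need is a domination $\mathbb{E}[e^{-uY_t}]\le C(1+u)^{-q}$ for $0\le u\le k_n$, uniformly in $n$ and $t\ge Mk_n$, with some $q\in(1,2\mu/\sigma^2)$. For $u\asymp k_n$ the transform argument $u/k_n$ is of order one, so this bound cannot come from the diffusion limit. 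It has to come from lower bounds on the iterates $u_{m+1}=a_n(u_m)$: concavity gives $a_n(v)\ge v\,a_n(1)$ on $[0,1]$, after which the Riccati regime applies.

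A bounded first inverse moment also does not give the uniform integrability your truncation needs. Use the same identity with weight $u^{q'-1}/\Gamma(q')$, for some $1<q'<q$, to bound $\mathbb{E}[(1/k_n+Y_t)^{-q'}]$ uniformly. With these repairs, the rest of your plan goes through: the other two design entries, the score CLT with its Lindeberg check, and the computation giving the value $2$.
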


\begin{proof}
See Appendix~\ref{prooftransient}.
\end{proof}

Thus, the WLS estimators of both $\alpha_n$ and $\mu_n$ achieve the same convergence rates as their OLS counterparts. We now compare their asymptotic covariance matrices. By Theorem~\ref{cls}, the rescaled OLS estimator $ \sqrt{nk_n} (\hat{\alpha}_n-\alpha_n)$ has asymptotic variance $2\left(\frac{\sigma^2}{\mu}+1\right)$, while Proposition~\ref{transient} shows that the corresponding WLS estimator $\sqrt{nk_n} ( \hat{\hat{\alpha}}_n-\alpha_n)$ has asymptotic variance equal to $2$. Hence, WLS is asymptotically more efficient, with relative efficiency $\frac{\sigma^2}{\mu}+1$, which is between 1 and 3 under the assumption $\frac{2\mu}{\sigma^2}>1$. 

The main drawback of the WLS estimator is that Proposition~\ref{transient} requires $2\mu > \sigma^2$, and its properties remain an open question when $2\mu  \leq  \sigma^2$. This issue also exists in affine count processes with an exact unit root $(\alpha_n=1$, $\beta_n=\sigma^2)$: \cite{wei1990estimation, wei1991convergence} report that, when $2\mu  \leq \sigma^2$, the convergence rate of the WLS estimator \eqref{wls1} is unknown.

\subsubsection{(Quasi) Maximum Likelihood (QML)}
The QML approach consists in using a simple, but potentially misspecified conditional distribution to estimate the parameters. Let $\Theta$ denote the vector of all model parameters, and define the $\log$-quasi-likelihood function by 
$$
L(\Theta)=\sum_{t=1}^n  { l(X_{t} \mid X_{t-1},  {\Theta})},
$$
where $l(X_{t} \mid X_{t-1},  {\Theta})$ is the log of the conditional pmf/pdf. Expanding the score around the true value $\Theta_0$, the QML estimator $\tilde{\Theta}$ satisfies the linear system
%A Taylor expansion around the true parameter value $\Theta_0$ yields that the MLE $\tilde{\Theta}$ satisfies the linear system 
\begin{equation}
\label{I0}
\left(\frac{\partial^2 L}{\partial \Theta \partial \Theta'}\right) (\tilde{\Theta}-\Theta_0)=-\frac{\partial L}{\partial \Theta}.
\end{equation}
The asymptotic properties of the QML estimator, therefore, depend on the functional form of the conditional distribution.

Because we are mainly interested in $(\alpha_n, \mu_n, \sigma^2)$, the misspecified distribution should ideally not contain extra parameters. Let us now discuss the Gaussian and the Poisson QML. 

\paragraph{Gaussian QMLE. }The WLS in eq.\eqref{wls1} can be viewed as a simplified version of the Gaussian QMLE given by:
$$
(\hat{\hat{\alpha}}_n,\hat{\hat{\mu}}_n, \hat{\hat{\beta}}_n,\hat{\hat{\delta}}_n):=\arg \min_{(\alpha, \mu, \beta, \delta)} \sum_{t=1}^n \Big[ \frac{(X_t-\alpha X_{t-1}-\mu)^2}{\delta+ \beta X_{t-1}}+\ln (\delta+ \beta X_{t-1}) \Big].
$$
The downside of this raw Gaussian QMLE is that it has no closed form expression. This prompts \cite{wei1990estimation} to replace $\frac{(X_t-\alpha X_{t-1}-\mu)^2}{\delta+ \beta X_{t-1}}$ by $\frac{(X_t-\alpha X_{t-1}-\mu)^2}{\beta+ \beta X_{t-1}}$, the two terms being asymptotically equivalent since $X_{t-1}$ is of order $k_n$. Minimizing with respect to $\alpha$ and $\mu$ only leads to the WLS.

 %Unlike OLS and WLS, the ML estimator jointly estimates the full parameter vector of the affine process, rather than a subset such as.
%Compared with OLS and WLS, %the ML approach has two fundamental differences. First, while OLS and WLS only estimate 
%the maximum likelihood (ML) approach differs in that it jointly estimates the full parameter vector of the affine process, rather than a subset such as $(\alpha_n, \mu_n, \sigma^2)$.  %Thus the OLS/WLS and ML estimators are usually not directly comparable, with the asymptotic properties of the latter potentially much harder to derive and highly model dependent. 
 
%In particular, if the affine model has at least three parameters, then the ML estimator is expected to have significantly different properties from the OLS and WLS. 
%Furthermore, the conditional pmf/pdf of an affine process could be highly nonlinear. 
%The following two examples illustrate that even when tractable, the MLE does not necessarily offer a clear advantage over simpler estimators.
\paragraph{Poisson QMLE.}
In this case, %the parameter vector reduces to $(\alpha_n,\mu_n)$ (since $\sigma=1$), %Let us denote by $\tilde{\Theta}=(\tilde{\alpha}_n,\tilde{\mu}_n)$ the MLE, defined by the first-order condition:\begin{align*}0=   \sum_{t=1} \frac{\partial l(X_{t} \mid X_{t-1},  {\theta})}{\partial \Theta} \mid_{\tilde{\Theta}},\end{align*}
%The rest of the derivation depends on the parametric form of the conditional pmf/pdf. Let us look at some examples.
%and
the conditional $\log$-quasi-likelihood depends only on two parameters $\alpha_n$ and $\mu_n$, and is given by:
$$l(X_{t} \mid X_{t-1},  {\Theta})=-(\alpha_n X_{t-1}+\mu_n)+X_t \ln (\alpha_n X_{t-1}+\mu_n)-\ln (X_t!).$$ 
Then \eqref{I0} implies % \begin{align*}   \sum_{t=1}^n   X_{t-1}& = \sum_{t=1}^n  \frac{X_t X_{t-1}}{\tilde{\alpha}_n X_{t-1}+\tilde{\mu}},      \sum_{t=1}^n   1& = \sum_{t=1}^n  \frac{X_t }{\tilde{\alpha}_n X_{t-1}+\tilde{\mu}}.\end{align*}
%By a Taylor's expansion around the true value $(\alpha_n, \mu)$, one gets: \begin{align*}    \sum_{t=1}^n   X_{t-1}& = \sum_{t=1}^n  \frac{X_t X_{t-1}}{{\alpha}_n X_{t-1}+{\mu}}-\sum_{t=1}^n  \frac{X_t X^2_{t-1}}{({\alpha}_n X_{t-1}+{\mu})^2}(\tilde{\alpha}_n-\alpha_n)-\sum_{t=1}^n  \frac{X_t X_{t-1}}{({\alpha}_n X_{t-1}+{\mu})^2}(\tilde{\mu}_n-\mu), \\     \sum_{t=1}^n   1& = \sum_{t=1}^n  \frac{X_t }{{\alpha}_n X_{t-1}+{\mu}}-\sum_{t=1}^n  \frac{X_t X_{t-1}}{({\alpha}_n X_{t-1}+{\mu})^2}(\tilde{\alpha}_n-\alpha_n)-\sum_{t=1}^n  \frac{X_t }{({\alpha}_n X_{t-1}+{\mu})^2}(\tilde{\mu}_n-\mu).\end{align*}
%Or equivalently:
 \begin{align*}
\begin{bmatrix}
    \tilde{\alpha}_n-\alpha_n \\
    \tilde{\mu}_n-\mu_n
\end{bmatrix}
=
\begin{bmatrix}
\sum_{t=1}^n  \frac{X_t X^2_{t-1}}{({\alpha}_n X_{t-1}+{\mu_n})^2} & \sum_{t=1}^n  \frac{X_t X_{t-1}}{({\alpha}_n X_{t-1}+{\mu_n})^2} \\
\sum_{t=1}^n  \frac{X_t X_{t-1}}{({\alpha}_n X_{t-1}+{\mu_n})^2} & \sum_{t=1}^n  \frac{X_t }{({\alpha}_n X_{t-1}+{\mu_n})^2} 
\end{bmatrix}^{-1}
\begin{bmatrix}
    \sum_{t=1}^n  \frac{W_t X_{t-1}}{{\alpha}_n X_{t-1}+{\mu_n}}\\
    \sum_{t=1}^n   \frac{W_t }{\alpha_n X_{t-1} +\mu_n}
\end{bmatrix}
\end{align*}
Since $\frac{1}{{\alpha}_n X_{t-1}+{\mu_n}} \approx \frac{1}{1+X_{t-1}}$, it follows that the QMLE behaves similarly to the WLS estimator under the condition $2\mu>\sigma^2$. That is, in the mildly stationary case,
$
  \big[\sqrt{nk_n}  (  \tilde{\alpha}_n-\alpha_n),
   \sqrt{n/k_n}  ( \tilde{\mu}_n-\mu_n)\big]
'$ converges to the same weak limit as in Proposition~\ref{transient}. %, with the extra constraint that $\sigma^2=1$ due to the Poisson assumption. In other words, in the Poisson case, the WLS is asymptotically as efficient as the MLE.

%In other words, for the Poisson model, the usual efficiency gain of the MLE over competing estimators in a stationary framework becomes smaller and smaller, because for large $n,$ the conditional distribution resembles more and more the 

In the mildly stationary INARCH regime, the QMLE therefore provides no efficiency gain over the WLS estimator. This is consistent with Theorem~\ref{general}, which says that the conditional Poisson distribution of $X_t$ given $X_{t-1}$ is approximately Gaussian for large $X_{t-1}$. Since the WLS estimator \eqref{wls1} effectively acts as a Gaussian QMLE, both have the same asymptotic distribution.

\section{Simulations}
\label{sec:6simulations}
%For each of the simulation experiments below, we assume without loss of generality that \(\gamma = 1\). 
 
\subsection{Non-normality in the local-to-unity case} %5.1

We begin by considering two local-to-unity scenarios $\alpha_n = 1 \pm \frac{\gamma}{n}$, with $\gamma$ and $\mu_n$ both set to 1. %To maintain consistency with the mildly integrated notation, we write\[\alpha_n = 1 \pm \frac{\gamma}{k_n},\] and restrict our attention to the local-to-unity case by setting $k_n = n$. Under this choice, the scaling $\sqrt{n k_n}$ reduces to $n$, so that the normalization for $\hat{\alpha}_n$ coincides with the standard local-to-unity rate.

We use the INARCH model (with $\sigma^2 = 1$) to simulate $M = 50{,}000$ independent trajectories of $(X_t)$, each of length $n = 3{,}000$, and compute the OLS estimators of $\alpha_n$ and $\mu_n$.

Figure~\ref{fig:case_3_local} plots the empirical histogram of the scaled and centered estimators
 $\left(n(\hat{\alpha}_n - \alpha_n), \hat{\mu}_n - \mu_n\right)
$. For a given trajectory of $(X_t)$, we also plot the empirical histogram of $n(\hat{\alpha}^b_n - \hat{\alpha}_n), b=1,\dots,B=50{,}000$. We see that while the bootstrap distribution is close to normality, the distributions of $\hat{\alpha}_n$ and $\hat{\mu}_n$ are clearly skewed and non-Gaussian, in line with Theorem~\ref{thm1}.

\begin{figure}[H]
\centering
\includegraphics[scale=0.5]{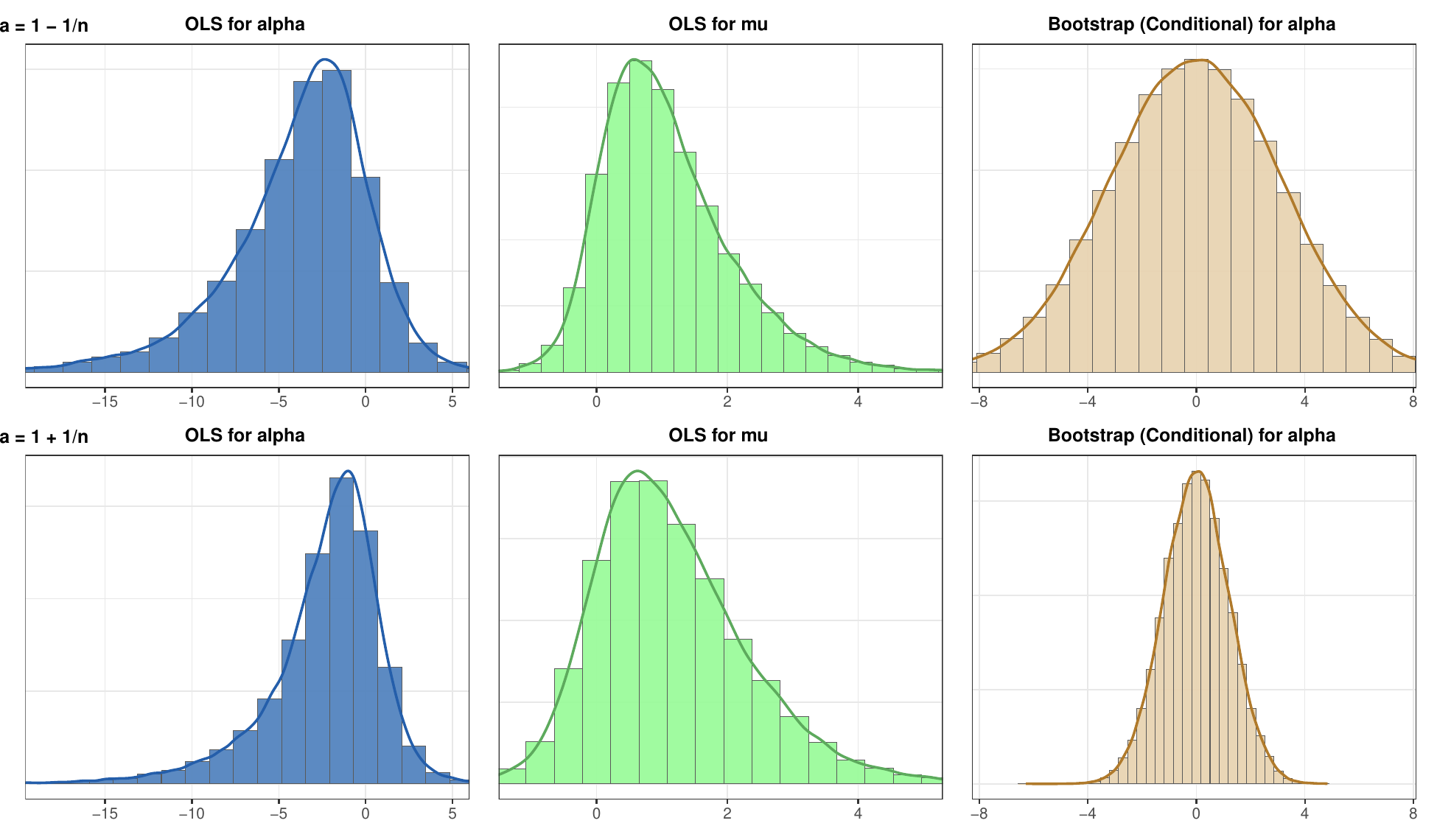}
\caption{Empirical distributions of $\left(n(\hat{\alpha}_n - \alpha_n)\right)$ (left panel) and of $(\hat{\mu}_n - \mu_n)$ (mid panel), as well as the bootstrap distribution of $n(\hat{\alpha}^b_n - \hat{\alpha}_n)$ for a given trajectory of $(X_t)$ (right panel). The upper panel corresponds to \(\alpha_n = 1 - \frac{1}{n}\), and the lower panel corresponds to \(\alpha_n = 1 + \frac{1}{n}\).}
\label{fig:case_3_local}
\end{figure}

We also report below the mean and variance of the sample, and compare them with their theoretical counterparts, obtained by tabulating the limiting distribution in Theorem~\ref{thm1} using $100{,}000$ simulated paths of the CIR process and $5{,}000$ time steps between 0 and 1. Table~\ref{tab:section61-sample-theory} shows that the sample and theoretical moments match very well, and as expected, $\hat{\mu}_n$ is inconsistent. Note that $n(\hat{\alpha}_n-\alpha_n)$ and $n(1-\hat{\alpha}_n)$ share the same sample variance  because they only differ by a constant, and this variance is very large (equal to 17.7 in the case where $\alpha_n=1-1/n$), confirming that $\gamma$ cannot be consistently estimated by $n(1-\hat{\alpha}_n)$. 
\begin{table}[H]
\centering
\small
\begin{tabular}{lrrrrrrrr}
\hline
Case
& \multicolumn{4}{c}{$n(\hat{\alpha}_n-\alpha_n)$}
& \multicolumn{4}{c}{$\hat{\mu}_n-\mu_n$} \\
\cline{2-5}\cline{6-9}
& \shortstack{Sample\\mean}
& \shortstack{Asymp. \\mean}
& \shortstack{Sample\\var.}
& \shortstack{Asymp.\\var.}
& \shortstack{Sample\\mean}
& \shortstack{Asymp.\\mean}
& \shortstack{Sample\\var.}
& \shortstack{Asymp.\\var.} \\
\hline
$\alpha_n = 1 - 1/n$
& -3.8536 & -3.8697 & 17.7075 & 18.0376
& 1.0720 & 1.0722 & 1.0037 & 1.0196 \\
$\alpha_n = 1 + 1/n$
& -2.3838 & -2.3600 & 11.4331 & 11.3631
& 1.1476 & 1.1441 & 1.4187 & 1.4136 \\
\hline
\end{tabular}
\caption{Sample and theoretical mean and variance with $n = 3{,}000$ and $M = 50{,}000$.}
\label{tab:section61-sample-theory}
\end{table}

\subsection{Normality in the mildly stationary case} %5.2
\label{sec:simulations62}
We now consider the mildly integrated setting, defined by 
\[
\alpha_n = 1 + \frac{\gamma}{k_n},\qquad  k_n = n^{\tau}, \qquad \tau \in (0,1), \qquad \gamma = -1, \qquad \mu_n=1.
\]
%First, the true parameter values \((\alpha_n, \mu_n)\) are computed. 
%Starting from \(X_0 = 0\), w
We generate \(M = 50{,}000\) independent trajectories of length \(n = 3{,}000\) from the INARCH model. Then we compute:

\begin{enumerate}
    \item the sample distribution of $\left(\sqrt{n k_n}(\hat{\alpha}_n - \alpha_n), \quad \sqrt{\frac{n}{k_n}}(\hat{\mu}_n - \mu_n)\right)$, which according to Theorem~\ref{cls} is asymptotically normal. 
    \item the plug-in counterpart $\left(\sqrt{\frac{n}{1- \hat{\alpha}_n}}(\hat{\alpha}_{n}-\alpha _{n}),
\sqrt{n (1- \hat{\alpha}_n)}(\hat{\mu}_n-\mu_n)\right)$ given in Theorem~\ref{plugin}, in which $k_n$ is replaced by $\frac{1}{1-\hat{\alpha}_n}$. We note that this rescaled estimator can only be computed if $\hat{\alpha}_n < 1$. Under the parameter setting described above, we find that around 1 percent of the estimates of $\hat{\alpha}_n$ are greater than 1. 
    \item the bootstrap distribution of $\left(\sqrt{n k_n}(\hat{\alpha}^b_n - \hat{\alpha}_n), \quad \sqrt{\frac{n}{k_n}}(\hat{\mu}^b_n - \hat{\mu}_n)\right)$ 
    obtained with $B=50{,}000$ and an arbitrarily chosen trajectory of $(X_t)$, whose OLS estimator is denoted by $(\hat{\alpha}_n, \hat{\mu}_n)$. The random weights \(\left(\delta_t^b\right)\) are drawn from an \(\mathrm{Exp}(1)\) distribution. %Note that the bootstrap estimator is centered around $(\hat{\alpha}_n, \hat{\mu}_n)$, which is itself random. 
\end{enumerate}
%Note that this first distribution is given for illustration purpose only, and cannot be used in practice, since it requires the knowledge of $k_n$.

Figure~\ref{fig:mild.int.comp}  plots these histograms and kernel density estimates for $\tau=0.4$ (upper panel) and $\tau=0.8$ (lower panel), respectively. For $\tau=0.4$, the six sample distributions are closer to normality, and within the same row, the shapes of the distributions are similar. For $\tau=0.8$ (which is quite close to the local-to-unity case), we see that the sample distributions are still quite far away from normality, except for the bootstrap distribution. 

% requires \usepackage{booktabs}

%The empirical distribution of the scaled estimators is displayed in the left panel of Figure~\ref{fig:mild.int.comp}.

%For the bootstrap analysis, a single trajectory of length \(n = 3\,000\) is selected. Based on this trajectory, \(B = 50{,}000\) bootstrap samples are generated using \eqref{cls_bootstrap}. The bootstrap relies on weights \(\left(\delta_t^b\right)\) drawn independently from an \(\mathrm{Exp}(1)\) distribution. The resulting bootstrap estimators \((\hat{\alpha}_n^b, \hat{\mu}_n^b)\) are used to construct the bootstrap distribution, shown in the right panel of Figure~\ref{fig:mild.int.comp}.

\begin{figure}[H]
\centering
\subfloat[$\tau=0.4$]{\includegraphics[width=\textwidth,height=0.42\textheight,keepaspectratio]{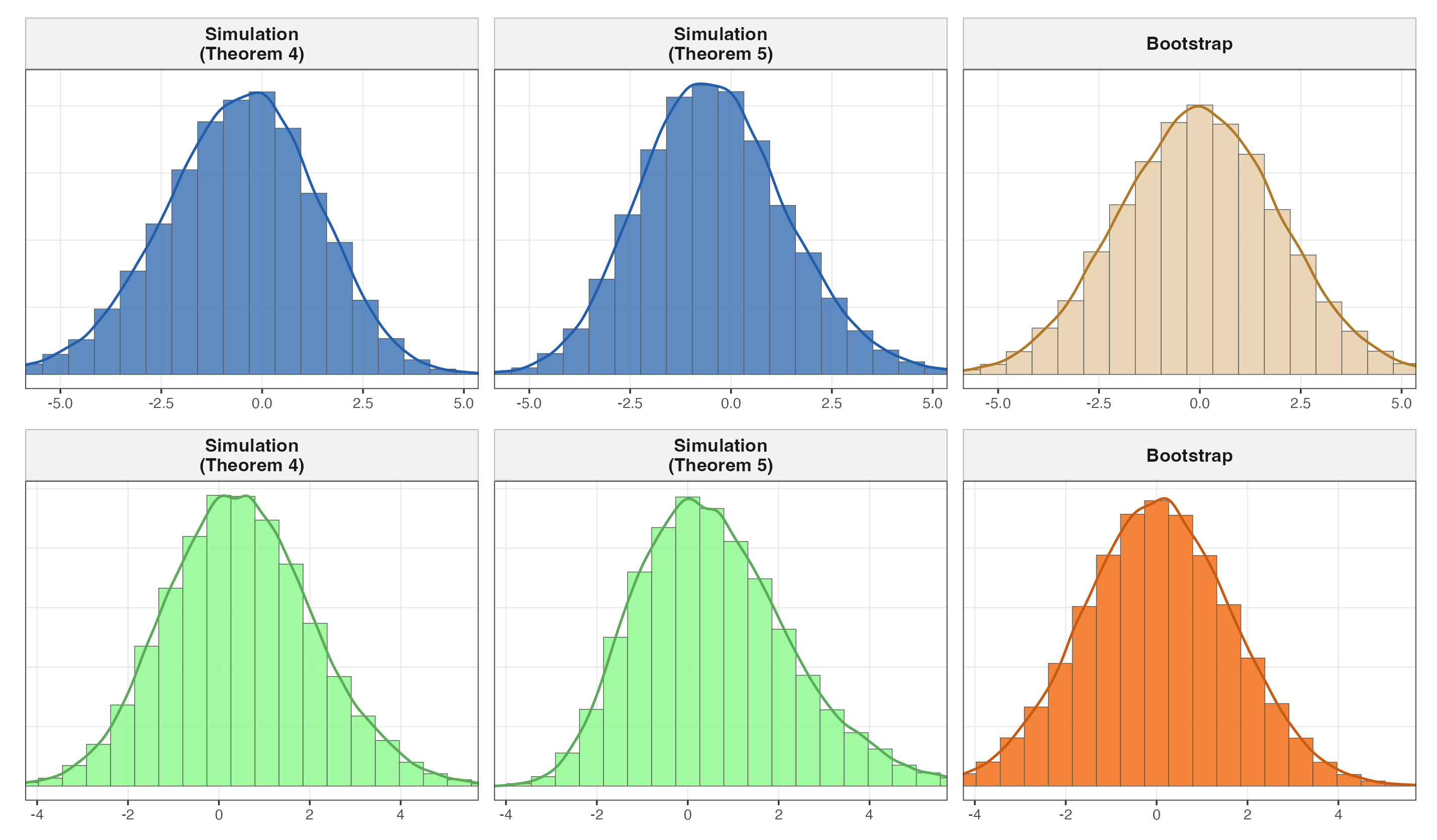}}\\[0.5em]
\subfloat[$\tau=0.8$]{\includegraphics[width=\textwidth,height=0.42\textheight,keepaspectratio]{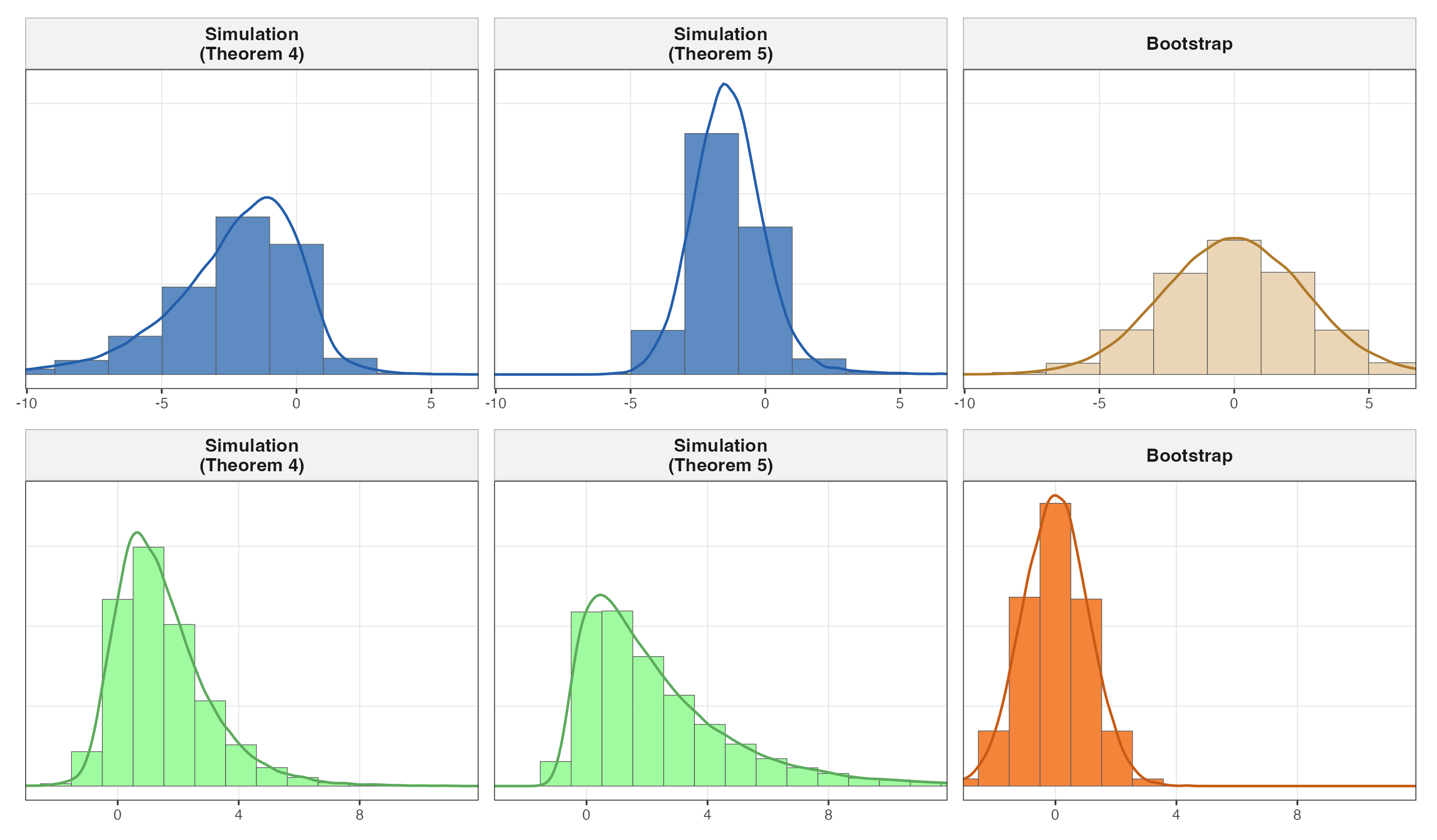}}
\caption{Sample distributions of the rescaled OLS estimators in the mildly stationary case for $\tau=0.4$ (panel (i)) and $\tau=0.8$ (panel (ii)). In each panel: upper row is $\sqrt{n k_n}(\hat{\alpha}_n - \alpha_n)$, lower row is $\sqrt{n/k_n}(\hat{\mu}_n - \mu_n)$; the left column shows the sample distribution from $M=50{,}000$ trajectories, the middle column the plug-in counterpart (Theorem~\ref{plugin}), and the right column the bootstrap counterpart (Theorem~\ref{bootstrap}). All simulations use $n=3{,}000$.}
\label{fig:mild.int.comp}
\end{figure}

We also report the sample mean and covariance matrix of these distributions in Table~\ref{tab:section62-tau04-tau08-booktabs}.

 \begin{table}[H]
\centering
\small
\setlength{\tabcolsep}{5pt}
\renewcommand{\arraystretch}{1.15}
\begin{tabular}{c l c c}
\toprule
$\tau$ & Distribution & Mean vector $(\alpha,\mu)$ & Covariance matrix \\
\midrule
0.4
& \shortstack{Benchmark\\(Theorem~\ref{cls})}
& $\begin{pmatrix}
-0.6376\\
\phantom{-}0.5358
\end{pmatrix}$
& $\begin{pmatrix}
\phantom{-}3.5847 & -2.5005\\
-2.5005 & \phantom{-}2.5829
\end{pmatrix}$ \\
0.4
& \shortstack{Plug-in\\(Theorem~\ref{plugin})}
& $\begin{pmatrix}
-0.4548\\
\phantom{-}0.6752
\end{pmatrix}$
& $\begin{pmatrix}
\phantom{-}3.2947 & -2.5351\\
-2.5351 & \phantom{-}2.9409
\end{pmatrix}$ \\
0.4
& \shortstack{Bootstrap\\(Theorem 6)}
& $\begin{pmatrix}
-0.0104\\
\phantom{-}0.0070
\end{pmatrix}$
& $\begin{pmatrix}
\phantom{-}3.1076 & -1.9999\\
-1.9999 & \phantom{-}1.9384
\end{pmatrix}$ \\
\addlinespace
0.8
& \shortstack{Benchmark\\(Theorem~\ref{cls})}
& $\begin{pmatrix}
-2.3781\\
\phantom{-}1.4943
\end{pmatrix}$
& $\begin{pmatrix}
\phantom{-}6.2898 & -2.1388\\
-2.1388 & \phantom{-}2.3414
\end{pmatrix}$ \\
0.8
& \shortstack{Plug-in\\(Theorem~\ref{plugin})}
& $\begin{pmatrix}
-1.3482\\
\phantom{-}2.4723
\end{pmatrix}$
& $\begin{pmatrix}
\phantom{-}2.2620 & -2.3732\\
-2.3732 & \phantom{-}7.7503
\end{pmatrix}$ \\
0.8
& \shortstack{Bootstrap\\(Theorem 6)}
& $\begin{pmatrix}
-0.0062\\
\phantom{-}0.0060
\end{pmatrix}$
& $\begin{pmatrix}
\phantom{-}3.5665 & -2.5221\\
-2.5221 & \phantom{-}2.4345
\end{pmatrix}$ \\
\bottomrule
\end{tabular}
\caption{Sample mean and covariance matrix for the sample distributions in Figure~\ref{fig:mild.int.comp}. As a comparison, the theoretical value of the covariance matrix is $\begin{pmatrix}
\phantom{-}4 & -3\\
-3 & \phantom{-}3
\end{pmatrix}$.}
\label{tab:section62-tau04-tau08-booktabs}
\end{table}
We see that the OLS estimator has a finite sample bias that is negative for $\alpha_n$, and positive for $\mu_n$. The negative finite sample bias for $\alpha_n$ is consistent with similar results found for linear AR(1) models; see, for example, \cite{marriott1954bias} for the stationary case and \cite{stoykov2019least} for the mildly stationary case, and the positive bias for $\mu_n$ is simply a consequence of the normal equation of the OLS estimator. Furthermore, Table~\ref{tab:section62-tau04-tau08-booktabs} shows that as $\tau$ increases from $\tau=0.4$ to $\tau=0.8$, the magnitude of the biases increases, and the sample covariance matrices deviate from their common theoretical values. Note that the bootstrap mean vector is smaller in magnitude, but this simply reflects the fact that the bootstrap estimator is centered around $(\hat{\alpha}_n, \hat{\mu}_n)$, with the latter being random and biased in finite samples. Note, also, that with $\tau=0.8$, $\alpha_n$ is very close to 1, and both the plug-in and the bootstrap methods estimate quite poorly the asymptotic variance of $\hat{\mu}_n$ (last two rows of Table 3), even for a sample size of $n=3,000$. 

\subsection{Empirical Coverage}
\label{sec:coverage}
Let us focus on the mildly stationary model and evaluate the empirical coverage of nominal 90\% confidence intervals for \((\alpha_n,\mu_n)\). We use the simulated trajectories of Section~\ref{sec:simulations62}, and compute the empirical coverage for the parameter $\alpha_n$, defined as:
\[
\text{Coverage}
= \frac{1}{N} \sum_{i=1}^{N} \mathbbm{1}_{\bigl\{\alpha_{\text{true}} \in \text{CI}^{(i)}\bigr\}},
\]
and analogously for \(\mu_n\). Similarly to Section~\ref{sec:simulations62}, we use two methods to compute confidence intervals: 
\begin{enumerate}
 %   \item The Gaussian approximation of Theorem 4. Note that this is for illustration purpose only, and cannot be used to real data in practice, since it requires the knowledge of $k_n$.
    \item The Gaussian approximation of Theorem~\ref{plugin}, in which $k_n$ is replaced by $\frac{1}{1-\hat{\alpha}_n}$. In the following, we call this the plug-in approach. Here, the confidence interval is built using the Gaussian distribution assumption, whose variance parameter depends, among others, on $\sigma^2$, which %Note that the Gaussian approximation of Theorem~\ref{cls} cannot be applied directly to real data due to the unknown value of $k_n$. 
   is either fixed as in the INARCH specification, or estimated using Proposition~\ref{convergencebeta}. Empirically, we find that both approaches lead to almost identical coverage rates and thus we will only report results with fixed $\sigma^2=1$.   %Moreover, it can only be applied if, for the data at hand, $\hat{\alpha}_n$ is effectively smaller than 1. 
    \item The bootstrap estimator in Theorem~\ref{bootstrap}, for a given trajectory of $(X_t)$. In this case, the confidence interval can be computed using either the empirical distribution of the bootstrapped samples, or the Gaussian approximation. Since we have shown in Section~\ref{sec:simulations62} that the bootstrap distribution is very close to (conditional) normality, the empirical coverage we obtain is almost identical across these two methods. Thus, we retain the normal approximation for simplicity. In particular, for the bootstrap method, we do not use the plug-in estimator $\hat{k}_n$ of $k_n$. 
%  Note that alternatively, we could use the sample bootstrap distribution of $(\hat{\alpha}^b_n, \hat{\mu}^b_n)$, $b=1,\dots,B$ to construct the confidence intervals.   
\end{enumerate}
%For the first method, the parameter $\sigma^2$ should be estimated (or fixed, if an assumption such as the INARCH model is made), since the covariance matrix of the limiting distribution depends on $\sigma^2$. On the other hand, for the bootstrap method, the covariance matrix can be directly estimated using $B$ bootstrap samples, without estimating $\sigma^2$. 
Since the mildly stationary regime imposes $\alpha_n<1$, the upper end of each confidence interval for $\alpha_n$ is capped at unity, analogous to the lower-bound truncation at 1.001 used by \citet{yu2025inference} in the mildly explosive setting. Table~\ref{fig:coverage.summary} reports the empirical coverage. Note, however, that for this simulation exercise, capping does not change the coverage rate, since the true DGP always corresponds to the case where $\alpha_n<1$. 

\begin{table}[ht]
\centering
\small
\begin{tabular}{ccccccc}
\hline
$n$ & $\tau$ &  $\alpha_n$ &\shortstack{$\alpha_n$\\Plug-in} & \shortstack{$\alpha_n$\\Bootstrap} & \shortstack{$\mu_n$\\Plug-in} & \shortstack{$\mu_n$\\Bootstrap} \\
\hline
%200 & 0.4 & 0.9200 & 0.8475 & 0.9350 & 0.8850 \\
%200 & 0.8 & 0.8475 & 0.7400 & 0.9400 & 0.7600 \\
%500 & 0.4 & 0.9175 & 0.8900 & 0.9575 & 0.9175 \\
%500 & 0.8 & 0.8450 & 0.7675 & 0.9400 & 0.7575 \\
75   & 0.4 & 0.822 & 0.943 & 0.818 & 0.955 & 0.880 \\
75   & 0.8 & 0.968 & 0.863 & 0.723 & 0.968 & 0.745 \\
\midrule
200  & 0.4 & 0.880 & 0.963 & 0.875 & 0.945 & 0.880 \\
200  & 0.8 & 0.986 & 0.875 & 0.790 & 0.960 & 0.772 \\
\midrule
2000 & 0.4 & 0.952 & 0.927 & 0.910 & 0.940 & 0.907 \\
2000 & 0.8 & 0.998 & 0.833 & 0.790 & 0.925 & 0.770 \\
\hline
\end{tabular}
\caption{Empirical coverage probabilities of nominal $90\%$ confidence intervals in Section~\ref{sec:coverage}, based on $N=400$ Monte Carlo replications and $B=50{,}000$ bootstrap draws. The asymptotic intervals use Theorem~\ref{plugin}, and the bootstrap-normal intervals replace empirical bootstrap quantiles by the Gaussian approximation $\hat{\theta}\pm z_{0.95}\widehat{\mathrm{sd}}_{\mathrm{boot}}$.}
\label{fig:coverage.summary}
\end{table}
We have explained in Section~6.2 that in general, $\hat{\alpha}_n$ (resp. $\hat{\mu}_n$) has a negative (resp. positive) finite sample bias. This leads to almost systematic undercoverage of both $\alpha_n$ and $\mu_n$ by the bootstrap method. For the confidence intervals computed using Theorem 5, the situation is more complicated. Indeed, the negative bias on $\alpha_n$ also leads to a bias on $k_n=\frac{1}{1-\alpha_n}$. Thus, compared to the true confidence interval, the plug-in confidence interval is on average shifted downwards, but also widened. These two biases imperfectly offset. For instance, for parameter $\alpha_n$, Theorem 5 overcovers when $\tau=0.4$, but undercovers when $\tau=0.8. $ Overall, the coverage rate is closer to the nominal rate for Theorem 5 than for Theorem 6, because of this partial compensation of the two biases.

%Generally, the empirical coverage level obtained using the bootstrap is undersized, especially for parameter $\alpha_n$. This reflects the fact that in finite-sample, the bootstrap sample distribution is centered around $(\hat{\alpha}_n, \hat{\mu}_n)$, which itself has a negative bias. On the other hand, the coverage level obtained using the normal approximation in Theorem~\ref{plugin} is closer to the nominal level of 90\%. This is explained by the fact that although the OLS is negatively biased in finite-sample, the estimation of $k_n$ by $\hat{k}_n=\frac{1}{1-\hat{\alpha}_n}$ is also negatively biased. These two biases partially cancel out in the resulting confidence interval. Note that in Table~\ref{fig:coverage.summary}, we have assumed $\sigma^2$ to be fixed to 1. If we instead estimate $\sigma^2$, the coverage we obtain is almost unchanged.  

\subsection{Testing for mild stationarity}
\label{sec:test}
The confidence interval built in Section~\ref{sec:coverage} can be used as a preliminary visualization tool for the econometrician to choose between local-to-unity (including exact unit root) and mild stationarity, by simply checking whether 1 belongs to the confidence interval. Nevertheless, because mild stationarity does not contain the exact unit root as a special case, this visual diagnosis should not be regarded as a proper unit root test. Instead, under mild stationarity, one can test:
\[
\left\{
\begin{array}{l}
    H_0: \alpha = \alpha_0, \\
    H_1: \alpha \neq \alpha_0
\end{array}
\right.
\]
where \(\alpha_0\) is strictly smaller than 1 due to the mildly stationary Assumption \ref{assumptionmildly}.  %The significance level is set at 10\%.

%The test is based on the randomly weighted bootstrap procedure described in Section~\ref{sec:bootstrap}. Given a sample \(\{X_t\}_{t=1}^n\), let \(\hat{\alpha}_n\) denote the OLS estimator, and let \(\{\hat{\alpha}_n^{b}\}_{b=1}^B\) denote the corresponding bootstrap estimators obtained using i.i.d. weights \(\{\delta_t^b\} \sim \mathrm{Exp}(1)\). We estimate the standard deviation of \(\hat{\alpha}_n\) using the bootstrap estimator:\[\hat{\sigma}_n= \left\{ \frac{1}{B} \sum_{b=1}^B (\hat{\alpha}_n^{b} - \hat{\alpha}_n)^2 \right\}^{1/2},\]
%which corresponds to the standard deviation of the bootstrap distribution of \(\hat{\alpha}_n^{b} - \hat{\alpha}_n\).

For each candidate value \(\alpha_0\), we define the test statistic
\[
t_n(\alpha_0) = \frac{\hat{\alpha}_n - \alpha_0}{{SD(\hat{\alpha}_n)}},
\]
where the denominator ${SD(\hat{\alpha}_n)}$ is an estimator of the standard deviation of $\hat{\alpha}_n$ and can be computed using two different approaches: the plug-in method of Theorem 5, or the bootstrap procedure of Theorem 6. 
%is the standard deviation of $\hat{\alpha}_n^{b}, b$ varying. Following \cite{peng2024note}, we approximate the distribution of \(t_n(\alpha)\) by the standard normal distribution (justified by the bootstrap result in Theorem~\ref{bootstrap}). 
The decision rule at the 10\% significance level is to reject the null hypothesis if and only if
\[
2\bigl(1 - \Phi(|t_n(\alpha_0)|)\bigr) < 0.10,
\] 
where \(\Phi\) denotes the standard normal CDF.

We note that the probability of rejection is a deterministic function of the true value $\alpha_n$ of the DGP, as well as the choice $\alpha_0$. It is equal to 
%Equivalently, this procedure defines a confidence set\[\mathcal{C}_{0.10} = \{\alpha : p(\alpha) \geq 0.10\},\]which can be interpreted as an approximate 90\% confidence interval for \(\alpha\).
%Rejection of the null hypothesis indicates that the persistence parameter differs from \(\alpha_0\), providing evidence toward either stronger mean reversion (that is, \(\alpha < \alpha_0\)) or weaker mean reversion / explosive behavior (that is, \(\alpha > \alpha_0\)). In particular, values of \(\alpha\) close to one correspond to local-to-unity behavior, while values exceeding one indicate mildly explosive dynamics.
the size of the test (when $\alpha_0=\alpha_n$), and to the power of the test (when $\alpha_n \neq \alpha_0)$. To compute these probabilities, we carry out the following simulation experiment.  
%\subsection{Size and power}
%The estimation of size and power follows a modified version of the procedure proposed in \cite{peng2024note}. Let:
%\begin{itemize}
 %   \item We choose one of the three sample sizes \(n_j \in \{75, 200,2000\}\), \(j=1,2,3\);
  %  \item \(\alpha_0 \in \{0.95,0.99\}\) denotes the values of the autoregressive parameter under the null hypothesis;
  %  \item \(\alpha^{(i)} \in \{0.80 + 0.004i : i = 0,\dots,50\}\) denotes the true values of \(\alpha\) used in the data generating process (DGP);
  %  \item \(\hat{\alpha}_{n_j,r}^{(i)}\) denotes the OLS estimator computed from the \(r\)th simulated series generated under \(\alpha^{(i)}\);
   % \item \(N = 400\) denotes the number of Monte Carlo replications;
    %\item \(r = 1,\dots,N\) indexes the simulation replications;
    %\item \(B = 400\) denotes the number of bootstrap replications.
%\end{itemize}
We vary the sample size $n \in \{75, 200, 2000\}$ and the true autoregressive parameter $\alpha$ over an equally spaced grid of 51 points from $0.80$ to $1.00$ (step $0.004$). For each $(n, \alpha)$ cell we generate $N = 400$ independent trajectories, compute the OLS estimator on each, and apply the plug-in and the weighted bootstrap with $B = 5000$ replications to test $H_0\colon \alpha = \alpha_0$ at the two values $\alpha_0 \in \{0.95, 0.99\}$.
 
Figure 4 below reports the size, power, as well as size-corrected powers of the test.

 \begin{figure}[H]
    \centering
    \includegraphics[width=\textwidth]{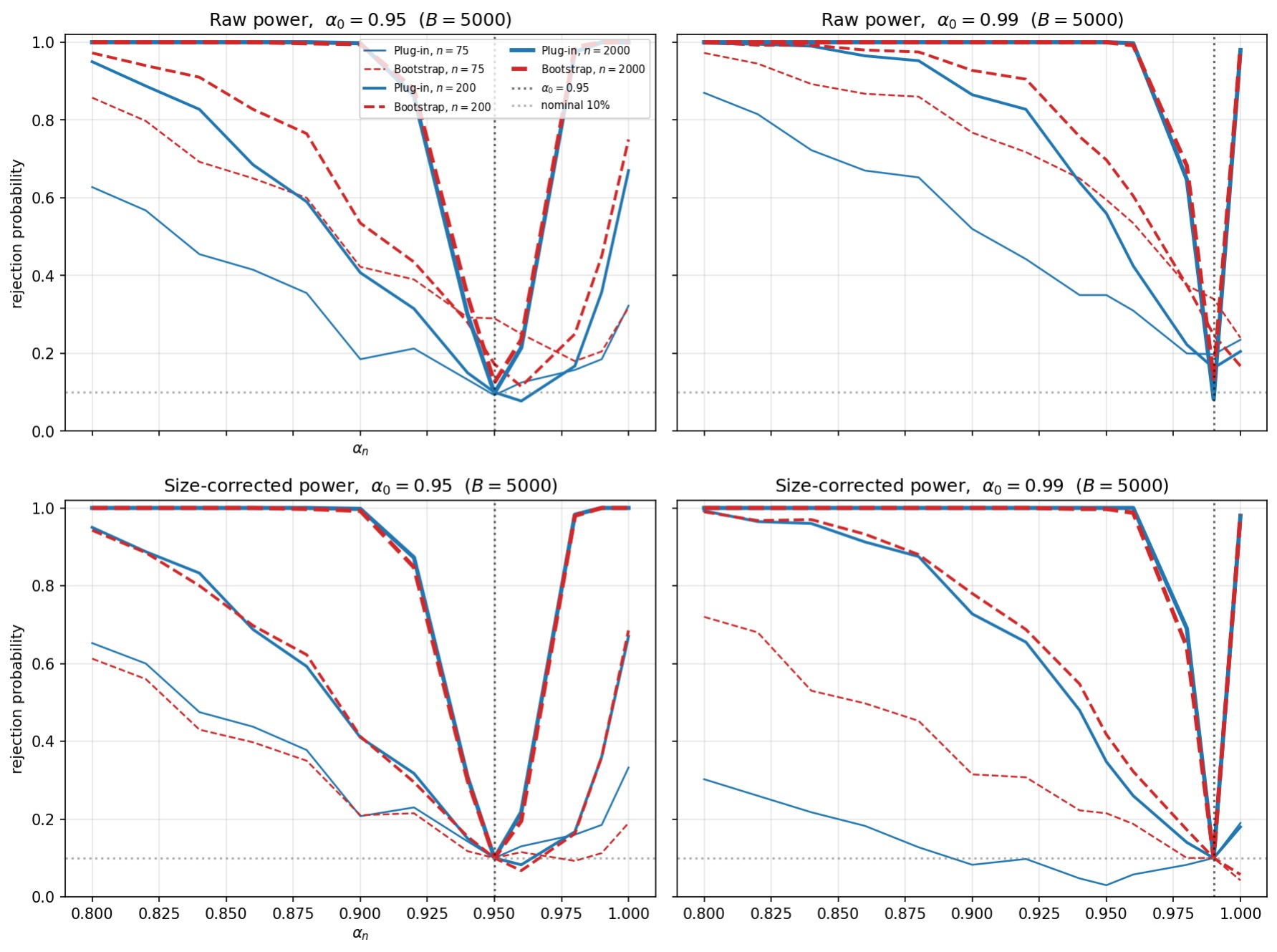}
    \caption{Raw (top row) and size-corrected (bottom row) power curves for
    the two-sided $10\%$ test $H_0: \alpha = \alpha_0$ versus
    $H_1: \alpha \neq \alpha_0$, using the plug-in procedure of
    Theorem~5 and the bootstrap-normal procedure. Left column:
    $\alpha_0 = 0.95$. Right column: $\alpha_0 = 0.99$. Raw rejection rates
    use the nominal critical value $z_{0.95} = 1.6449$; size-corrected
    rejection rates use empirical critical values calibrated to give exactly
    $10\%$ size under $H_0$. The vertical dotted line indicates $\alpha_0$;
    the horizontal dotted line indicates the nominal $10\%$ level. Based on
    $N = 400$ Monte Carlo replications and $B = 5000$ bootstrap replications
    per cell, with Poisson INARCH DGP ($\mu = 1$).}
    \label{fig:power_curves}
\end{figure}
%We first generate \(N = 400\) samples of size \(n_j\) from\[(X_t \mid \mathcal{F}_{t-1}) \sim \text{Poisson}(\mu + \alpha_0 X_{t-1}), \qquad t = 1,\dots,n_j.\]For each replication \(r\), we compute the bootstrap-based standard deviation estimator \(\hat{\sigma}_{n_j,r}\) using \(B = 400\) bootstrap samples, and form the test statistic\[t_{n_j,r}(\alpha_0) = \frac{\hat{\alpha}_{n_j,r} - \alpha_0}{\hat{\sigma}_{n_j,r}}.\]The empirical size is then estimated as \[\hat{\text{Size}} = \frac{1}{N} \sum_{r=1}^{N} \mathbbm{1}_{\left\{ \bigl|t_{n_j,r}(\alpha_0)\bigr| > z_{1-0.10/2} \right\}},\]where \(z_{1-0.10/2} = \Phi^{-1}(1 - 0.10/2)\).

From the top two panels on raw power, we see that the bootstrap test is uniformly more over-sized than the plug-in test. This is consistent with the findings of Table~4. Because of the size-distortion, we report in the lower panel the size-corrected power of the test. %At $n=2000$ and $n=200$, the test has quite good size-corrected power, regardless of the method (plug-in or bootstrap), whenever $\alpha_n$ is not too close to $\alpha_0$. 
At $n=2000$ the size-corrected power is essentially saturated ($\geq 0.99$) once $|\alpha_n - \alpha_0|\geq 0.04$; at $n=200$ it exceeds $0.65$ once $|\alpha_n - \alpha_0|\geq 0.07$, regardless of method (plug-in or bootstrap).
At $n=75$, and $\alpha_n=0.99$, however, the size-corrected power of the plug-in method almost collapses. This can be explained by the fact that in this case, $k_n=\frac{1}{1-\alpha_n}=100$ is even larger than $n=75$. In other words, for ultra-small sample size and $\alpha_n$ very close to 1, the mildly stationary asymptotic approximation may be less accurate than the local-to-unity assumption. Indeed, in this case, we expect $\hat{k}_n$  to have a large bias, which makes the plug-in method ineffective. The bootstrap method, on the other hand, retains some power since it is not affected by the bias of $\hat{k}_n$. 
Finally, at moderate sample sizes ($n \in \{75, 200\}$) and for alternatives close to $\alpha_0$, power against alternatives below $\alpha_0$ exceeds power against alternatives the same distance above, reflecting the negative finite sample bias of $\hat\alpha_n$ documented in Section~\ref{sec:simulations62}. At $n = 2000$ this asymmetry can reverse for intermediate distances, because the standard error of $\hat\alpha_n$ shrinks as $\alpha$ approaches unity, an effect that dominates the diminishing bias. The practical implication is unchanged: non-rejection for values of $\alpha_0$ close to one at moderate sample sizes should not be interpreted as evidence for mild stationarity. It mainly reflects limited power to distinguish mildly stationary, local-to-unity, and nearby mildly explosive regimes in the unit-root neighborhood.
%Finally, power against alternatives below $\alpha_0$ exceeds power against alternatives the same distance above $\alpha_0$ for both procedures, reflecting the negative finite sample bias of $\hat\alpha_n$ documented in Section~\ref{sec:simulations62}. Hence, non-rejection for values of $\alpha_0$ close to one should not be interpreted as evidence for mild stationarity. It mainly reflects limited power to distinguish mildly stationary, local-to-unity, and nearby mildly explosive regimes in the unit-root neighborhood.%For instance, at $(n, \alpha_0) = (200, 0.95)$, The plug-in test has size-corrected power $0.280$ against $\alpha_{\mathrm{DGP}} = 0.92$ (three points below the null) versus $0.155$ against $\alpha_{\mathrm{DGP}} = 0.98$ (three points above). This asymmetry is intrinsic to the testing problem rather than to either method, and explains why the largest size-corrected powers in Table~\ref{tab:power_sc} occur at alternatives below the null.
\section{Applications}
\label{sec:7empirical}
%\section{Applications}
%\label{sec:applications-revised}

This section applies the inference developed above to three datasets relevant for insurance and finance.  The first two are count-valued: annual wind-related event counts in Arizona and quarterly U.S. business bankruptcy filings.  The third is the monthly effective federal funds rate, expressed in basis points. This series is positive-valued, but with a significant portion of values near zero due to the ZLB. An additional benchmark comparison with the INAR(1) crime-count analysis
of \citet{peng2024note} is deferred to Appendix~B.%As we have explained in Section 6, the aim is not to conduct a formal unit-root test, but to check whether the data are compatible with  the mildly stationary assumption.

\subsection{Data}
\label{sec:applications-data}

The Arizona data are taken from the Spatial Hazard Events and Losses Database for the United States (SHELDUS)\footnote{\url{https://sheldus.asu.edu}}, which is a popular dataset in the economic/insurance literature, see e.g. \cite{barrot2016input, bourdeau2020natural}. We use the annual number of wind-related records in Arizona from 1960 to 2022. The series is nonnegative and integer-valued, with \(n=63\) observations. Modeling natural disaster counts using affine models has a long tradition in the count time series literature, see e.g., \cite{cui2016conditional,pei2025forecasting}.

The bankruptcy data are quarterly business bankruptcy filings compiled by the American Bankruptcy Institute from U.S. Courts statistics.\footnote{The source
table is ABI, ``Quarterly U.S. Business Filings (1980-present),''
\url{https://abi-org.s3.amazonaws.com/Newsroom/Bankruptcy_Statistics/QUARTERLY-BUSINESS-1980-PRESENT.pdf}.} Because the Bankruptcy Abuse Prevention and Consumer Protection Act (BAPCPA) took effect on October 17, 2005, generating a filing rush followed by a sharp
level shift, we use the pre-BAPCPA window from 1980Q1 to 2005Q3 as the main bankruptcy sample.  This gives \(n=103\) quarterly observations. Recently, \cite{agosto2016modeling} apply an INARCH-type model to similar corporate default count data.

The interest-rate application uses the monthly effective federal funds rate
from FRED\footnote{\url{https://fred.stlouisfed.org/series/FEDFUNDS}},
converted from percentage points to basis points.  %The sample runs from July 1954 to April 2026 and contains \(n=862\) observations. 
This series is positive valued, with spells near the zero lower bound. Previously, \cite{monfort2016staying} apply the affine ARG0 model to a similar dataset (Japanese government bond yield). %We follow \cite{monfort2016staying} and use data in 

Figure 5 plots the paths of these three series. 
\begin{figure}[H]
    \centering
    \IfFileExists{section7_timeseries_panel.png}{%
        \includegraphics[width=0.95\textwidth]{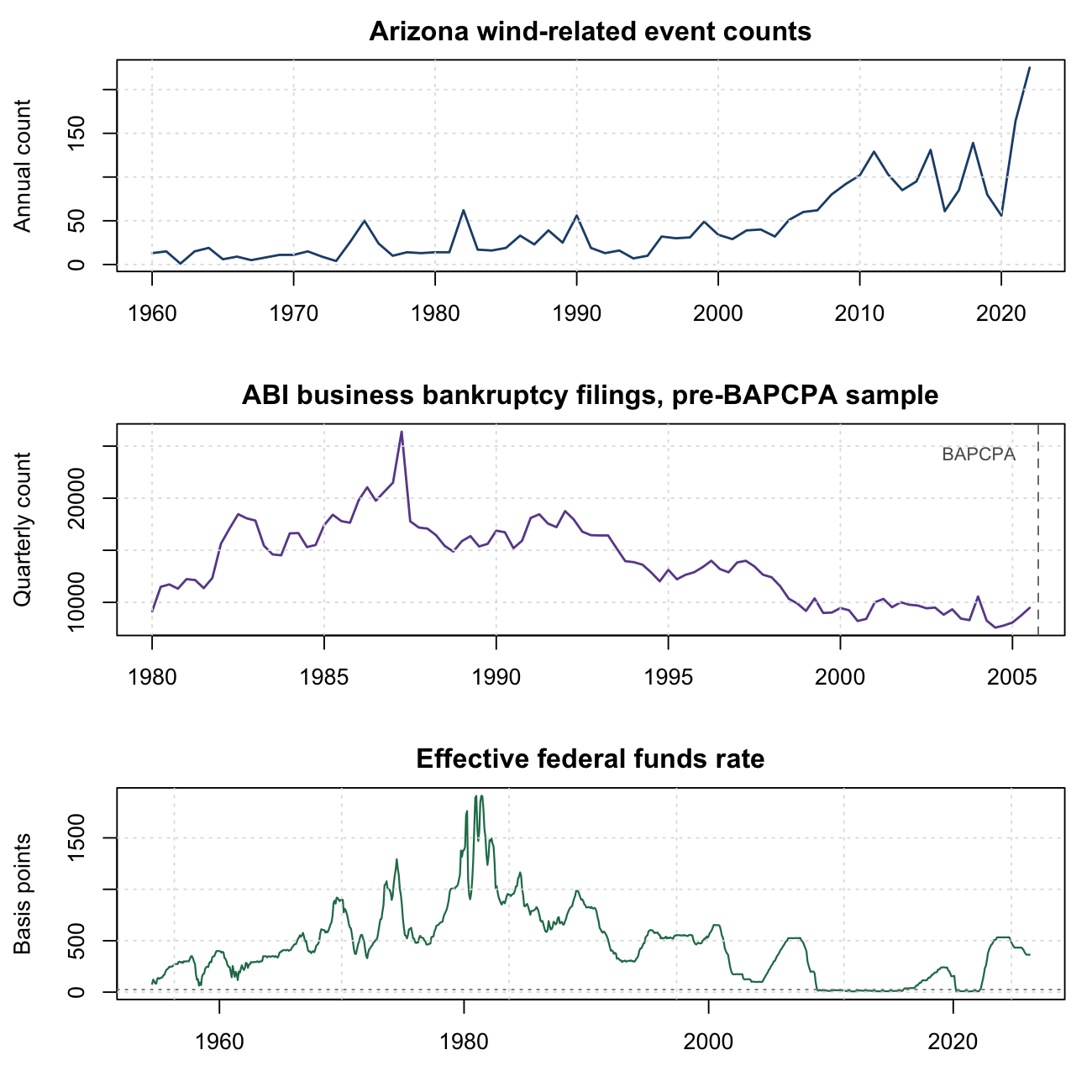}%
    }{%
        \fbox{\parbox{0.90\textwidth}{\centering
        Placeholder for the three-panel time-series plot: Arizona wind counts,
        pre-BAPCPA ABI business bankruptcy filings, and monthly FEDFUNDS in
        basis points.}}%
    }
    \caption{Path of the three time series.}
    \label{fig:section7-data-panel}
\end{figure}

\subsection{Main Estimates and Persistence Scale}
\label{sec:applications-estimates}
For this section, we use the specification:
\[
    \alpha_n = 1-\frac{1}{k_n}, \qquad \text{ where } k_n=n^\tau, \qquad 0<\tau<1.
\]
For each dataset, we estimate the model parameters $\alpha_n, \mu_n$ and $\sigma^2$, and also compute 
$\hat k_n= \frac{1}{1-\hat\alpha},  
    \frac{\hat k_n}{n},  
    \hat\tau = \frac{\ln \hat k}{\ln n}.
$ 
%In order for a dataset to be consistent with the mildly stationary assumption, we expect that for this dataset, $\frac{\hat k_n}{n}$ and $\hat\tau$ are smaller than 1. 
Table~\ref{tab:section7-main-estimates} reports these estimates.  %The reported \(n\) is the number of observations in each sample.
For FEDFUNDS, \(\hat\mu\) and \(\hat\sigma^2\) are reported after converting the rate to basis points.

\begin{table}[H]
\centering
\caption{Main estimates and persistence scales}
\label{tab:section7-main-estimates}
\scriptsize
\resizebox{\textwidth}{!}{%
\begin{tabular}{@{}l l r r r r r r l@{}}
\toprule
Series & Type of data/variable& \(n\) & \(\hat\alpha_n\) & \(\hat\mu_n\) & \(\hat\sigma^2\)
& \(\hat k_n\) & \(\hat k_n/n\) & \(\hat\tau\) \\
\midrule
Arizona wind counts
& natural disaster/count
& 63 & 0.941 & 5.9 & 16.6 & 16.9 & 0.27 & 0.685 \\
ABI bankruptcies, pre-BAPCPA
& default/count
& 103 & 0.930 & 973 & 138.3 & 14.3 & 0.14 & 0.575 \\
FEDFUNDS, monthly
& interest rate/positive 
& 862 & 0.990 & 4.9 & 5.0 & 101.7 & 0.12 & 0.684 \\
\bottomrule
\end{tabular}%
}
\end{table}

All three series have \(\hat \alpha_n\) close to one, suggesting high persistence.  At the same time, \(\hat k/n\) remains below
one in each case.  This distinction is especially important for FEDFUNDS:
although \(\hat\alpha_n=0.990\) is much closer to one than the two count-series
estimates, the corresponding scale \(\hat k=101.7\) is still small relative to
the full monthly sample size $n=862$.  Thus the full-sample FEDFUNDS estimate is
near-unit in the economically relevant sense, but it remains compatible with
the mild stationarity assumption that  \(k_n=o(n)\). We should also keep in mind that although $\hat{k}_n/n$ is smaller than 1 for all three datasets, its value is fairly large, especially for the Arizona data (0.27). Moreover, the implied value of $\hat{\tau}$ is also close to the value of 0.8 used in the simulations of section 6. These datasets therefore fall in the regime where, the normal approximation could be only moderately accurate.

Note that the estimate of \(\hat \alpha_n\) (and $\hat{k}_n$)  can be sensitive to the choice of the window. For instance, \cite{monfort2016staying} focused on the period 1995-2014 for their Zero Lower Bound application since the ultra low interest rate period started in Japan in the mid-1990s. If we follow this approach for FEDFUNDS, and use the period 2005-2026, then we would get an estimate $\hat{k}_n$ of 440, which is even larger than the sample size $n=255$; if instead we use the period 2007-2026, then we would get $\hat{k}_n=102$ against $n=231$. This substantial variation of $\hat{k}_n$ is consistent with the fact that between 2005 and 2007 (pre-subprime crisis period), interest rates in the US increased quickly, leading to a local explosiveness feature of the FED effective rates during this period. This echoes our discussions in section 4.3, that under mild stationarity, a process can feature local explosion. For this reason, for each dataset, we used the maximal window at our disposal (except for ABI bankruptcy count data, for which we removed the post-reform period since the reform led to a regime switch).

Finally, a direct diagnostic for the affine variance specification is the slope $\widehat{a}$ of $\log \widehat{W}_t^2$ on $\log X_{t-1}$, which estimates the exponent in $\mathbb{V}\mathrm{ar}(X_t \mid X_{t-1}) \propto X_{t-1}^{a}$ for large values of $X_{t-1}$. Under the affine specification, the theoretical value is $a=1$. Empirically, we obtain $\widehat{a} = 1.15 \pm 0.51$ (at 95\% confidence interval) for the Arizona wind counts ($n = 62$, $R^2 = 0.25$), $\widehat{a} = 1.02 \pm 1.46$ for the ABI pre-BAPCPA bankruptcies ($n = 102$, $R^2 = 0.02$), and $\widehat{a} = 0.60 \pm 0.13$ for the FEDFUNDS monthly series 1954--2026 ($n = 861$, $R^2 = 0.09$). %\footnote{The low $R^2$ values reflect the structural noise of the log-squared-residual regression: $\log \widehat{W}_t^2$ has an irreducible $\log \chi^2$-type component of variance $\pi^2/2 \approx 4.93$, so even under correctly specified affine variance the achievable $R^2$ is bounded above by roughly 0.15--0.25 for series of modest log-dispersion; this is the same phenomenon underlying low $R^2$ in White (1980) and Engle (1982) heteroskedasticity-test regressions, where power comes from $n$ rather than from $R^2$. The informative statistics are $\widehat{a}$ and its confidence interval.} 
The two count-valued series are consistent with the affine value $a = 1$. The FEDFUNDS estimate is sublinear, but is significantly different from 0 (corresponding to a linear AR(1) model). It might be worthwhile to investigate other non-affine conditional variance specifications of the form $\mathbb{V}ar[X_t|X_{t-1}]=\beta X^a_{t-1}+\delta$, where the exponent $a$ is between 0 and 1 and is an extra parameter. %Such an extension, however, is clearly out of the scope of this paper. As a consequence, the affine specification is still the most appropriate near unit root type theory at our disposal for this data.
For this data, the affine specification remains a useful benchmark, although the FEDFUNDS diagnostic suggests that sublinear variance specifications may be worth studying in future work.

%sub-window estimates on either side of the BVolcker / ZLB transitions individually return $\widehat{a}$ near one (e.g., $\widehat{a}=1.18 \pm 0.34$ on 1954--2007 and $\widehat{a}=1.00 \pm 0.16$ on 1988--2026), so the full-sample value reflects regime mixing across structurally different rate environments rather than a global violation of the affine specification. %We report the full-window value here, consistent with the whole-sample inferential framing of Section~\ref{sec:spurious-explosion}.

\subsection{Inference Within the Mildly Stationary Family}
\label{sec:applications-pvalue}

  %Both functions should be read as compatibility maps within the mildly stationary family.  They report which candidate values of \(\alpha\) are consistent with the data under the corresponding approximation.  They should not be interpreted as formal tests of the exact unit-root null \(\alpha=1\), since the exact boundary is outside the mildly stationary asymptotic regime.
Table 6 below reports the confidence interval for $\alpha_n$, obtained either using the plug-in method or the bootstrap method (with $B=50,000$).
\begin{table}[H]
\centering
\caption{Plug-in and bootstrap-normal inference for \(\alpha_n\)}
\label{tab:section7-plugin-bootstrap}
\scriptsize
\begin{tabular}{@{}l r c r c@{}}
\toprule
Series
& Plug-in SE
& Plug-in 10\% region
& Bootstrap SE
& Bootstrap 10\% region \\
\midrule
Arizona wind counts
& 0.0855 & \([0.800,1.000]\)
& 0.1292 & \([0.728,1.000]\) \\
ABI bankruptcies, pre-BAPCPA
& 0.0396 & \([0.865,0.995]\)
& 0.0687 & \([0.817,1.000]\) \\
FEDFUNDS, monthly
& 0.0068 & \([0.979,1.000]\)
& 0.0120 & \([0.970,1.000]\) \\
\bottomrule
\end{tabular}
\end{table}

The plug-in regions are narrower than the bootstrap-normal regions in all
three applications because the bootstrap-normal standard errors are larger.
For example, in the pre-BAPCPA bankruptcy sample, the plug-in region is
\([0.865,0.995]\), whereas the bootstrap-normal region reaches the upper end
of the plotted grid.  These regions should be interpreted only as
compatibility regions within the mildly stationary family: they indicate which
candidate values of \(\alpha\) are not rejected under that approximation.  They
do not constitute a formal unit-root test, and the non-rejection of values very
close to one should not be read as evidence that the exact unit-root boundary
has been ruled out.

Figure \ref{fig:section7-pvalue-panel} below reports the p-value functions of the test. 
\begin{figure}[H]
    \centering  \IfFileExists{section7_pvalue_panel.png}{%
        \includegraphics[width=0.98\textwidth]{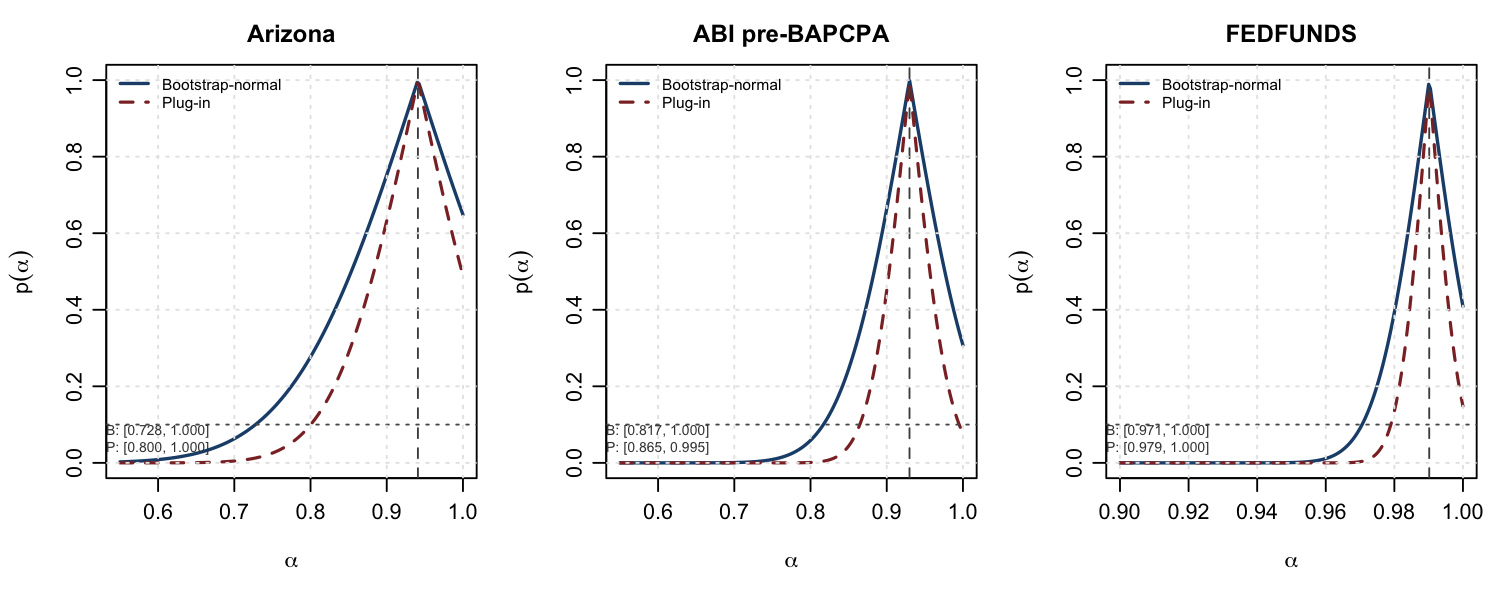}%
    }{%
        \fbox{\parbox{0.90\textwidth}{\centering
        Placeholder for the three-panel bootstrap-normal \(p(\alpha)\) plot.
        Each panel should show \(p(\alpha)\), a horizontal 10\% line, a vertical
        line at \(\hat\alpha\), and the 10\% non-rejection region.}}%
    }
    \caption{Plug-in and bootstrap-normal \(p(\alpha)\) functions for the three empirical applications.}
    \label{fig:section7-pvalue-panel}
\end{figure}

In these p-value plots, any value of $\alpha$ whose p-value is larger than 10\% is compatible with the mild stationarity assumption. For instance, for the Arizona data, we cannot reject any $\alpha$ between 0.73 and 1 using the bootstrap method, or any $\alpha$ between 0.8 and 1 using the plug-in method. When a non-rejection region reaches the upper grid boundary, this should be
read as weak separation between candidate values of \(\alpha_0<1\) close to one
and the exact unit-root boundary, not as evidence in favor of the exact
unit-root model.

Note that it is not automatically the case that when the same procedure is
applied to any data, values near 1 will have a \(p\)-value that is larger than
10 percent.  In Appendix B, we also propose a benchmark
comparison using the break-and-enter dwelling crime count data used by
\citet{peng2024note}.  For two of the three cities considered there, the
upper boundary \(\alpha=1\) is rejected under both the plug-in and
bootstrap-normal calibrations, and it is only borderline for the remaining
city.  This comparison shows that the non-rejection of values close to one in
our main applications reflects the persistence and uncertainty structure of
those samples, rather than a mechanical consequence of the \(p(\alpha)\)
construction.

\section{Conclusion}
\label{sec:conclusion}
This paper has established the near-unit-root theory for a class of Markov affine processes with exploding conditional variance, encompassing count, positive, or nonnegative processes. We have considered both the local-to-unity and the mildly integrated frameworks. The mildly stationary framework with autoregressive coefficient $\alpha_n=1-\frac{1}{k_n}$ is of particular interest, as it $(i)$ solves the nuisance parameter problem; $(ii)$ leads to more plausible stochastic properties of the process than the mildly integrated AR(1) model with intercept; $(iii)$ admits feasible inference, with a consistent and jointly asymptotically normal OLS estimator, a plug-in procedure that does not require knowledge of $k_n$, and an asymptotically valid randomly weighted bootstrap; and $(iv)$ can generate bubble-like episodes without locally explosive
least-squares estimates.

Several directions remain open. First, the WLS estimator studied in
Proposition~\ref{transient} requires the condition $2\mu > \sigma^2$; its properties when this condition fails are
unknown, even in the exact unit-root case \citep{wei1990estimation}. Second,  simulations have revealed that the finite-sample bias in $\hat{\alpha}_n$ can materially affect inference through either the plug-in or the bootstrap methods. This suggests that bias correction for \(\hat\alpha_n\) may be a useful direction for future work. Third, our inference is carried out within the mildly stationary family; developing a formal test that discriminates between the mildly stationary and local-to-unity regimes is a much harder problem, especially given the nuisance parameter problem of the local-to-unity case. This is left for future research. 

%Moreover, it admits a valid randomly weighted bootstrap, which is convenient for simple statistical inference. %, and We have shown that the local-to-unity model suffers from many similar issues of the local-to-unity AR(1) model, but the mildly integrated model not only inherits many nice properties of the local-to-unity AR(1) model, but also 

 \bibliographystyle{apalike}
 \bibliography{lib}
%this explosiveness of the variance leads to drastically different properties than those previously found for the linear AR(1) model, and many of which could be more plausible for real applications involving positive or count processes. 

%Recently, \cite{liu2023asymptotic, peng2024note} replace, for an INAR(1) process, the local-to-unity condition $a_n=1-\frac{\gamma}{n}$ by \begin{equation}\label{mildly}a_n=1-\frac{\gamma}{k_n},\end{equation}where $a_n$ is the thinning parameter of the INAR(1) process and $k_n$ is a sequence that increases to infinity more slowly than $n$: $k_n \rightarrow \infty,  \frac{k_n}{_n} \rightarrow 0$. For instance, if $k_n=n^{\tau}$ where $\tau$ is close to zero, then the estimator of $\mu$ in their mildly integrated INAR(1) process has a convergence rate of $\sqrt{n^{1-\tau}}$, which is close to $\sqrt{n}$ when $\tau$ is close to zero. In other words, the convergence rate seems to be ``continuous" with the standard parametric rate $\sqrt{n}$.  %In this section, we explore whether in the mildly integrated case eq.\eqref{mildly}, the CLS estimator of $\alpha$ and $\mu$ could both converge. 
\appendix

\section{Appendix}
\subsection{Proof of Theorem \ref{scaling}}
\label{proofscaling}
%This theorem extends Theorem~1 of \cite{barreto-souza2023}, which is established for the Poisson INARCH model. As noted in Remark~3 of \cite{barreto-souza2023}, their proof can be extended beyond the Poisson setting under suitable conditions. 

Let
\[
Y_n(s)=\frac{X_{\lfloor ns\rfloor}}{n}.
\]
We verify the local characteristics of the Markov chain \(Y_n\). Conditional
on \(X_{t-1}=x\), put
\[
\Delta_{n,t}:=\frac{X_t-X_{t-1}}{n}.
\]
Then
\[
n\mathbb{E}[\Delta_{n,t}\mid X_{t-1}=x]
=
(\alpha_n-1)x+\mu_n
=
\gamma_n\frac{x}{n}+\mu_n,
\]
which converges locally uniformly, for \(x/n\) in compact sets, to
\[
\gamma y+\mu,\qquad y=x/n.
\]
Moreover,
\[
\begin{aligned}
n\mathbb{E}[\Delta_{n,t}^2\mid X_{t-1}=x]
&=
\frac1n\mathbb{E}[(X_t-X_{t-1})^2\mid X_{t-1}=x]\\
&=
\frac{\beta_nx+\delta_n}{n}
+
\frac{\{(\alpha_n-1)x+\mu_n\}^2}{n},
\end{aligned}
\]
which converges locally uniformly to \(\sigma^2 y\).

It remains to verify the local Lindeberg condition. Let \(p>2\) be the
exponent in Assumption~\ref{AssAffineMoment}. By the affine representation and Rosenthal's inequality,
\[
\mathbb{E}\!\left[
|W_t|^p\mid X_{t-1}=x
\right]
\le C(1+x^{p/2})
\]
uniformly in \(n\), where
\[
W_t=X_t-\mathbb{E}[X_t\mid X_{t-1}].
\]
Since
\[
X_t-X_{t-1}
=
W_t+(\alpha_n-1)x+\mu_n,
\]
and since \((\alpha_n-1)x+\mu_n\) is uniformly bounded on
\(\{x/n\le K\}\), it follows that
\[
\mathbb{E}\!\left[
|X_t-X_{t-1}|^p\mid X_{t-1}=x
\right]
\le C_K n^{p/2}.
\]
Therefore, for every \(\varepsilon>0\),
\[
\begin{aligned}
n\mathbb{E}\!\left[
\Delta_{n,t}^2
\mathbf 1_{\{|\Delta_{n,t}|>\varepsilon\}}
\mid X_{t-1}=x
\right]
&\le
n\varepsilon^{-(p-2)}
\mathbb{E}\!\left[
|\Delta_{n,t}|^p
\mid X_{t-1}=x
\right]  \\
&\le
C_K\varepsilon^{-(p-2)} n^{1-p/2}
\longrightarrow 0,
\end{aligned}
\]
locally uniformly for \(x/n\le K\).

The limiting local characteristics are therefore
\[
b(y)=\mu+\gamma y,\qquad a(y)=\sigma^2y.
\]
The martingale problem associated with
\[
\mathcal Af(y)
=
(\mu+\gamma y)f'(y)
+\frac12\sigma^2y f''(y)
\]
is well posed, and its solution is the CIR diffusion
\[
d\Upsilon_s=(\mu+\gamma\Upsilon_s)\,ds
+\sigma\sqrt{\Upsilon_s}\,dB_s,
\qquad \Upsilon_0=0.
\]
By the standard diffusion approximation theorem for Markov chains
\citep[Theorem~7.4.1]{ethier1986markov},
\[
\left(\frac{X_{\lfloor ns\rfloor}}{n}\right)_{s\ge0}
\Rightarrow
(\Upsilon_s)_{s\ge0}.
\]
This proves Theorem~\ref{scaling}.

%Below, we only provide an informal proof. By the affine property, we have:$$ \mathbb{E}[Y_{s+\frac{1}{n}}\vert Y_s]=\frac{1}{n}\mathbb{E}[X_{\lfloor n(s +\frac{1}{n})\rfloor} \vert X_{\lfloor ns\rfloor}]=\frac{1}{n} (\alpha_n  X_{\lfloor ns\rfloor}+\mu_n)=\alpha_n Y_s+\mu_n$$ or$$\mathbb{E}[Y_{s+\frac{1}{n}}-Y_s\vert Y_n]=\frac{1}{n }(\gamma Y_s+\mu_n)=\frac{1}{n }(\gamma Y_s+\mu)+o(1/n).$$Similarly, the corresponding conditional variance is:$$\mathbb{V}[Y_{s+\frac{1}{n}}-Y_s\vert Y_n]=\frac{1}{n} (\beta_n Y_s+ \frac{\delta_n}{n})= \frac{1}{n} \sigma^2 Y_s+o(1/n).$$From these conditional mean and variance, we can conclude that the limiting process of $(Y_s)$ is the diffusion \eqref{sde}. 

\subsection{Proof of Theorem \ref{general}}
\label{proofgeneral}
If \(x\) is integer-valued, the desired weak convergence becomes CLT. It therefore suffices to establish the result for non-integer values of \(x\), which arise only when the state space of \((X_t)\) is continuous. In this case, the LT \(e^{-a}\) is infinitely divisible.

Write \(x = \lfloor x \rfloor + (x - \lfloor x \rfloor)\). By the affine property, conditional on \(X_{t-1} = x\), the random variable \(X_t\) admits the decomposition
\[
X_t \overset{d}{=} \sum_{i=1}^{\lfloor x \rfloor} Y_i + \epsilon + V,
\]
where $(Y_i)$, $i=1,\dots$ are i.i.d. random variables with LT $e^{-a}$, $\epsilon$ has LT $e^{-b(u)}$ and $V$ has LT $e^{-(x- \lfloor x \rfloor) a}$, with $Y_i$, $\epsilon$, $V$ mutually independent. Then it suffices to remark that for fixed values of $x-\lfloor x \rfloor$, as $\lfloor x \rfloor$ increases to infinity, both $\epsilon$ and $V$ are negligible, while the first term $\sum_{i=1}^{ \lfloor x \rfloor } Y_{i}$ is approximately Gaussian, by the CLT, after appropriate centering and normalization. Theorem \ref{general} follows.

\subsection{Proof of Theorem~\ref{thm1}}
\label{proofthm1}
%The proof of Part $a)$ is similar to \cite{wei1989some}, and is omitted.
%In what follows, we will only prove the results for the INARCH model \eqref{inarch}, where $\sigma^2=1$. The proof for the general case is similar.
Let us %now prove part $b)$. We 
introduce the rescaled terms:
\begin{align*}
&\tilde{F}_n:= \begin{bmatrix}
n^{-3/2} & 0 \\
0 & n^{-1/2}
\end{bmatrix} F_n
  \begin{bmatrix}
n^{-3/2} & 0 \\
0 & n^{-1/2}
\end{bmatrix} = \begin{bmatrix}
        n^{-3} \sum_{t=1}^n X_{t-1}^2& n^{-2} \sum_{t=1}^n X_{t-1}\\
        n^{-2} \sum_{t=1}^n X_{t-1}& 1
    \end{bmatrix},
\\
&\tilde{d}_n:=\begin{bmatrix}
n^{-2} & 0 \\
0 & n^{-1}
\end{bmatrix} d_n = \begin{bmatrix}
        n^{-2} \sum_{t=1}^n X_{t-1}W_t \\
        n^{-1} \sum_{t=1}^n W_t
    \end{bmatrix}.
\end{align*}
Then we use the following Lemma:
\begin{lemma}
\label{lem:firstlemma}
We have the joint weak convergence:
\begin{itemize}
    \item $a)$ of the two processes: the martingale process $(M_s)$ defined by $M_s=\sum_{t=1}^{[ns]} W_t/n$ and $(Y_s)$:
    $$ 
    \begin{bmatrix}  
    (X_{\lfloor n s \rfloor }/n)  \\
        ( \sum_{t=1}^{[ns]} W_t/n)
    \end{bmatrix}
    \overset{w}{%
\longrightarrow } 
\begin{bmatrix}
(\Upsilon_s) \\
(\sigma \int_{0}^{s}\sqrt{\Upsilon_{r}}dB_r)
\end{bmatrix}
$$
\item $b)$ of the following random variables:
\begin{equation}
\label{jointweakconvergence}
\begin{bmatrix}
    n^{-3} \sum_{t=1}^n X_{t-1}^2 \\
     n^{-2} \sum_{t=1}^n X_{t-1} \\
      n^{-2} \sum_{t=1}^n X_{t-1}W_t \\
        n^{-1} \sum_{t=1}^n W_t
\end{bmatrix} \overset{w}{\rightarrow}
\begin{bmatrix}
    \int_0^1 \Upsilon^2_s \mathrm{d}s    \\
    \int_0^1 \Upsilon_s \mathrm{d}s \\
    \int_0^1 \sigma \Upsilon_s^{3/2} \mathrm{d}B_s\\
     \int_0^1 \sigma \Upsilon_s^{1/2} \mathrm{d}B_s
\end{bmatrix}.
\end{equation}
\end{itemize}

\end{lemma}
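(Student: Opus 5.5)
For part $a)$, I will prove joint convergence of the pair $(Y_n, M_n)$, with $Y_n(s)=X_{\lfloor ns\rfloor}/n$ and $M_n(s)=\sum_{t\le \lfloor ns\rfloor}W_t/n$, by treating it as a two-dimensional semimartingale. The starting point is the identity
\[
X_t=X_0+\sum_{j=1}^t\bigl[(\alpha_n-1)X_{j-1}+\mu_n\bigr]+\sum_{j=1}^t W_j.
\]
Rescaled, it reads
\[
Y_n(s)=\frac{X_0}{n}+\int_0^{\lfloor ns\rfloor/n}\bigl(\gamma_n Y_n(r)+\mu_n\bigr)\,dr+M_n(s)+o(1),
\]
so $Y_n$ is an explicit continuous functional of $M_n$ up to negligible terms.

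The martingale $M_n$ has predictable quadratic variation
\[
\langle M_n\rangle_s=n^{-2}\sum_{t\le \lfloor ns\rfloor}\bigl(\beta_nX_{t-1}+\delta_n\bigr)=\sigma^2\int_0^s Y_n(r)\,dr+o_p(1).
\]
The conditional Lindeberg condition follows from the bound $\mathbb{E}[|W_t|^p\mid X_{t-1}=x]\le C(1+x^{p/2})$ already used in the proof of Theorem~\ref{scaling}, after localizing on the event $\{\sup_{s\le T}Y_n(s)\le K\}$. The probability of this event tends to one uniformly in $n$ by tightness of $Y_n$ from Theorem~\ref{scaling}.

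Tightness of $(Y_n,M_n)$ in $D[0,T]^2$ then follows from Theorem~\ref{scaling} together with the Aldous/Rebolledo criterion applied to $M_n$, using that $\langle M_n\rangle$ is C-tight. Let $(\Upsilon,M)$ be any subsequential limit. By the martingale FCLT (e.g. \citep[Theorem~7.1.4]{ethier1986markov}), $M$ is a continuous martingale with $\langle M\rangle_s=\sigma^2\int_0^s\Upsilon_r\,dr$, and passing to the limit in the identity gives
\[
\Upsilon_s=\int_0^s(\gamma\Upsilon_r+\mu)\,dr+M_s.
\]
Define $B_s=\int_0^s\bigl(\sigma\sqrt{\Upsilon_r}\bigr)^{-1}\mathbf 1_{\{\Upsilon_r>0\}}\,dM_r$, adding an independent Brownian motion on the zero set if necessary. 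By Lévy's characterization $B$ is a Brownian motion, and $M_s=\sigma\int_0^s\sqrt{\Upsilon_r}\,dB_r$. Pathwise uniqueness of the CIR SDE (Yamada--Watanabe) identifies the law of $(\Upsilon,M)$ uniquely, so the whole sequence converges. This is part $a)$.

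For part $b)$, the first two components are handled by writing
\[
n^{-3}\sum_{t=1}^n X_{t-1}^2=\int_0^1Y_n(r)^2\,dr,\qquad n^{-2}\sum_{t=1}^n X_{t-1}=\int_0^1Y_n(r)\,dr,
\]
each up to an $O(1/n)$ endpoint error. The fourth component is exactly $M_n(1)$. All three are continuous functionals on $D[0,1]$ at continuous limit paths, so they follow from part $a)$ by the continuous mapping theorem, jointly.

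The third component $n^{-2}\sum_{t=1}^n X_{t-1}W_t=\int_0^1Y_n(r-)\,dM_n(r)$ is a stochastic integral, and this is the main obstacle, because stochastic integration is not a continuous functional on $D$. I will invoke the Kurtz--Protter (or Jakubowski--Mémin--Pagès) theorem on weak convergence of stochastic integrals. This requires $M_n$ to satisfy the uniform controlled variations (UT/UCV) condition, which holds because
\[
\sup_n\mathbb{E}\langle M_n\rangle_1=\sup_n\sigma^2\,\mathbb{E}\!\int_0^1Y_n(r)\,dr+o(1)<\infty,
\]
a bound that follows from iterating \eqref{eq1} for the mean. The theorem then gives joint convergence of $\bigl(Y_n,M_n,\int Y_n\,dM_n\bigr)$ to $\bigl(\Upsilon,M,\int\Upsilon\,dM\bigr)$, and $\int_0^1\Upsilon_s\,dM_s=\sigma\int_0^1\Upsilon_s^{3/2}\,dB_s$. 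Joint convergence of all four components in \eqref{jointweakconvergence} follows because every component is a functional of this single jointly converging triple.
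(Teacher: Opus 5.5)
Your argument is correct. For part $b)$ it follows the paper's route: continuous mapping for the first, second and fourth components, and a weak-convergence theorem for stochastic integrals for the third. You also supply what the paper only defers to a reference. Kurtz--Protter/Jakubowski--M\'emin--Pag\`es applies because $M_n$ is a square-integrable martingale with $\sup_n\mathbb{E}\langle M_n\rangle_1<\infty$, and this bound follows from $\mathbb{E}X_t=\alpha_n^tX_0+\mu_n\sum_{j<t}\alpha_n^j=O(n)$ for $t\le n$.

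For part $a)$ you take a much longer road than the paper. The paper uses the same identity you write down, but in the opposite direction:
\[
M_n(s)=Y_n(s)-X_0/n-\gamma_n n^{-2}\sum_{t\le\lfloor ns\rfloor}X_{t-1}-\mu_n\lfloor ns\rfloor/n
\]
is a continuous functional of $Y_n$ alone. Joint convergence of $(Y_n,M_n)$ then follows from Theorem~\ref{scaling} by the continuous mapping theorem. The limit $\Upsilon_s-\int_0^s(\gamma\Upsilon_r+\mu)\,dr$ is identified as $\sigma\int_0^s\sqrt{\Upsilon_r}\,dB_r$ simply by reading off \eqref{sde}.

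Several of your steps re-derive what Theorem~\ref{scaling} already gives: the Rebolledo tightness argument, the localized Lindeberg check, the martingale identification of the limit, the L\'evy construction of $B$, and Yamada--Watanabe uniqueness. Tightness of $M_n$ and the law of $(\Upsilon,M)$ are inherited from $Y_n$ through that map. The only thing your route adds is an explicit construction of a driving Brownian motion from $M$, which would matter only if the law of $\Upsilon$ were not already known.

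If you keep that step, note that \citep[Theorem~7.1.4]{ethier1986markov} requires a deterministic limiting quadratic variation. It therefore does not apply when $\langle M\rangle_s=\sigma^2\int_0^s\Upsilon_r\,dr$ is random, and you would need a result for martingale limits with random characteristics. The simpler fix is to drop the step altogether.
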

\begin{proof}[Proof of Lemma~\ref{lem:firstlemma}]
Convergence $a)$ is simply due to the fact that:
\begin{equation}
\frac{1}{n} \sum_{t=1}^{\lfloor n s \rfloor } W_t=\frac{1}{n}\sum_{t=1}^{\lfloor n s \rfloor } \left( X_t - \mu_n - \alpha_n X_{t-1} \right) = \frac{X_{\lfloor n s \rfloor }}{n}-\frac{\gamma}{n^2} \sum_{t=1}^{\lfloor n s \rfloor } X_{t-1}- \frac{\lfloor n s \rfloor}{n} \mu_n,
\end{equation}
which, by the continuous mapping theorem (CMT), converges to $\Upsilon_s- \gamma \int_0^s \Upsilon_r \mathrm{d}r-s \mu=\sigma \int_{0}^{s}\sqrt{\Upsilon_{r}}dB_r$.

The first, second, and fourth convergences in $b)$ are simple consequences of the CMT. The third convergence says that when the two processes in Lemma~\ref{lem:firstlemma} $a)$ converge jointly, their stochastic integral also converges to the corresponding limiting stochastic integral. This kind of result requires some tightness argument. We refer to the proof of Theorem~1 in \cite{wls} for a similar proof.
\end{proof}

Let us now prove Theorem \ref{thm1}. By CMT, the joint distribution of $(\tilde{F}_n, \tilde{d}_n)$ converges to
$$
\left(F_{\infty},d_{\infty}):=\Big( \begin{bmatrix}
 \int_0^1 \Upsilon^2_s \mathrm{d}s    &   \int_0^1 \Upsilon_s \mathrm{d}s \\
  \int_0^1 \Upsilon_s \mathrm{d}s & 1
\end{bmatrix}, 
\begin{bmatrix}
     \int_0^1 \sigma \Upsilon_s^{3/2} \mathrm{d}B_s \\
    \int_0^1 \sigma \Upsilon_s^{1/2} \mathrm{d}B_s 
\end{bmatrix}
\right)
$$

 Then we have:
\begin{align*}
    F_n^{-1}d_n&= \begin{bmatrix}
n^{-3/2} & 0 \\
0 & n^{-1/2}
\end{bmatrix} \tilde{F}_n^{-1}  \begin{bmatrix}
n^{-3/2} & 0 \\
0 & n^{-1/2}
\end{bmatrix}
 \begin{bmatrix}
n^{2} & 0 \\
0 & n
\end{bmatrix} 
\tilde{d}_n 
= \begin{bmatrix}
n^{-1} & 0 \\
0 & 1
\end{bmatrix} \tilde{F}_n^{-1} \tilde{d}_n
\end{align*}

Hence, the distribution of $$ \begin{bmatrix}
n  & 0 \\
0 & 1
\end{bmatrix} F_n^{-1}d_n=\begin{bmatrix}
n  & 0 \\
0 & 1
\end{bmatrix} \begin{bmatrix} \hat{\alpha}-\alpha_n \\
\hat{\mu}_n- \mu_n
 \end{bmatrix}=\begin{bmatrix}n (\hat{\alpha}-\alpha_n)\\
\hat{\mu}_n- \mu_n
 \end{bmatrix}$$ converges to the distribution of $F_{\infty}^{-1}d_{\infty}$. Theorem~\ref{thm1} follows. %In other words, we get the weak convergence in eq.\eqref{cmt}.
\subsection{Proof of Part $a)$ of Theorem \ref{cls}}
By the normal equations, we have:
\label{proofclsa}
\begin{equation}
\label{cls_eq_2}
(\hat{\alpha}_n, \hat{\mu}_n)':= {\underbrace{\begin{bmatrix}
\sum_{t=1}^n X_{t-1}^2 & \sum_{t=1}^n X_{t-1} \\
\sum_{t=1}^n X_{t-1} & n
\end{bmatrix}} _{:=F_n}}^{-1} 
{\begin{bmatrix}
\sum_{t=1}^n X_{t-1}X_t \\
\sum_{t=1}^n X_{t}
\end{bmatrix}.}
\end{equation}
Thus, after centering around their true values, we obtain the following:
\begin{equation}
\label{errorterm}
\begin{bmatrix} \hat{\alpha}_n  \\
\hat{\mu}_n 
 \end{bmatrix}-\begin{bmatrix} \alpha_n \\
 \mu_n
 \end{bmatrix}
 =F_n^{-1} \Big( 
 \begin{bmatrix}
\sum_{t=1}^n X_{t-1}X_t \\
\sum_{t=1}^n X_{t}
\end{bmatrix}
-F_n \begin{bmatrix}
\alpha_n\\
 \mu_n
 \end{bmatrix}  \Big)=F_n^{-1} 
\underbrace{\begin{bmatrix}
\sum_{t=1}^n X_{t-1}W_{t} \\
\sum_{t=1}^n W_{t}
\end{bmatrix}}_{:=d_n},
\end{equation}
where $$W_t= X_{t}-\alpha_n X_{t-1}- \mu_n $$ forms a martingale difference sequence (MDS). 

%Part $b)$ of Theorem \ref{cls} is based on 
Let us first prove the following two Lemmas, which are analogues of Lemma~\ref{lem:firstlemma}. 
\begin{lemma}[Mildly stationary LLN]
\label{lem:ms-lln}
Assume Assumptions~\ref{ass1}, \ref{Assbeta}, \ref{assumptionmildly}, and \ref{AssAffineMomentBis}, and suppose $\gamma<0$.
After the normalization $\gamma=-1$, let $Y_{n,t}=X_t/k_n$. Then
\[
\frac1n\sum_{t=1}^n Y_{n,t-1}^j
\overset{p}{\longrightarrow}
\mathbb E[\Upsilon_\infty^j],
\qquad j=1,2,3.
\]
Moreover, for every $\eta>0$,
\[
\lim_{R\to\infty}\limsup_{n\to\infty}
P\left(
\frac1n\sum_{t=1}^n
Y_{n,t-1}^3\mathbf 1_{\{Y_{n,t-1}>R\}}
>\eta
\right)=0.
\]
\end{lemma}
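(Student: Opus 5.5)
The plan is a block/ergodic argument built on moment recursions for the affine chain. Two ingredients are needed. The first is uniform-in-time moment bounds for $Y_{n,t}=X_t/k_n$ up to order $p>3$. The second is an $L^2$ law of large numbers obtained by computing the means and controlling the covariances of $Y_{n,t}^j$, rather than passing through the functional limit on an unbounded horizon.

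\emph{Step 1 (uniform moments).} From \eqref{eq1}--\eqref{eq2} with $\alpha_n=1-1/k_n$, I will take conditional expectations of $X_t^j$ for $j=1,2,3$. Affinity makes $\mathbb E[X_t^j\mid X_{t-1}]$ a polynomial of degree $j$ in $X_{t-1}$, with leading coefficient $\alpha_n^j=1-j/k_n+O(k_n^{-2})$. The lower-order coefficients are $O(1)$: they are cumulants of $Y_n,\eta_n$, bounded by Assumption~\ref{AssAffineMomentBis}. Dividing by $k_n^j$ gives
\[
m_{j,t}:=\mathbb E[Y_{n,t}^j]=\Bigl(1-\tfrac{j}{k_n}\Bigr)m_{j,t-1}+\tfrac1{k_n}\bigl(c_{j}\,m_{j-1,t-1}\bigr)+o(k_n^{-1})(1+m_{j-1,t-1}),
\]
where $c_j = j\mu + j(j-1)\sigma^2/2$, i.e.\ the generator of the CIR diffusion with $\gamma=-1$ applied to $y^j$. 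By induction on $j$, $\sup_{t\le n}m_{j,t}<\infty$. Moreover $m_{j,t}\to \mathbb E[\Upsilon_\infty^j]$ uniformly for $t\ge Ck_n$, because the recursion contracts at rate $e^{-jt/k_n}$ and its fixed points solve $j\,m_j=c_j m_{j-1}$. These fixed points are exactly the gamma moments stated in Theorem~\ref{cls}. The same recursion, pushed to a non-integer order $q\in(3,p]$ via Rosenthal's inequality as in the proof of Theorem~\ref{scaling}, gives $\sup_{n,t}\mathbb E[Y_{n,t}^q]<\infty$. Since $X_0=o(k_n)$, the first $Ck_n$ terms contribute only $O(k_n/n)\to0$ to the averages.

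\emph{Step 2 (LLN for $j=1,2$).} I will write $\frac1n\sum_t Y_{n,t-1}^j-\frac1n\sum_t m_{j,t-1}$ and bound its variance. Iterating the affine conditional expectation gives, for $s>t$,
\[
\mathbb E[Y_{n,s}^j\mid\mathcal F_t]-m_{j,s}=O\bigl(e^{-(s-t)/k_n}\bigr)\sum_{i\le j}\bigl(Y_{n,t}^i-m_{i,t}\bigr).
\]
This holds because the map on polynomials of degree $\le j$ is triangular with eigenvalues $\alpha_n^i$. It follows that $|\mathrm{Cov}(Y_{n,t}^j,Y_{n,s}^j)|\le C e^{-(s-t)/k_n}$, with $C$ controlled by $\sup\mathbb E[Y^{2j}]$. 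Summing over $t,s$ gives a variance of order $k_n/n\to0$, and Chebyshev's inequality concludes.

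\emph{Step 3 ($j=3$ and the tail statement).} This is the main obstacle, because $\mathbb E[Y^6]$ need not be finite when $p$ is only $>3$. I would truncate. Write $Y^3=Y^3\wedge R^3+(Y^3-R^3)_+$. The bounded part $g_R(Y)=Y^3\wedge R^3$ can be handled by replacing the polynomial covariance argument with a mixing-type bound: over a gap of $Lk_n$ steps, the law of $Y_{n,t+Lk_n}$ given $Y_{n,t}$ approaches the gamma law uniformly on compact sets of initial values. This follows from Proposition~\ref{scalinglimit} applied to the restarted chain, together with the exponential ergodicity of the CIR process in Proposition~\ref{stationaritycir}. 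Averaging over $n/(Lk_n)$ nearly independent blocks then gives $\frac1n\sum g_R(Y_{n,t-1})\to\mathbb E[g_R(\Upsilon_\infty)]$. For the remainder, Markov's inequality with $\mathbb E[Y^3\mathbf 1_{Y>R}]\le R^{3-q}\sup\mathbb E[Y^q]$ yields the tail statement uniformly in $n$. Letting $R\to\infty$ then gives the $j=3$ LLN. If the mixing step proves delicate, the fallback is to run Step 2's covariance argument on $Y^3$ directly under an extra $p>6$ moment condition. I would first try to avoid that by noting that only $\mathrm{Cov}(g_R(Y_t),g_R(Y_s))$ is needed, and that bounded functions of the Markov chain decorrelate at rate $e^{-c(s-t)/k_n}$.
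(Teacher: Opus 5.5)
Your Step~2 fails for $j=2$ under the stated hypotheses. Chebyshev applied to $\frac1n\sum_t Y_{n,t-1}^2$ requires $\operatorname{Var}(Y_{n,t}^2)<\infty$, i.e.\ $\sup_{n,t}\mathbb E[Y_{n,t}^4]<\infty$; as you note yourself, the constant is controlled by $\sup\mathbb E[Y^{2j}]$. Assumption~\ref{AssAffineMomentBis} only gives moments of some order $p>3$. If $3<p<4$, take for instance an immigration variable $\eta_n$ with tail of order $y^{-7/2}$. Then $X_t\ge$ (a copy of) $\eta_n$, so $\mathbb E[X_t^4]=\infty$, and the variance you want to bound is infinite. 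The obstacle you flag at $j=3$ therefore already appears at $j=2$.

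The fix is already in your proposal. Use the Step~3 truncation for $j=2$ as well, and keep the polynomial covariance argument only for $j=1$, where $\sup\mathbb E[Y^2]<\infty$ suffices. This is what the paper does: it truncates $y^j\wedge R$ for all $j=1,2,3$ before computing any variance. It uses the order-$p$ moment only to control the remainder via Markov's inequality.

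With that repair your route is valid, but it differs from the paper's. The paper works with the invariant law $\pi_n$ of the chain for fixed $n$:
\begin{itemize}
\item it bounds $\sup_n\int y^p\,\pi_n(dy)$ with the same order-$p$ recursion you use;
\item it identifies every weak limit of $\pi_n$ as the gamma law, because Proposition~\ref{scalinglimit} started from $\pi_n$ produces a stationary CIR diffusion;
\item it gets the first three moments by uniform integrability;
\item it obtains concentration from a geometric decorrelation bound $C_h\alpha_n^\ell$ for bounded Lipschitz $h$, treating $X_0=o(k_n)$ as an $O(k_n/n)$ transient.
\end{itemize}

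You instead stay on the actual non-stationary path. You identify the limits algebraically as the fixed points $j\,m_j=c_jm_{j-1}$ of the marginal-moment recursion. This avoids both the invariant-law detour and the separate transient argument, and it recovers the gamma moments quoted in Theorem~\ref{cls} directly. It also means the mixing step only needs to deliver $\frac1n\sum_t\{g_R(Y_{n,t-1})-\mathbb E g_R(Y_{n,t-1})\}\to0$. The means $\frac1n\sum_t m_{j,t-1}$, combined with your uniform $q$-moment tail bound, already pin down the limit.

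For that concentration step, your diffusion-limit-plus-CIR-ergodicity argument gives only an $\varepsilon(L)$ bound on covariances at lags $\ge Lk_n$. That is enough, since pairs at shorter lags contribute $O(Lk_n/n)$. If you want the paper's geometric rate, couple two copies started at $x\le x'$ monotonically. The difference is then an affine chain without immigration, so $\mathbb E|X_\ell-X'_\ell|=\alpha_n^\ell|x-x'|$, which gives the $C_h\alpha_n^\ell$ covariance bound for bounded Lipschitz $h$.
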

This lemma says that in the long-run, the mildly stationary process, once scaled by $k_n$, is ``almost ergodic". Let us, for instance, take $j=1$. Then:
\begin{align}
   {(nk_n)^{-1 }} \sum_{t=1}^n X_t  &= (nk_n)^{-1 }\int_{1}^{n}X_{\lfloor t \rfloor} \mathrm{d} t  \nonumber \\
   &= \frac{k_n}{n}\int_{1/k_n}^{n/k_n}\frac{X_{ \lfloor k_n r \rfloor}}{k_n} \mathrm{d} r \overset{w}{\rightarrow }  \frac{1}{n/k_n}\int_{0}^{{n/k_n}}\Upsilon_{r}\mathrm{d} r \qquad \text{for fixed $n/k_n$, and $n \rightarrow \infty$} \nonumber \\
   & \rightarrow \mathbb{E}[\Upsilon_{\infty}] \qquad \text{as $n/k_n\rightarrow \infty$}
\end{align}
where in the last step, we have used the ergodic theorem for the stationary process $(\Upsilon_s)$. Here, in the last step, the argument of letting $n$ and $n/k_n$ go to infinity separately is heuristic and has also been used in the unit root literature by \cite{phillips2010smoothing} [see their equations (6)-(7)]. Since this type of double asymptotic technique will be used throughout the paper, Lemma~\ref{lem:ms-lln} formalizes this argument. 
\begin{proof}[Proof of Lemma~\ref{lem:ms-lln}]
Let $\pi_n$ be the invariant distribution of $Y_{n,t}$. Since
$\alpha_n=1-1/k_n<1$, this invariant distribution is unique. By
Assumption~\ref{AssAffineMomentBis}, Rosenthal's inequality and the affine
moment recursion imply that, for the exponent $p>3$ in
Assumption~\ref{AssAffineMomentBis},
\[
\sup_n\int y^p\,\pi_n(dy)<\infty .
\tag{A.1}
\]
Indeed, for $X_t$ in stationarity, one has
\[
\mathbb E[X_t^p\mid X_{t-1}=x]
\le
\alpha_n^p x^p+C(1+x^{p-1}),
\]
uniformly in $n$. Dividing by $k_n^p$, integrating with respect to
$\pi_n$, and using $1-\alpha_n^p\asymp k_n^{-1}$ gives (A.1).

By tightness, take a subsequence such that $\pi_n\Rightarrow\pi$. Applying
Proposition~\ref{scalinglimit} with initial law $\pi_n$ shows that the
CIR diffusion with initial law $\pi$ is stationary. Hence $\pi$ is the
unique invariant law of the limiting CIR diffusion, namely
$\mathcal(\Upsilon_\infty)$. Therefore
\[
\pi_n\Rightarrow \mathcal(\Upsilon_\infty).
\tag{A.2}
\]
Together, (A.1) and (A.2) imply convergence of moments up to order three:
\[
\int y^j\,\pi_n(dy)\to \mathbb E[\Upsilon_\infty^j],
\qquad j=1,2,3.
\tag{A.3}
\]

For any bounded Lipschitz function $h$, the subcritical affine chain mixes
at rate $\alpha_n^\ell$. Thus
\[
\left|
\operatorname{Cov}_{\pi_n}\big(h(Y_{n,0}),h(Y_{n,\ell})\big)
\right|
\le C_h\alpha_n^\ell .
\]
Consequently, under stationarity,
\[
\operatorname{Var}\left(\frac1n\sum_{t=1}^n h(Y_{n,t-1})\right)
\le
\frac{C_h}{n^2}\sum_{\ell=0}^{n-1}(n-\ell)\alpha_n^\ell
\le C_h\frac{k_n}{n}\to0.
\]
The initial condition contributes only a transient term of order
$k_n/n=o(1)$. Hence
\[
\frac1n\sum_{t=1}^n h(Y_{n,t-1})
-
\int h(y)\pi_n(dy)
\xrightarrow{p}0.
\tag{A.4}
\]

Apply (A.4) to the bounded Lipschitz truncations
$h_{j,R}(y)=y^j\wedge R$, $j=1,2,3$, and then let $R\to\infty$. The
truncation error is negligible by (A.1). This proves
\[
\frac1n\sum_{t=1}^n Y_{n,t-1}^j
\xrightarrow{p}
\mathbb E[\Upsilon_\infty^j],
\qquad j=1,2,3.
\]
Finally, by Markov's inequality and (A.1),
\[
P\left(
\frac1n\sum_{t=1}^n
Y_{n,t-1}^3\mathbf 1_{\{Y_{n,t-1}>R\}}
>\eta
\right)
\le
\frac{C}{\eta}R^{3-p},
\]
which tends to zero as $R\to\infty$ because $p>3$.
\end{proof}

\begin{lemma}
\label{mildlystationary}
We have the following joint convergence of: 
\begin{itemize}
    \item $a)$ The processes: $$
      \begin{bmatrix}  
    (X_{\lfloor k_n s \rfloor }/k_n)  \\
        ( \sum_{k=1}^{[k_ns]} W_k/k_n)
    \end{bmatrix}
    \overset{w}{%
\rightarrow } 
\begin{bmatrix}
(\Upsilon_s) \\
(\sigma \int_{0}^{s}\sqrt{\Upsilon_{r}}dB_r)
\end{bmatrix}
    $$
        \item $b)$ the random vector
  $   \begin{bmatrix}
n^{-1/2}k_{n}^{-3/2}\sum_{t=1}^{n}X_{t-1}W_{t} \\
(nk_{n})^{-1/2}\sum_{t=1}^{n}W_{t}
\end{bmatrix}
\overset{w}{\longrightarrow } \mathcal{N}(0, \Sigma).$ 

    % \lim_{s\to \infty} \frac{1}{\sqrt{s}}\int_{0}^{{s}}\Upsilon_{r}d\mathcal{W}_r 
%\notag %\\
%& &\overset{w}{\rightarrow }U_2\notag  

\end{itemize}

\end{lemma}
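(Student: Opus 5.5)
Part $a)$ is Proposition~\ref{scalinglimit} combined with the same algebraic identity used in the proof of Lemma~\ref{lem:firstlemma}~$a)$. Summing the definition of $W_t$ over $t\le\lfloor k_ns\rfloor$ gives
\[
\frac{1}{k_n}\sum_{t=1}^{\lfloor k_n s\rfloor}W_t=\frac{X_{\lfloor k_ns\rfloor}-X_0}{k_n}+\frac{1}{k_n^2}\sum_{t=1}^{\lfloor k_n s\rfloor}X_{t-1}-\frac{\lfloor k_ns\rfloor}{k_n}\mu_n,
\]
using $\alpha_n-1=-1/k_n$. The right-hand side is a continuous functional of the rescaled path $(X_{\lfloor k_n r\rfloor}/k_n)_{r\le s}$ in the Skorokhod topology on compacts, since $X_0=o(k_n)$ and the Riemann sum converges to $\int_0^s\Upsilon_r\,\mathrm{d}r$. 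By the continuous mapping theorem, the pair therefore converges jointly to $(\Upsilon_s,\ \Upsilon_s+\int_0^s\Upsilon_r\,\mathrm{d}r-\mu s)$. By \eqref{sde} with $\gamma=-1$, the second component equals $\sigma\int_0^s\sqrt{\Upsilon_r}\,\mathrm{d}B_r$.

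For part $b)$ I would not pass through the diffusion at all. Instead I would apply a martingale central limit theorem for triangular arrays (e.g.\ Hall and Heyde, Cor.~3.1) to the vector martingale differences
\[
\xi_{n,t}:=(nk_n)^{-1/2}\bigl(Y_{n,t-1}W_t/k_n^{1/2},\ W_t/k_n^{1/2}\bigr)', \qquad Y_{n,t-1}=X_{t-1}/k_n .
\]
Each $\xi_{n,t}$ is a martingale difference with respect to $\mathcal F_{t}=\sigma(X_0,\dots,X_t)$. For a fixed direction $\lambda=(\lambda_1,\lambda_2)'$, the conditional variance sum is
\[
\sum_{t=1}^n \mathbb E[(\lambda'\xi_{n,t})^2\mid\mathcal F_{t-1}]=\frac1n\sum_{t=1}^n(\lambda_1Y_{n,t-1}+\lambda_2)^2\Bigl(\beta_nY_{n,t-1}+\frac{\delta_n}{k_n}\Bigr).
\]
Expanding this expression gives averages of $Y_{n,t-1}^j$ for $j=0,1,2,3$. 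By Lemma~\ref{lem:ms-lln} together with $\beta_n\to\sigma^2$ and $\delta_n/k_n\to0$, it converges in probability to $\sigma^2\bigl(\lambda_1^2\mathbb E\Upsilon_\infty^3+2\lambda_1\lambda_2\mathbb E\Upsilon_\infty^2+\lambda_2^2\mathbb E\Upsilon_\infty\bigr)=\lambda'\Sigma\lambda$. The Cramér--Wold device then yields the joint limit.

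The main obstacle is the conditional Lindeberg condition, because the weights $Y_{n,t-1}$ are unbounded and only third moments of the weights are controlled uniformly. I would split on the event $\{Y_{n,t-1}\le R\}$. On that event, the conditional $p$-th moment bound $\mathbb E[|W_t|^p\mid X_{t-1}=x]\le C(1+x^{p/2})$ obtained via Rosenthal's inequality (as in the proof of Theorem~\ref{scaling}) gives
\[
\mathbb E\bigl[|W_t/\sqrt{k_n}|^p\mid\mathcal F_{t-1}\bigr]\le C(1+R^{p/2}).
\]
Consequently the truncated Lindeberg sum is at most $C_R\,\varepsilon^{2-p}(nk_n)^{0}\,n^{1-p/2}\to0$, since each summand carries the factor $(nk_n)^{-p/2}k_n^{p/2}=n^{-p/2}$. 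On the complementary event $\{Y_{n,t-1}>R\}$, the Lindeberg sum is bounded by the full conditional variance contribution, which is of order $\frac1n\sum_t(1+Y_{n,t-1})^3\mathbf 1_{\{Y_{n,t-1}>R\}}$. This tail is negligible as $R\to\infty$, uniformly in $n$ in probability, by the second statement of Lemma~\ref{lem:ms-lln}. Letting $n\to\infty$ first and then $R\to\infty$ completes the Lindeberg verification and gives $\mathcal N(0,\Sigma)$.

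Joint convergence of parts $a)$ and $b)$ is not needed for the subsequent use in Theorem~\ref{cls}.
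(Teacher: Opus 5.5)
Your proof is correct and follows essentially the same route as the paper. Part $a)$ uses the summation identity for $\sum W_t$ with the continuous mapping theorem on top of Proposition~\ref{scalinglimit}. Part $b)$ uses the martingale CLT, with the conditional covariance handled by Lemma~\ref{lem:ms-lln} and the conditional Lindeberg condition checked by truncating on $\{Y_{n,t-1}\le R\}$ and invoking the tail statement of that lemma.

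One slip to fix: as displayed, $\xi_{n,t}=(nk_n)^{-1/2}\bigl(Y_{n,t-1}W_t/k_n^{1/2},\,W_t/k_n^{1/2}\bigr)'$ carries an extra factor $k_n^{-1/2}$, so its conditional variances would sum to $O_p(1/k_n)$. The prefactor should be $n^{-1/2}$. That is the normalization your variance formula and Lindeberg bound actually use, and it coincides with the paper's $\xi_{n,t}$.
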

%Here, $(i)$ and $(ii)$ %are laws of large number type results and roughly say that upon normalizing by $k_n$, the process $(\frac{X_t}{k_n}, t=1,...)$ and its squared version $(\frac{X^2_t}{k^2_n}, t=1,...)$ satisfy the ergodic property. They are to be compared with the first convergence in eq.\eqref{jointweakconvergence}. For local-to-unity processes (under which $\alpha_n=1-\frac{\gamma}{n}$ is ``too close to unity"), the law of large numbers (LLN) does not apply, and the sample mean of the process for $t=1,...,n$ is random. In the case of mildly stationary processes, $\alpha_n=1-\frac{\gamma}{k_n}$ is closer to stationarity, hence the sample mean is asymptotically deterministic. %$(iii)$ strengthens Proposition 2 on the scaling limit of $ (X_{\lfloor k_n s \rfloor }/k_n)$, by showing that this convergence is joint with the weak convergence of the martingale process $ (\sum_{k=1}^{[k_ns]} W_k/k_n)$. %$iv)$ is a consequence of $(iv)$. 

\begin{proof}[Proof of Lemma \ref{mildlystationary}] 
For convergence $a)$, we have:
\begin{equation}
\frac{1}{k_n} \sum_{t=1}^{[k_ns]} W_t=\frac{1}{k_n}\sum_{t=1}^{\lfloor k_ns \rfloor} \left( X_t - \mu_n - \alpha_n X_{t-1} \right) = \frac{X_{\lfloor k_ns \rfloor}}{k_n}-\frac{\gamma}{k^2_n} \sum_{t=1}^{\lfloor k_ns \rfloor} X_{t-1}- \mu_n \frac{\lfloor k_ns \rfloor}{k_n}, \label{dotW}
\end{equation}
%Dividing by $k_n$ on both sides, we get:\[\frac{  \sum_{t=1}^{[k_ns]} W_t}{k_n}= \frac{X_{\lfloor k_n s \rfloor}}{k_n} + \gamma \int_{0/k_n}^{(\lfloor k_n s \rfloor-1)/k_n}\frac{X_{ \lfloor k_n r \rfloor}}{k_n} \mathrm{d} r - \beta \frac{\lfloor k_n s \rfloor}{k_n}.\]where \begin{equation}\frac{\gamma}{k_n^2} \sum_{t=1}^{\lfloor k_ns \rfloor} X_{t-1}={\gamma}{k_n^{-2}} \int_{0}^{\lfloor k_ns \rfloor-1}X_{\lfloor t \rfloor} dt=\gamma \int_{0/k_n}^{(\lfloor k_n s \rfloor-1)/k_n}\frac{X_{ \lfloor k_n r \rfloor}}{k_n} \mathrm{d} r, \end{equation} using the method of change of variables, i.e., $t=k_n r $.
which, by CMT, converges weakly to:
\[
 \Upsilon_s - \gamma \int_0^s \Upsilon_r \mathrm{d} r - \mu s= \sigma\int_0^s \sqrt{\Upsilon_r}\mathrm{d}B_r.
\]

For part $b)$, define
\[
\xi_{n,t}:=
\begin{bmatrix}
n^{-1/2}k_n^{-3/2}X_{t-1}W_t\\
(nk_n)^{-1/2}W_t
\end{bmatrix}.
\]
Then $(\xi_{n,t},\mathcal F_t)$ is a martingale-difference array. Let us check the conditions to apply the martingale central
limit theorem (MCLT). Its
conditional covariance matrix equals
\[
\begin{bmatrix}
\frac1{nk_n^3}\sum X_{t-1}^2(\beta_nX_{t-1}+\delta_n)
&
\frac1{nk_n^2}\sum X_{t-1}(\beta_nX_{t-1}+\delta_n)
\\
\frac1{nk_n^2}\sum X_{t-1}(\beta_nX_{t-1}+\delta_n)
&
\frac1{nk_n}\sum(\beta_nX_{t-1}+\delta_n)
\end{bmatrix}.
\]
By Lemma~\ref{lem:ms-lln}, and since $\beta_n\to\sigma^2$, and $\delta_n=O(1)$,
this converges in probability to $\Sigma$.

It remains only to verify the conditional Lindeberg condition. Let
$p>3$ be the exponent in Assumption~\ref{AssAffineMomentBis}, and set
$\delta=p-2>1$. Assumption~\ref{AssAffineMomentBis} implies
\[
E[|W_t|^p\mid\mathcal F_{t-1}]
\le C(1+X_{t-1}^{p/2}).
\]
Fix $R>0$ and write $A_{n,t}(R)=\{Y_{n,t-1}\le R\}$. On $A_{n,t}(R)$,
$X_{t-1}\le Rk_n$, and hence
\[
\|\xi_{n,t}\|\le C_R\frac{|W_t|}{\sqrt{nk_n}}.
\]
Therefore,
\[
\sum_{t=1}^n
E\left[
\|\xi_{n,t}\|^p\mathbf 1_{A_{n,t}(R)}
\mid\mathcal F_{t-1}
\right]
\le C_R n^{1-p/2}=o(1).
\]
By Markov's inequality,
\[
\sum_{t=1}^n
E\left[
\|\xi_{n,t}\|^2
\mathbf 1_{\{\|\xi_{n,t}\|>\varepsilon\}}
\mathbf 1_{A_{n,t}(R)}
\mid\mathcal F_{t-1}
\right]
\le
\varepsilon^{-\delta}
\sum_{t=1}^n
E\left[
\|\xi_{n,t}\|^{2+\delta}
\mathbf 1_{A_{n,t}(R)}
\mid\mathcal F_{t-1}
\right]
=o(1).
\]
On $A_{n,t}(R)^c$, the Lindeberg term is bounded by the corresponding
conditional variance term. For $R\ge1$,
\[
\sum_{t=1}^n
E\left[
\|\xi_{n,t}\|^2\mathbf 1_{A_{n,t}(R)^c}
\mid\mathcal F_{t-1}
\right]
\le
C\frac1n\sum_{t=1}^n
Y_{n,t-1}^3\mathbf 1_{\{Y_{n,t-1}>R\}}+o_p(1),
\]
which is negligible as $R\to\infty$ by Lemma~\ref{lem:ms-lln}. Thus the
conditional Lindeberg condition holds. By the martingale central limit
theorem, part $b)$ follows.

\end{proof}

 Let us now prove Part $a)$ of Theorem~\ref{cls}.  
%We now have the ingredients necessary to prove part $b)$ of Theorem \ref{cls}.
We follow the proof of Theorem~\ref{thm1}, and define $\tilde{F_n}$ and $\tilde{d_n}$ by:
$$\tilde{F}_n:= \begin{bmatrix}
n^{-1/2} k_n^{-1} & 0 \\
0 & n^{-1/2}
\end{bmatrix} F_n \begin{bmatrix}
n^{-1/2} k_n^{-1} & 0 \\
0 & n^{-1/2}
\end{bmatrix}=\begin{bmatrix}
    (nk_n^2)^{-1} \sum_{t=1}^n X_{t-1}^2  & {(nk_n)^{-1 }} \sum_{t=1}^n X_{t-1} \\
    {(nk_n)^{-1 }} \sum_{t=1}^n X_{t-1} & 1
\end{bmatrix},  $$
and
$$\tilde{d}_n:=\begin{bmatrix}
n^{-1/2} k_n^{-3/2} & 0 \\
0 & n^{-1/2} k_n^{-1/2}
\end{bmatrix} d_n=\begin{bmatrix}
n^{-1/2}k_{n}^{-3/2}\sum_{t=1}^{n}X_{t-1}W_{t} \\
(nk_{n})^{-1/2}\sum_{t=1}^{n}W_{t}
\end{bmatrix}.
$$
Then by the previous two Lemmas, we get the joint convergence of:
$$
\tilde{F}_n \overset{w}{\rightarrow} \Omega, \qquad \tilde{d}_n \overset{w}{\rightarrow} \mathcal{N}(0, \Sigma),
$$
Thus:
\begin{align*}
    F_n^{-1}d_n&= \begin{bmatrix}
n^{-1/2} k^{-1}_n & 0 \\
0 & n^{-1/2}
\end{bmatrix} \tilde{F}_n^{-1} \begin{bmatrix}
n^{-1/2} k^{-1}_n & 0 \\
0 & n^{-1/2}
\end{bmatrix}
\begin{bmatrix}
n^{1/2} k_n^{3/2} & 0 \\
0 & n^{1/2} k_n^{1/2}
\end{bmatrix}
\tilde{d}_n \\
&= \begin{bmatrix}
(nk_n)^{-1/2} & 0 \\
0 & (k_n/n)^{1/2}
\end{bmatrix} \tilde{F}_n^{-1} \tilde{d}_n.
\end{align*}
Hence, the distribution of 
 $ 
\begin{bmatrix}
(nk_n)^{1/2} & 0 \\
0 & (n/k_n)^{1/2}
\end{bmatrix} \tilde{F}_n^{-1} \tilde{d}_n=
\begin{bmatrix}
    (nk_n)^{1/2} (\hat{\alpha}_n-\alpha_n) \\
    (n/k_n)^{1/2} (\hat{\mu}_n-\mu_n)
\end{bmatrix} 
$ converges weakly to $\mathcal{N}(0, \Omega^{-1}\Sigma \Omega^{-1})$ by CMT. Part $a)$ of Theorem~\ref{cls} follows. 

\subsection{Proof of Part $b)$ of Theorem~\ref{cls}}
\label{proofclsb}
We first prove the following lemma:
\begin{lemma}
\label{lemma3}
We have the joint weak convergence of the following terms:
\begin{enumerate}[$i)$]
\item $k_n^{-1} \alpha_n^{-n} X_{n}\overset{w}{\rightarrow}  Z$
\item $k_n^{-2}  \alpha_n^{-n}\sum_{t=1}^{n} X_{t-1} \overset{w}{\rightarrow} Z$,  
\item $2 k_n^{-3} \alpha_n^{-2n} \sum_{t=1}^{n} X^2_{t-1} \overset{w}{\rightarrow} Z^2$, 
\item $3 k_n^{-4} \alpha_n^{-3n}\sum_{t=1}^{n}X_{t-1}^3
\overset{w}{\rightarrow} Z^3$,
\item  $   
\displaystyle  \begin{bmatrix}
  \frac{1}{\sqrt{k_nX_n^3}} {\sum_{t=1}^n X_{t-1} W_t} \\
  \frac{1}{\sqrt{k_nX_n}}  {\sum_{t=1}^n W_t} 
  %e^{-\frac{n}{2k_n}} \sum_{t=1}^n W_t\\
%  \frac{1}{k^2_n}e^{-\frac{3n}{2k_n}} \sum_{t=1}^n X_t W_t 
  \end{bmatrix}\overset{w}{\longrightarrow}  \mathcal{N}\left(0,\sigma^2 \begin{bmatrix}
    1/3 & \frac{1}{2} \\
     \frac{1}{2} & 1
\end{bmatrix}\right)$, where random variable $Z$ is defined in Proposition~\ref{stationaritycir} and the Gaussian vector in $v)$ is independent of $Z$. 
\end{enumerate}

\end{lemma}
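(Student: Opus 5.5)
The plan is to reduce everything to the almost sure convergence of the normalized martingale $\alpha_n^{-t}X_t/k_n$, in the spirit of \cite{phillips2007limit} and \cite{fei2018limit}. We will use the martingale-type quantity $\tilde X_t:=\alpha_n^{-t}(X_t+\mu_n k_n)$. Since $\mathbb E[X_t+\mu_nk_n\mid X_{t-1}]=\alpha_n(X_{t-1}+\mu_nk_n)$ when $\gamma=1$, this is an exact martingale with increments $\alpha_n^{-t}W_t$. Its conditional variance is $\alpha_n^{-2t}(\beta_nX_{t-1}+\delta_n)$.

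For $i)$, write $k_n^{-1}\alpha_n^{-n}X_n=k_n^{-1}\tilde X_n-\mu_n\alpha_n^{-n}$. The second term tends to $0$ because $\alpha_n^{-n}\approx e^{-n/k_n}$. We then split $\tilde X_n=\tilde X_{\lfloor k_nS\rfloor}+(\tilde X_n-\tilde X_{\lfloor k_nS\rfloor})$ for a large fixed $S$. By Proposition~\ref{scalinglimit} and the continuous mapping theorem, $k_n^{-1}\tilde X_{\lfloor k_nS\rfloor}\Rightarrow e^{-S}(\Upsilon_S+\mu)$. This converges a.s. to $Z$ as $S\to\infty$ by Proposition~\ref{stationaritycir}$\,ii)$. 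The remainder has second moment
$$k_n^{-2}\sum_{t>k_nS}\alpha_n^{-2t}\mathbb E[\beta_nX_{t-1}+\delta_n].$$
Using $\mathbb E X_{t-1}\le Ck_n\alpha_n^{t}$, this is bounded by $C\sum_{t>k_nS}k_n^{-1}\alpha_n^{-t}\le Ce^{-S}$, uniformly in $n$. A standard ``$S\to\infty$ after $n\to\infty$'' argument (Billingsley, Thm 3.2) then gives $i)$.

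The same estimate, applied from $t$ onward, gives the uniform approximation $\max_{k_nS\le t\le n}|k_n^{-1}\alpha_n^{-t}X_t-k_n^{-1}\tilde X_n|\to0$ in probability. In other words, $X_{t}\approx k_n\alpha_n^{t}Z_n$ with $Z_n:=k_n^{-1}\alpha_n^{-n}X_n$, uniformly over late $t$. For $ii)$–$iv)$ we substitute this approximation:
$$\sum_t X_{t-1}^j\approx k_n^jZ_n^j\sum_t\alpha_n^{j(t-1)}\approx k_n^jZ_n^j\frac{\alpha_n^{jn}}{\alpha_n^j-1}\approx \frac{k_n^{j+1}}{j}\alpha_n^{jn}Z_n^j.$$
The early terms $t\le k_nS$ contribute $O_p(k_n^{j+1}e^{jS})$, which is negligible against $\alpha_n^{jn}$. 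This yields $ii)$–$iv)$, jointly with $i)$.

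For $v)$, we apply a martingale CLT with random normalization, in the stable (mixing) form as used for explosive AR by \cite{phillips2007limit}. Since $X_n$ is itself random, we first prove the result with the deterministic normalizations $k_n^{-3/2}\alpha_n^{-3n/2}$ and $k_n^{-1}\alpha_n^{-n/2}$, and then substitute $X_n=k_n\alpha_n^nZ_n$. By $ii)$–$iv)$, the conditional covariance of the vector $\bigl(k_n^{-2}\alpha_n^{-3n/2}\sum X_{t-1}W_t,\;k_n^{-1}\alpha_n^{-n/2}\sum W_t\bigr)$ converges to $\sigma^2Z\,\mathrm{diag\text{-}type}\begin{bmatrix}Z^2/3&Z/2\\Z/2&1\end{bmatrix}$. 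Dividing by the appropriate powers of $Z$ gives exactly $\sigma^2\begin{bmatrix}1/3&1/2\\1/2&1\end{bmatrix}$.

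The main obstacle is twofold. First, the limit variance is random, so we need stable convergence to obtain both a mixed normal and independence from $Z$. Second, the sums are dominated by the last $O(k_n)$ observations, which are not asymptotically independent of $Z$. The approach to try first is the ``decoupling'' argument of \cite{phillips2007limit}. We split the martingale sums at $n-m_n$ with $m_n/k_n\to\infty$ slowly, and note that $Z$ is essentially determined by $X_{\lfloor k_nS\rfloor}$, that is, by the first $O(k_n)$ observations. The late martingale increments are conditionally Gaussian given $\mathcal F_{k_nS}$, which yields asymptotic independence. We then check the conditional Lindeberg condition by using $\mathbb E[|W_t|^p\mid\mathcal F_{t-1}]\le C(1+X_{t-1}^{p/2})$, exactly as in Lemma~\ref{mildlystationary}$\,b)$, together with $\max_t\alpha_n^{-2t}X_{t-1}/\sum\to0$. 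Finally, Slutsky's lemma combines the deterministic-normalization result with $i)$ to pass to the random normalizations by $\sqrt{k_nX_n^3}$ and $\sqrt{k_nX_n}$.
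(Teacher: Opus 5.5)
Your proposal is correct. Parts $ii)$--$v)$ follow essentially the paper's logic, but your proof of $i)$ takes a genuinely different route.

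\textbf{Part $i)$.} The paper iterates the affine Laplace transform, $u_{n,m+1}=a_n(u_{n,m})$ from $u_{n,0}=\lambda/(k_n\alpha_n^n)$, and runs a discrete Riccati (logistic) computation that yields $(1+\sigma^2\lambda/2)^{-2\mu/\sigma^2}$ directly. That argument is self-contained and does not use Propositions~\ref{scalinglimit} or~\ref{stationaritycir}. However, it needs expansions of $a_n,b_n$ near zero that are uniform in $n$, and it only gives the marginal law, so the pathwise approximation for $ii)$--$iv)$ has to be built separately.

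You instead combine four ingredients through Billingsley's approximation theorem: the exact martingale $\alpha_n^{-t}(X_t+\mu_nk_n)$ (up to a constant, the paper's $k_nR_{n,t}$ minus its deterministic compensator), the scaling limit at a fixed horizon $S$, the almost sure convergence $e^{-S}\Upsilon_S\to Z$, and an $L^2$ tail bound of order $e^{-S}$. This uses only conditional first and second moments plus results already in the paper. It identifies the limit with the $Z$ of Proposition~\ref{stationaritycir}, so the gamma law is inherited rather than recomputed. The same martingale, via Doob's inequality, also gives the approximation $X_t\approx k_n\alpha_n^tZ_n$ behind $ii)$--$iv)$, so one object does both jobs. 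The cost is reliance on the functional limit theorem.

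\textbf{Two small corrections.} First, your forward-window claim $\max_{k_nS\le t\le n}|k_n^{-1}\alpha_n^{-t}X_t-k_n^{-1}\tilde X_n|\xrightarrow{p}0$ holds only in the iterated sense $\lim_{S\to\infty}\limsup_{n}$. The offset $\mu_n\alpha_n^{-t}$ is of order $e^{-S}$ at $t=k_nS$, and the martingale fluctuation is $O_p(e^{-S/2})$. This is harmless because you send $S\to\infty$ anyway; by contrast, the paper's backward window $\sup_{r\le Mk_n}|R_{n,n-r}-R_{n,n}|$ vanishes for each fixed $M$. Second, the first normalization $k_n^{-3/2}\alpha_n^{-3n/2}$ in $v)$ should be $k_n^{-2}\alpha_n^{-3n/2}$, as in your next sentence.

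\textbf{Part $v)$.} You condition on an early $\sigma$-field with respect to which $Z_n$ is asymptotically measurable, then apply a conditional martingale CLT to the late increments. This spells out what the paper compresses into ``stable convergence''. It is the more careful justification for a triangular array, where the nested-filtration condition of the usual stable martingale CLT is not automatic. The mechanism is a conditional mixed-normal CLT, not the independence of disjoint innovation blocks used in \cite{phillips2007limit}.
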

%This result is to be compared with Theorem 2.6 of \cite{fei2018limit}. In their mildly explosive linear AR(1) model, the error term is i.i.d., and then obtain a constant limit for $k_n^{-1} \alpha_n^{-n} X_{t}$. In our case, since the error term $W_t$ is also non stationary (and hence leaving to ``more uncertainty", the limit we obtain in part $a)$ is random. 
\begin{proof}[Proof of Lemma \ref{lemma3}]
We first prove $(i)$ directly under the normalization $\alpha_n^n$. Fix
$\lambda\ge0$ and set
\[
u_{n,0}=\frac{\lambda}{k_n\alpha_n^n},
\qquad
u_{n,m+1}=a_n(u_{n,m}), \qquad m\ge0 .
\]
Iterating the affine Laplace transform gives
\[
\mathbb{E}\left[
\exp\left\{-\lambda\frac{X_n}{k_n\alpha_n^n}\right\}
\right]
=
\exp\left\{
-X_0u_{n,n}-\sum_{m=0}^{n-1}b_n(u_{n,m})
\right\}.
\]
Using
\[
a_n(u)=\alpha_n u-\frac{\beta_n}{2}u^2+o(u^2),
\qquad
b_n(u)=\mu_n u+o(u),
\]
uniformly for the arguments above, the usual discrete Riccati calculation gives
\[
X_0u_{n,n}\to0,
\qquad
\sum_{m=0}^{n-1}b_n(u_{n,m})
\to
\frac{2\mu}{\sigma^2}
\log\left(1+\frac{\sigma^2\lambda}{2}\right).
\]
Hence
\[
\mathbb{E}\left[
\exp\left\{-\lambda\frac{X_n}{k_n\alpha_n^n}\right\}
\right]
\to
\left(1+\frac{\sigma^2\lambda}{2}\right)^{-2\mu/\sigma^2},
\]
which is the Laplace transform of the random variable $Z$ in
Proposition~\ref{stationaritycir}. This proves $(i)$.

We next prove $(ii)$--$(iv)$. Let
\[
R_{n,t}:=\frac{X_t}{k_n\alpha_n^t}.
\]
For every fixed $M>0$, Doob's inequality, the martingale decomposition
\[
R_{n,t}-R_{n,t-1}
=
\frac{\mu_n+W_t}{k_n\alpha_n^t},
\]
and the affine moment recursion implies
\[
\sup_{0\le r\le Mk_n}
\left|R_{n,n-r}-R_{n,n}\right|
\xrightarrow{p}0.
\]
Thus, for $j=1,2,3$,
\[
j\,k_n^{-(j+1)}\alpha_n^{-jn}
\sum_{t=1}^{n}X_{t-1}^j
=
\frac{j}{k_n}\sum_{r=1}^n
\alpha_n^{-jr}R_{n,n-r}^j .
\]
Splitting the last sum at $\lfloor Mk_n\rfloor$, the terminal-window part satisfies
\[
\frac{j}{k_n}\sum_{r=1}^{\lfloor Mk_n\rfloor}
\alpha_n^{-jr}R_{n,n-r}^j
=
R_{n,n}^j
\frac{j}{k_n}\sum_{r=1}^{\lfloor Mk_n\rfloor}\alpha_n^{-jr}
+o_p(1)
\Rightarrow
Z^j(1-e^{-jM}).
\]
The remaining part is negligible as $M\to\infty$; indeed, by the same
moment recursion,
\[
\lim_{M\to\infty}\limsup_{n\to\infty}
\mathbb{P}\left(
\frac{j}{k_n}\sum_{r>\lfloor Mk_n\rfloor}
\alpha_n^{-jr}R_{n,n-r}^j>\eta
\right)=0,
\qquad \eta>0.
\]
Letting $M\to\infty$ gives
\[
j\,k_n^{-(j+1)}\alpha_n^{-jn}
\sum_{t=1}^{n}X_{t-1}^j
\overset{w}{\rightarrow}
Z^j,
\qquad j=1,2,3.
\]
The cases $j=1,2,3$ give $(ii)$, $(iii)$, and $(iv)$, respectively.

   It remains to prove $(v)$. This is the same martingale-CLT argument as
in the proof of part $a)$ of Theorem~\ref{cls}, except that the mildly
stationary LLN is replaced by the explosive terminal-window limits. Let us define the deterministically normalized martingale
array
\[
\zeta_{n,t}:=
\begin{bmatrix}
k_n^{-2}\alpha_n^{-3n/2}X_{t-1}W_t\\
k_n^{-1}\alpha_n^{-n/2}W_t
\end{bmatrix}.
\]
Its conditional covariance matrix is
\[
\sum_{t=1}^n
E[\zeta_{n,t}\zeta_{n,t}'\mid\mathcal F_{t-1}]
=
\begin{bmatrix}
\frac{1}{k_n^4\alpha_n^{3n}}\sum X_{t-1}^2(\beta_nX_{t-1}+\delta_n)
&
\frac{1}{k_n^3\alpha_n^{2n}}\sum X_{t-1}(\beta_nX_{t-1}+\delta_n)
\\
\frac{1}{k_n^3 \alpha_n^{2n}}\sum X_{t-1}(\beta_nX_{t-1}+\delta_n)
&
\frac{1}{k_n^2 \alpha_n^{n}}\sum(\beta_nX_{t-1}+\delta_n)
\end{bmatrix}.
\]
Using $(ii)$, $(iii)$, $(iv)$, and
$\beta_n\to\sigma^2$, this converges in distribution to
\[
\sigma^2
\begin{bmatrix}
Z^3/3 & Z^2/2\\
Z^2/2 & Z
\end{bmatrix}.
\]
The conditional Lindeberg condition follows exactly as in the proof of
part $a)$ of Theorem~\ref{cls}, using Assumption~\ref{AssAffineMomentBis}.
Hence the martingale central limit theorem gives the stable convergence
\[
\sum_{t=1}^n\zeta_{n,t}
\Rightarrow
\sigma
\begin{bmatrix}
Z^{3/2}G_1\\
Z^{1/2}G_2
\end{bmatrix},
\]
where $(G_1,G_2)$ follows $\mathcal{N}\left(0, \begin{bmatrix}
    \frac{1}{3}   & \frac{1}{2} \\
    \frac{1}{2} & 1
\end{bmatrix}\right)$, and is independent of $Z$.

Finally, by $(i)$,
\[
\frac{X_n}{k_n \alpha_n^{n}}\Rightarrow Z.
\]
Therefore, by Slutsky's theorem,
\[
\begin{bmatrix}
(k_nX_n^3)^{-1/2}\sum_{t=1}^nX_{t-1}W_t\\
(k_nX_n)^{-1/2}\sum_{t=1}^nW_t
\end{bmatrix}
=
\begin{bmatrix}
\left(\frac{X_n}{k_n\alpha_n^{n}}\right)^{-3/2} & 0\\
0 & \left(\frac{X_n}{k_n \alpha_n^{n}}\right)^{-1/2}
\end{bmatrix}
\sum_{t=1}^n\zeta_{n,t}
\Rightarrow
\sigma
\begin{bmatrix}
G_1\\
G_2
\end{bmatrix}.
\]
This proves $(v)$, and the limit is independent of $Z$.
 
    \end{proof}

%Finally, the convergences $i)-iv)$ are joint due to the joint convergence in Lemma \ref{lemma3}. 
Let us now prove part $b)$ of Theorem~\ref{cls}.  $(v)$ of Lemma~\ref{lemma3} says that $\sum_{t=1}^n X_{t-1}W_t \approx \sigma G_1 k_n^2 \alpha_n^{3n/2} Z^{\frac{3}{2}}$ and $\sum_{t=1}^n W_t \approx  \sigma G_2 k_n \alpha_n^{n/2} Z^{\frac{1}{2}}$. %where $(G_1, G_2) \sim \mathcal{N}(0, \Omega)$. 
Straightforward algebra gives:

%Let us now prove part $b)$ of Theorem \ref{cls}.  $iv)$ of Lemma 4 says that $\sum_{t=1}^n X_{t-1}W_t \approx \sigma G_1 k_n^2 \alpha_n^{3n/2} Z^{\frac{3}{2}}$ and $\sum_{t=1}^n W_t \approx  \sigma G_2 k_n \alpha_n^{n/2} Z^{\frac{1}{2}}$ where $(G_1, G_2) \sim \mathcal{N}(0, \Omega)$. Straightforward algebra gives:
\begin{align*}
     \begin{bmatrix} \hat{\alpha}_n-\alpha_n \\
\hat{\mu}_n- \mu_n
 \end{bmatrix}
& =\begin{bmatrix}
\sum_{t=1}^n X_{t-1}^2 & \sum_{t=1}^n X_{t-1} \\
\sum_{t=1}^n X_{t-1} & n
\end{bmatrix}^{-1} 
 \begin{bmatrix}
\sum_{t=1}^n X_{t-1}W_{t} \\
\sum_{t=1}^n W_{t}
\end{bmatrix} \\
&\approx \begin{bmatrix}
\frac{1}{2}k_n^{3}  \alpha_n^{2n}Z^2 & k_n^{2}  \alpha_n^{n}Z \\
k_n^{2}  \alpha_n^{n}Z& n
\end{bmatrix}^{-1} 
 \begin{bmatrix}
\sigma G_1 k_n^2 \alpha_n^{3n/2} Z^{\frac{3}{2}} \\
 \sigma G_2 k_n \alpha_n^{n/2} Z^{\frac{1}{2}}
\end{bmatrix}\\
&= \frac{\sigma}{(\frac{n}{2}-k_n)k_n^{3}  \alpha_n^{2n}Z^2} \begin{bmatrix}
    (n G_1 -k_n G_2) k_n^2 \alpha_n^{3n/2}Z^{\frac{3}{2}} \\
    -\alpha_n^{5n/2}Z^{5/2}k_n^4 (G_1-G_2/2)
\end{bmatrix}\\
&\approx \frac{\sigma}{\frac{n}{2}k_n^{3}  \alpha_n^{2n}Z^2} \begin{bmatrix}
    n G_1 k_n^2 \alpha_n^{3n/2} Z^{\frac{3}{2}} \\
  -\alpha_n^{5n/2}Z^{5/2}k_n^4 (G_1-G_2/2)
\end{bmatrix}.
\end{align*}

Thus $\hat{\alpha}_n-\alpha_n$ converges at the rate $k_n \alpha_n^{n/2}$, whereas since $\frac{k_n}{n}\alpha_n^{n/2}$ increases to infinity, $\hat{\mu}_n- \mu_n$ does not converge. 
%and on the other side, by \cite[Theorem 1.c]{alaya2012parameter}, as $t$ increases to infinity, we have:     $$    e^{- t/2} \int_0^t \sqrt{\Upsilon_s} \mathrm{d} B_s \overset{w}{\rightarrow} YZ    $$    Thus we have the weak convergence of $\frac{1}{k_n}e^{-\frac{n}{k_n}} \sum_{t=1} W_t$ towards $YZ$.
%On the other hand, by elementary analysis and Proposition 1, it is straightforward to check that as $t$ increases to infinity, we have:$$e^{-t}\int_0^t \Upsilon_s \mathrm{d}s \rightarrow Z, \qquad a.s.$$
     \subsection{Proof of Theorem~\ref{plugin}}
    \label{proofplugin}
 Using the technique of \cite[Theorem~1]{giraitis2006uniform}, we obtain
$$
%\hat{\alpha}_{n}-\alpha _{n}=O_p(( 1-\alpha_n)^{1/2} n^{-1/2})=o_p(( 1-\alpha_n)^{1/2}).
\hat{\alpha}_{n}-\alpha _{n}=O_p(( 1-\alpha_n)^{1/2} n^{-1/2}) = O_p( n^{-1/2}k_n^{-1/2})
$$
As a consequence, 
%$$
%1- \hat{\alpha}_n=1- \alpha_n+o_p(1- \alpha_n).
%$$
\begin{align*}
    {\frac{n}{1- \hat{\alpha}_{n}}} & =  \underbrace{\frac{n}{1- \alpha_{n}}}_{O_p(nk_n)}+\underbrace{\frac{n(\hat{\alpha}_{n}-\alpha_{n})}{(1- \hat{\alpha}_{n})(1- \alpha_n)}}_{\frac{O_p(n^{1/2}k_n^{-1/2})}{O_p(k_n^{-2})}}   = {\frac{n}{1- \alpha_{n}}}+ s.o., \\
    {n (1- \hat{\alpha}_{n})} &= \underbrace{n (1- \alpha_n)}_{ O_p(nk_n^{-1})}+\underbrace{n(\alpha_{n}-\hat{\alpha}_{n})}_{O_p( n^{1/2}k_n^{-1/2})} =n (1- \alpha_n)+ s.o.,
\end{align*}
where $s.o.$ denotes a term of smaller order in
probability. This means that we can substitute $k_n$ by $\hat{k}_n=\frac{1}{1-\hat{\alpha}_n}$ in Theorem 4. Theorem~\ref{plugin} follows.  
\subsection{Proof of Theorem \ref{bootstrap}}
\label{proofbootstrap}
%The proof of part $a)$ is the same as Theorem \cite{chen2024unified}. 
First, by Jensen's inequality, there is no loss of generality to assume that $\eta$ is small enough, that is, it satisfies $3+3\eta/2<p$. In the following, we assume this is satisfied. Let
\[
\bar\delta_t^b=\delta_t^b-1,
\qquad
\hat W_t=X_t-\hat\alpha_nX_{t-1}-\hat\mu_n .
\]
Assume the bootstrap weights are independent of the data and satisfy
\[
E[\delta_t^b]=1,\qquad \operatorname{Var}(\delta_t^b)=1,\qquad
E|\delta_t^b-1|^{2+\eta}<\infty
\]
for some $\eta>0$ such that $3+3\eta/2<p$.

By the OLS normal equations,
\[
\sum X_{t-1}\hat W_t=0,
\qquad
\sum \hat W_t=0.
\]
Hence
\[
F_n^b(\hat\theta_n^b-\hat\theta_n)=d_n^b,
\]
where
\[
F_n^b=
\begin{bmatrix}
\sum\delta_t^bX_{t-1}^2 & \sum\delta_t^bX_{t-1}\\
\sum\delta_t^bX_{t-1} & \sum\delta_t^b
\end{bmatrix},
\quad
d_n^b=
\begin{bmatrix}
\sum\bar\delta_t^bX_{t-1}\hat W_t\\
\sum\bar\delta_t^b\hat W_t
\end{bmatrix}.
\]

First consider the normalized bootstrap design matrix. Its random-weight
part is controlled by truncation. For example,
\[
\frac1n\sum\bar\delta_t^bY_{n,t-1}^2
=
\frac1n\sum\bar\delta_t^bY_{n,t-1}^2
\mathbf 1_{\{Y_{n,t-1}\le R\}}
+
\frac1n\sum\bar\delta_t^bY_{n,t-1}^2
\mathbf 1_{\{Y_{n,t-1}>R\}} .
\]
Conditionally on the data, the variance of the first term is bounded by
$CR^4/n$. The second term is bounded in conditional $L^1$ by
\[
C\frac1n\sum Y_{n,t-1}^2\mathbf 1_{\{Y_{n,t-1}>R\}}
\le
\frac{C}{R}\frac1n\sum
Y_{n,t-1}^3\mathbf 1_{\{Y_{n,t-1}>R\}},
\]
which is negligible by Lemma~\ref{lem:ms-lln}. The other entries are easier.
Therefore
\[
\tilde F_n^b\overset{p^*}{\longrightarrow}\Omega
\]
in probability.

Now define the normalized bootstrap score
\[
\tilde d_n^b=
\begin{bmatrix}
n^{-1/2}k_n^{-3/2}\sum\bar\delta_t^bX_{t-1}\hat W_t\\
(nk_n)^{-1/2}\sum\bar\delta_t^b\hat W_t
\end{bmatrix}.
\]
Conditionally on the data, this is a sum of independent centered random
vectors. Its conditional covariance is
\[
V_n^*=
\begin{bmatrix}
\frac1{nk_n^3}\sum X_{t-1}^2\hat W_t^2
&
\frac1{nk_n^2}\sum X_{t-1}\hat W_t^2\\
\frac1{nk_n^2}\sum X_{t-1}\hat W_t^2
&
\frac1{nk_n}\sum \hat W_t^2
\end{bmatrix}.
\]

We show that \(V_n^*\to_p\Sigma\). For \(r=0,1,2\),
\[
\frac1{nk_n^{r+1}}\sum X_{t-1}^r
\{W_t^2-E[W_t^2\mid\mathcal F_{t-1}]\}
\overset{p}{\longrightarrow}0.
\]
Indeed, choose \(q>1\) such that \(q\le2\), \(2q\le p\), and
\((r+1)q\le p\). The affine moment recursion gives uniformly bounded
\(q\)-moments for the scaled martingale differences, and the von Bahr--Esseen inequality yields
\[
E\left|
\frac1n\sum k_n^{-(r+1)}X_{t-1}^r
\{W_t^2-E[W_t^2\mid\mathcal F_{t-1}]\}
\right|^q
=O(n^{1-q})\to0.
\]
Since
\[
E[W_t^2\mid\mathcal F_{t-1}]=\beta_nX_{t-1}+\delta_n,
\]
Lemma~\ref{lem:ms-lln} gives
\[
\frac1{nk_n^{r+1}}\sum X_{t-1}^rW_t^2
\overset{p}{\longrightarrow}
\sigma^2E[\Upsilon_\infty^{r+1}],
\qquad r=0,1,2.
\]

It remains to replace \(W_t\) by \(\hat W_t\). Put
\[
R_t=\hat W_t-W_t
=-(\hat\alpha_n-\alpha_n)X_{t-1}-(\hat\mu_n-\mu_n).
\]
By Theorem~\ref{cls}$(a)$,
\[
\hat\alpha_n-\alpha_n=O_p((nk_n)^{-1/2}),
\qquad
\hat\mu_n-\mu_n=O_p((k_n/n)^{1/2}).
\]
The only apparently fourth-order term is
\[
(\hat\alpha_n-\alpha_n)^2
\frac1{nk_n^3}\sum X_{t-1}^4
=
O_p(1)\frac1{n^2}\sum Y_{n,t-1}^4 .
\]
But
\[
\frac1{n^2}\sum Y_{n,t-1}^4
\le
\left(\frac{\max_{t\le n}Y_{n,t-1}}{n}\right)
\frac1n\sum Y_{n,t-1}^3
=o_p(1),
\]
because by Markov's inequality \(\max_{t\le n}Y_{n,t-1}/n=o_p(1)\), and
Lemma~\ref{lem:ms-lln} gives \(n^{-1}\sum Y_{n,t-1}^3=O_p(1)\).
Thus, for \(r=0,1,2\),
\[
\frac1{nk_n^{r+1}}\sum X_{t-1}^rR_t^2=o_p(1).
\]
By Cauchy-Schwarz,
\[
\frac1{nk_n^{r+1}}\sum X_{t-1}^r(\hat W_t^2-W_t^2)=o_p(1).
\]
Hence \(V_n^*\to_p\Sigma\).

Finally, the conditional Lindeberg condition follows from
\(E|\bar\delta_t^b|^{2+\eta}<\infty\) and \(3+3\eta/2<p\):
\[
\sum_t E^*\!\left[
\|\bar\delta_t^b z_{n,t}\|^2
\mathbf 1_{\{\|\bar\delta_t^b z_{n,t}\|>\varepsilon\}}
\right]
\le
C\varepsilon^{-\eta}\sum_t\|z_{n,t}\|^{2+\eta}
=o_p(1),
\]
where
\[
z_{n,t}=
\begin{bmatrix}
n^{-1/2}k_n^{-3/2}X_{t-1}\hat W_t\\
(nk_n)^{-1/2}\hat W_t
\end{bmatrix}.
\]
Therefore, conditionally on the data,
\[
\tilde d_n^b\overset{w^*}{\longrightarrow}\mathcal N(0,\Sigma)
\]
in probability. Combining this with \(\tilde F_n^b\to\Omega\) gives
\[
\begin{bmatrix}
\sqrt{nk_n}(\hat\alpha_n^b-\hat\alpha_n)\\
\sqrt{n/k_n}(\hat\mu_n^b-\hat\mu_n)
\end{bmatrix}
\overset{w^*}{\longrightarrow}
\mathcal N(0,\Omega^{-1}\Sigma\Omega^{-1}).
\]

%and $$e^{-3t}\int_0^t \Upsilon^3_s \mathrm{d}s \rightarrow Z^3 $$
 
 \subsection{Proof of Proposition \ref{convergencebeta}}
Using the same technique as in the proof of Theorem~\ref{cls}, we have:
     $$
 (nk_n)^{-1}    \sum_{t=1}^n W_t^2  \overset{w}{\rightarrow} \sigma^2 \mathbb{E}[\Upsilon_\infty]
     $$
     On the other hand, by the proof of Lemma~\ref{lem:ms-lln}, we have:
     $$
     (nk_n)^{-1}    \sum_{t=1}^n X_{t-1}   \overset{w}{\rightarrow} \mathbb{E}[\Upsilon_\infty]
     $$
Then we remark that:
$$
\hat{W}_t=W_t+ (\alpha_n-\hat{\alpha}_n)X_{t-1}+(\mu_n-\hat{\mu}_n)
$$
Thus $$\sum_{t=1}^n \hat{W}_t^2 =\sum_{t=1}^n W_t^2 + \sum_{t=1}^n [(\alpha_n-\hat{\alpha}_n)X_{t-1}+(\mu_n-\hat{\mu}_n)]^2+2 \sum_{t=1}^n W_t [(\alpha_n-\hat{\alpha}_n)X_{t-1}+(\mu_n-\hat{\mu}_n)].$$

It is easily checked that the second term is negligible with respect to the denominator $\sum_{t=1}^n X_{t-1}$, and by Cauchy-Schwarz, the third, cross term, is also negligible. Finally, taking the ratio of these two equations leads to $ \hat{\sigma}^2= \frac{\sum_{t=1}^n \hat{W}_t^2}{ \sum_{t=1}^n X_{t-1}}  \overset{w}{\rightarrow} \sigma^2.
$

\subsection{Proof of Proposition \ref{transient}}
\label{prooftransient}
This proposition is based on the following lemma, which is the analog of Lemma~\ref{lem:ms-lln}. %Lemma 2
\begin{lemma}
\label{lastlemma}
Under the assumption $\mu> \frac{\sigma^2}{2}$, we have the joint weak convergence of the random variables:
 \begin{itemize}
\item $i)$. $\frac{k_n}{n} \sum_{t=1}^n  \frac{1}{ X_{t-1}+1}    \overset{w}{\rightarrow} \mathbb{E}[1/\Upsilon_\infty] $ 
\item $ii)$. $\frac{1}{nk_n} \sum_{t=1}^n  \frac{ X^2_{t-1}}{ X_{t-1}+1 }  \overset{w}{\rightarrow} \mathbb{E}[\Upsilon_\infty]$ 
 \item $iii)$. $ \frac{1}{n}\sum_{t=1}^n  \frac{ X_{t-1}}{ X_{t-1}+1} \overset{w}{\rightarrow} 1    $ 
\item $iv)$. $   \begin{bmatrix}
n^{-1/2}k_{n}^{-1/2}\sum_{t=1}^{n}\frac{X_{t-1}W_{t}}{ X_{t-1}+1}  \\
(n/k_{n})^{-1/2}\sum_{t=1}^{n}\frac{W_{t}}{ X_{t-1}+1} 
\end{bmatrix}
\overset{w}{\rightarrow } \mathcal{N}(0, \sigma^2 \Omega_1),$ where $\Omega_1=\begin{bmatrix}
    \mathbb{E}[\Upsilon_\infty] & 1 \\
    1 &  \mathbb{E}[1/\Upsilon_\infty]
\end{bmatrix}$.  
\end{itemize}
\end{lemma}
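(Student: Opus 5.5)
The plan follows the template of Lemma~\ref{lem:ms-lln} and Lemma~\ref{mildlystationary}, replacing the polynomial test functions $y^j$ by $y\mapsto 1/y$, $y^2/y=y$ and $y/y=1$ evaluated at $Y_{n,t-1}=X_{t-1}/k_n$. Write
\[
\frac{k_n}{X_{t-1}+1}=\frac{1}{Y_{n,t-1}+1/k_n},\qquad \frac{X^2_{t-1}}{k_n(X_{t-1}+1)}=\frac{Y^2_{n,t-1}}{Y_{n,t-1}+1/k_n},\qquad \frac{X_{t-1}}{X_{t-1}+1}=\frac{Y_{n,t-1}}{Y_{n,t-1}+1/k_n}.
\]
Items $ii)$ and $iii)$ are easy. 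In $ii)$, the summand differs from $Y_{n,t-1}$ by at most $1/k_n$, so Lemma~\ref{lem:ms-lln} with $j=1$ applies. In $iii)$, the summand is $1-1/(X_{t-1}+1)$, so $iii)$ follows from $i)$, since $\frac1n\sum 1/(X_{t-1}+1)=O_p(1/k_n)$.

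The core step is $i)$, which is a law of large numbers for the unbounded test function $h_n(y)=1/(y+1/k_n)$ near $0$. I would first truncate from below. Apply (A.4) of Lemma~\ref{lem:ms-lln} to the bounded Lipschitz function $y\mapsto 1/(y\vee\varepsilon)$, and use (A.2) to get $\frac1n\sum 1/(Y_{n,t-1}\vee\varepsilon)\to_p \mathbb{E}[1/(\Upsilon_\infty\vee\varepsilon)]$. Since the gamma invariant law has shape $2\mu/\sigma^2>1$, $\mathbb{E}[1/\Upsilon_\infty]=(\mu-\sigma^2/2)^{-1}$ is finite, and monotone convergence lets $\varepsilon\to0$ in the limit.

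The remaining task, which I expect to be the main obstacle, is the uniform small-value control
\[
\lim_{\varepsilon\to0}\limsup_n \mathbb{E}_{\pi_n}\Big[\frac{1}{Y_{n,0}+1/k_n}\mathbf 1_{\{Y_{n,0}<\varepsilon\}}\Big]=0,
\]
together with the same bound along the transient started at $X_0=o(k_n)$, whose contribution is only $O(k_n/n)$ blocks. To prove it I would use the affine Laplace transform. Since $\frac{1}{x+1}=\int_0^\infty e^{-u(x+1)}du$, one has
\[
\mathbb{E}_{\pi_n}\Big[\frac{k_n}{X+1}\Big]=k_n\int_0^\infty e^{-u}\,\mathbb{E}_{\pi_n}[e^{-uX}]\,du .
\]
The stationary Laplace transform equals $\exp\{-\sum_{m\ge0} b_n(u_m)\}$ with $u_{m+1}=a_n(u_m)$. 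The discrete Riccati calculation already used in Lemma~\ref{lemma3}$(i)$ shows that for $u\le c$ this behaves like $(1+\sigma^2 k_n u/2)^{-2\mu/\sigma^2}$ up to $(1+o(1))$ factors, with $\mu_n,\beta_n$ in place of $\mu,\sigma^2$. The affine structure also gives a uniform geometric bound on $\mathbb{E}_{\pi_n}[e^{-uX}]$ for $u\ge c$. With $2\mu/\sigma^2>1$ the integral is dominated, which yields uniform integrability of $1/(Y_{n,0}+1/k_n)$ and hence the displayed bound. If the exponent margin is tight, I would work with $\mu_n,\beta_n$ and use $2\mu_n/\beta_n>1+\epsilon$ for large $n$.

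For $iv)$, I would repeat the martingale CLT argument of Lemma~\ref{mildlystationary}$(b)$ with
\[
\xi_{n,t}=\Big((nk_n)^{-1/2}\tfrac{X_{t-1}W_t}{X_{t-1}+1},\ (k_n/n)^{1/2}\tfrac{W_t}{X_{t-1}+1}\Big).
\]
Its conditional covariance has entries $\frac{1}{nk_n}\sum \frac{X_{t-1}^2(\beta_nX_{t-1}+\delta_n)}{(X_{t-1}+1)^2}$, $\frac1n\sum\frac{X_{t-1}(\beta_nX_{t-1}+\delta_n)}{(X_{t-1}+1)^2}$ and $\frac{k_n}{n}\sum\frac{\beta_nX_{t-1}+\delta_n}{(X_{t-1}+1)^2}$. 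These converge to $\sigma^2\mathbb{E}[\Upsilon_\infty]$, $\sigma^2$ and $\sigma^2\mathbb{E}[1/\Upsilon_\infty]$ by $i)$–$iii)$ and the argument above, the $\delta_n$ pieces being $O_p(1/k_n)$. This gives the covariance $\sigma^2\Omega_1$.

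For the conditional Lindeberg condition, use $\mathbb{E}[|W_t|^p\mid\mathcal F_{t-1}]\le C(1+X_{t-1}^{p/2})$. The first component is bounded by $|W_t|/\sqrt{nk_n}$. For the second, compute a $(2+\epsilon)$-th moment with $\epsilon$ small, which gives $n^{-\epsilon/2}\frac1n\sum (Y_{n,t-1}+1/k_n)^{-1-\epsilon/2}$. I would control this average either by the same Laplace-transform bound with exponent slightly above $1$, which is allowed since $2\mu/\sigma^2>1$, or by truncation at $\{Y_{n,t-1}<\varepsilon\}$ as in the proof of Lemma~\ref{mildlystationary}. Joint convergence of $i)$–$iv)$ then follows because $i)$–$iii)$ have deterministic limits.
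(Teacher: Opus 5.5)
Your treatment of the stationary part is correct, and it is a genuine rigorization of the paper's argument. The paper only replaces $\frac{1}{X_{t-1}+1}$ by $\frac{1}{k_n}\big(\frac{X_{t-1}}{k_n}\big)^{-1}$, appeals to the scaling limit and the ergodic theorem for $\frac{k_n}{n}\int_0^{n/k_n}\Upsilon_s^{-1}\,ds$, uses the inverse-gamma mean to see where $2\mu>\sigma^2$ enters, and declares $ii)$--$iv)$ routine. Your lower truncation fed into (A.4)--(A.2), together with uniform integrability of $1/(Y_{n,0}+1/k_n)$ under $\pi_n$ from the stationary Laplace transform, supplies exactly what weak convergence alone cannot give. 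The genuine gap is the transient. Your claim that the same small-value bound holds along the path started at $X_0=o(k_n)$, contributing only $O(k_n/n)$, is false.

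The reason is that $k_n/(X_{t-1}+1)$ is not uniformly integrable during the first block. Your own formula with $X_0=0$ gives $\mathbb{E}[e^{-uX_t}]=\exp\{-\sum_{m<t}b_n(u_m)\}\approx(1+\sigma^2tu/2)^{-2\mu/\sigma^2}$ for $t\ll k_n$, hence
\[
\mathbb{E}\Big[\frac{k_n}{X_t+1}\Big]\approx\frac{k_n}{t}\int_0^\infty\Big(1+\frac{\sigma^2v}{2}\Big)^{-2\mu/\sigma^2}dv=\frac{k_n}{(\mu-\sigma^2/2)\,t}.
\]
So the first block adds roughly $\frac{k_n\log k_n}{(\mu-\sigma^2/2)\,n}$ to the average in $i)$, and in general a term of order $\frac{k_n}{n}\log\frac{k_n}{1+X_0}$. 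This vanishes for $k_n=n^\tau$, but not under $k_n/n\to0$ alone. With $X_0=0$ and $k_n=n/\log n$ it is of order one. Since the first-block sum is an ergodic average in logarithmic time, it concentrates, so $i)$ itself fails.

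You therefore need an extra hypothesis, for example $\frac{k_n}{n}\log\frac{k_n}{1+X_0}\to0$, or $X_0$ of order $k_n$, or a stationary start. You then need an explicit transient bound via $\exp\{-X_0u_t-\sum_{m<t}b_n(u_m)\}$. The same term also inflates the $(2,2)$ entry of the conditional covariance in $iv)$.

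The paper's heuristic hides the same problem. For $\Upsilon_0=0$ and $2\mu>\sigma^2$ one has $\Upsilon_r\asymp r$ near $0$, so $\int_0^s\Upsilon_r^{-1}\,dr=\infty$ almost surely.

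A smaller point concerns $iv)$. The remainder $\frac{k_n}{n}\sum_t(\delta_n-\beta_n)(X_{t-1}+1)^{-2}$ in the $(2,2)$ entry is in general not $O_p(1/k_n)$; it is of order $k_n^{1-2\mu/\sigma^2}$ when $2\mu/\sigma^2<2$. It is $o_p(1)$ only through your small-value uniform integrability.
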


\begin{proof}[Proof of Lemma~\ref{lastlemma}]
To investigate the term $S_n:=\sum_{t=1}^n  \frac{1}{ X_{t-1}+1}$ in convergence $i)$, we write:
$$\sum_{t=1}^n\frac{1}{ X_{t-1}+1}=\sum_{t=1}^n\frac{1}{k_n} \frac{1}{\frac{1}{k_n}+\frac{X_{t-1}}{k_n}}
%$\sum_{t=1}^n\frac{1}{n} \frac{1}{\frac{1}{n}+\frac{X_{t-1}}{n}}
\approx \sum_{t=1}^n\frac{1}{k_n} \frac{1}{\frac{X_{t-1}}{k_n}}.$$ 
Since $X_{\lfloor k_ns \rfloor}/k_n$ converges weakly, we have $$\sum_{t=1}^n\frac{1}{n} \frac{1}{\frac{X_{t-1}}{k_n}} \approx \frac{k_n}{n}\int_0^{n/k_n} \frac{1}{\Upsilon_s}\mathrm{d}s \rightarrow  \mathbb{E}[1/\Upsilon_\infty] ,$$ provided that the latter limit is finite. Since the diffusion $(\Upsilon_s)$ has a gamma invariant distribution with shape parameter $2\mu/\sigma^2$, the invariant distribution of $\frac{1}{\Upsilon_s}$ is inverse gamma with the same shape parameter, which has an infinite mean if $2\mu/\sigma^2<1$. Thus we get convergence $i).$ Convergences $ii)$ and $iii)$ are simple consequences of $i)$. Convergence $iv)$ can be proved in a similar way as convergence $b)$ in Lemma 3. 
\end{proof}

Let us now prove Proposition \ref{transient}. We introduce the rescaled square matrix
\begin{align*}\tilde{H}_n:&= \begin{bmatrix}
(nk_n)^{-1/2}   & 0 \\
0 & (n/k_n)^{-1/2}
\end{bmatrix} H_n \begin{bmatrix}
(nk_n)^{-1/2}   & 0 \\
0 & (n/k_n)^{-1/2}
\end{bmatrix}\\
&=\begin{bmatrix}
    (nk_n)^{-1} \sum_{t=1}^n \frac{ X^2_{t-1}}{ X_{t-1}+1 }   & {n^{-1 }} \sum_{t=1}^n  \frac{ X_{t-1}}{ X_{t-1}+1 } \\
  {n^{-1 }} \sum_{t=1}^n  \frac{ X_{t-1}}{ X_{t-1}+1 }   & \frac{k_n}{n} \sum_{t=1}^n  \frac{1}{ X_{t-1}+1}
\end{bmatrix} \overset{p}{\rightarrow }   \Omega_1,  
\end{align*}
and
$$\tilde{v}_n:=\begin{bmatrix}
(n k_{n})^{-1/2} & 0 \\
0 & (n/k_n)^{-1/2}
\end{bmatrix} v_n=\begin{bmatrix}
n^{-1/2}k_{n}^{-1/2}\sum_{t=1}^{n}\frac{X_{t-1}W_{t}}{ X_{t-1}+1}  \\
(n/k_{n})^{-1/2}\sum_{t=1}^{n}\frac{W_{t}}{ X_{t-1}+1} 
\end{bmatrix} \overset{w}{\rightarrow } \mathcal{N}(0, \sigma^2 \Omega_1).
$$
Thus:
\begin{align*}
    H_n^{-1}v_n&= \begin{bmatrix}
(nk_n)^{-1/2}   & 0 \\
0 & (n/k_n)^{-1/2}
\end{bmatrix} \tilde{H}_n^{-1} \begin{bmatrix}
(nk_n)^{-1/2}   & 0 \\
0 & (n/k_n)^{-1/2}
\end{bmatrix}
\begin{bmatrix}
(n k_{n})^{1/2} & 0 \\
0 & (n/k_n)^{1/2}
\end{bmatrix}
\tilde{v}_n \\
&= \begin{bmatrix}
(nk_n)^{-1/2} & 0 \\
0 & (n/k_n)^{-1/2}
\end{bmatrix} \tilde{H}_n^{-1} \tilde{v}_n.
\end{align*}
Hence the distribution of 
 $ 
\begin{bmatrix}
(nk_n)^{1/2} & 0 \\
0 & (n/k_n)^{1/2}
\end{bmatrix} \tilde{H}_n^{-1} \tilde{v}_n=
\begin{bmatrix}
    (nk_n)^{1/2} (\hat{\hat{\alpha}}_n-\alpha_n) \\
    (n/k_n)^{1/2} (\hat{\hat{\mu}}_n-\mu_n)
\end{bmatrix} 
$  converges weakly to $\mathcal{N}(0,\sigma^2\Omega_1^{-1}\Omega_1\Omega_1^{-1})=\mathcal{N}(0,\sigma^2 \Omega_1^{-1})$ by CMT. Proposition \ref{transient} follows. 

\section{Benchmark comparison with \cite{peng2024note}}

As a benchmark comparison, we apply the \(p(\alpha)\) calculation to break-and-enter dwelling crime count data used by \citet{peng2024note}. The data are monthly counts for three municipalities (Inner West, Blacktown, and Sydney) from January 1995 to September 2023. \cite{peng2024note} consider the mildly stationary INAR(1) specification for these data, corresponding to the limiting case of our framework with $\sigma^2=0$. Using our estimator $\hat{\sigma}^2$, we estimate the value of $\sigma^2$ to be 6.5, 4.3 and 7.15 for these three cities, respectively. %these dataset seem to feature conditional over-dispersion. 
Since the INAR model implies conditional under-dispersion, whereas our framework allows for both under- and over-dispersion, an estimate of $\hat{\sigma}^2$ that is substantially larger than 1 suggests potential over-dispersion of the data, thus favoring our affine framework. The OLS estimates are \(\hat\alpha_n=0.962,0.931,0.924\), respectively,
which are close to those reported by \citet{peng2024note}. The resulting $\hat{k}_n$ is 26, 14 and 13, respectively, which is much smaller than $n=344$ for each series, making the mildly stationary framework potentially appropriate. Figure~\ref{fig:peng-crime-pvalue}
plots the plug-in and bootstrap-normal \(p(\alpha)\) functions.

\begin{figure}[H]
    \centering
    \includegraphics[width=0.98\textwidth]{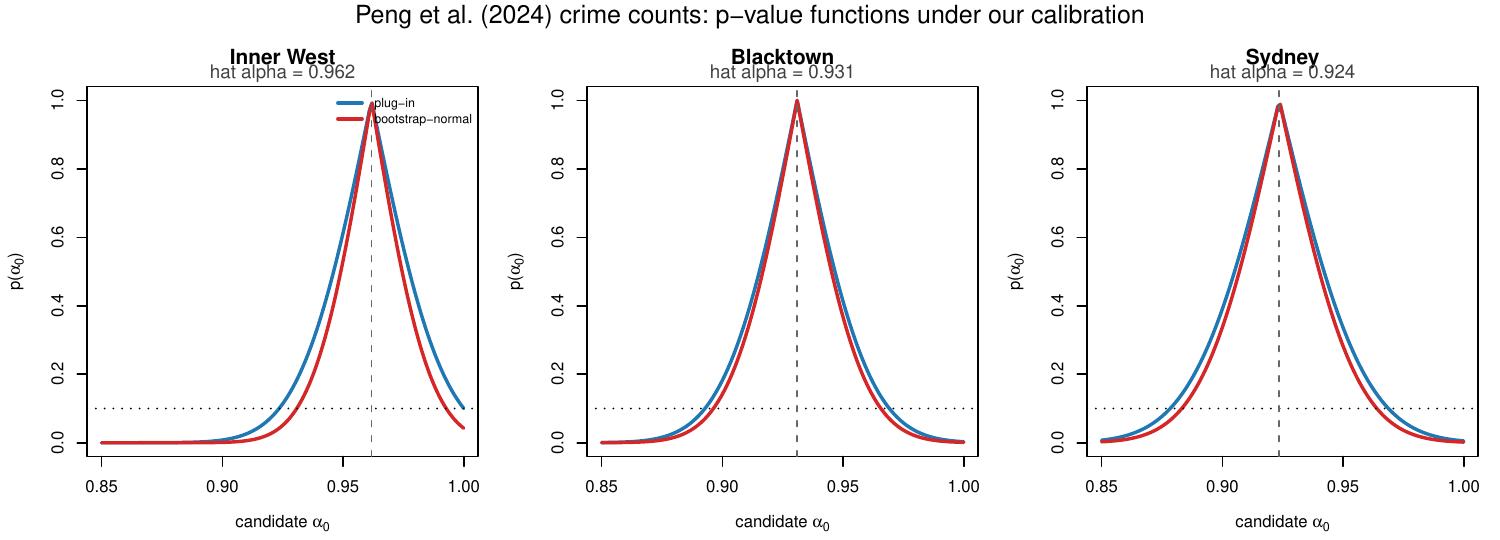}
    \caption{Plug-in and bootstrap-normal \(p(\alpha)\) functions for the
    break-and-enter dwelling counts used by \citet{peng2024note}.  The
    horizontal dotted line is the 10\% significance level, and the vertical
    dashed line marks \(\hat\alpha\).}
    \label{fig:peng-crime-pvalue}
\end{figure}

The resulting non-rejection regions are broadly comparable to those obtained
for the count applications in Section~\ref{sec:7empirical}.  Unlike our main
applications, however, the upper boundary \(\alpha=1\) is rejected for
Blacktown and Sydney under both plug-in and bootstrap-normal inference, and is
borderline for Inner West.  This illustrates that non-rejection near the upper
grid boundary is not a mechanical feature of the \(p(\alpha)\) construction,
but reflects the uncertainty and persistence structure of the particular
sample.

\end{document}